% !TEX TS-program = pdflatex
% !TEX encoding = UTF-8 Unicode

% This is a simple template for a LaTeX document using the "article" class.
% See "book", "report", "letter" for other types of document.

\documentclass[12pt]{article} % use larger type; default would be 10pt
\usepackage{setspace}
\onehalfspacing

\usepackage[utf8]{inputenc} % set input encoding (not needed with XeLaTeX)

\usepackage{subfigure}
\setcounter{secnumdepth}{5}

%\graphicspath{{figures/}} % Graphics will be here

%%% Examples of Article customizations
% These packages are optional, depending whether you want the features they provide.
% See the LaTeX Companion or other references for full information.

%%% PAGE DIMENSIONS
%\usepackage{geometry} % to change the page dimensions
%\geometry{a4paper} % or letterpaper (US) or a5paper or....
\usepackage[margin=1in]{geometry}
\geometry{letterpaper} 
%\geometry{margins=2in} % for example, change the margins to 2 inches all round
% \geometry{landscape} % set up the page for landscape
%   read geometry.pdf for detailed page layout information

\usepackage{graphicx} % support the \includegraphics command and options
\usepackage{epstopdf}

% \usepackage[parfill]{parskip} % Activate to begin paragraphs with an empty line rather than an indent

%%% PACKAGES
\usepackage{booktabs} % for much better looking tables
\usepackage{array} % for better arrays (eg matrices) in maths
\usepackage{paralist} % very flexible & customisable lists (eg. enumerate/itemize, etc.)
\usepackage{verbatim} % adds environment for commenting out blocks of text & for better verbatim
\usepackage{subfig} % make it possible to include more than one captioned figure/table in a single float
% These packages are all incorporated in the memoir class to one degree or another...

%%% HEADERS & FOOTERS
\usepackage{fancyhdr} % This should be set AFTER setting up the page geometry
\pagestyle{fancy} % options: empty , plain , fancy
 % customise the layout...
\lhead{}\chead{}\rhead{}
\lfoot{}\cfoot{\thepage}\rfoot{}

%%% SECTION TITLE APPEARANCE
\usepackage{sectsty}
\allsectionsfont{\sffamily\mdseries\upshape} % (See the fntguide.pdf for font help)
% (This matches ConTeXt defaults)

%%% ToC (table of contents) APPEARANCE
\usepackage[nottoc,notlof,notlot]{tocbibind} % Put the bibliography in the ToC
\usepackage[titles,subfigure]{tocloft} % Alter the style of the Table of Contents

 % No bold!
\usepackage{amsmath, amsthm, amssymb}
%%% END Article customizations

\usepackage{multirow}
\usepackage{algorithm}
\usepackage{algorithmic}

\usepackage{amsthm}
\newtheorem{thm}{Theorem}[section]

\newtheorem{cor}{Corollary}[section]
\newtheorem{dfn}{Definition}[section]
\newtheorem{prop}{Proposition}[section]

\usepackage{tikz, pgfplots}
\pgfplotsset{compat=1.6} 
\usetikzlibrary{calc,positioning,shapes,fit,arrows,shadows}
\tikzstyle{line} = [ draw, -latex']  
\usetikzlibrary{shapes,arrows}
%\usepgfplotslibrary{units}
\usepackage{url}
\usepackage{color}
\usepackage{rotating}
\usepackage{pdflscape}
\allowdisplaybreaks 
\graphicspath{{figures/}} % Graphics will be here

\DeclareMathOperator{\atan2}{atan2}
\DeclareMathOperator{\argmin}{argmin}

%\usepackage{pgfplotstable} % For \pgfplotstableread
%
%\pgfplotstableread{
%0.01    1.00
%0.02    2.00
%0.03    3.00
%0.04    4.00
%0.05    5.00
%}\datatable

%%% The "real" document content comes below...

%\numberwithin{equation}{section}
%\title{Comparison of LP and SOCP Based Spatial Branch-and-Bound Methods for the Optimal Power Flow Problem} 
\title{Matrix Minor Reformulation and SOCP-based Spatial Branch-and-Cut Method for the AC Optimal Power Flow Problem} 
%\title{SOCP Based Spatial Branch-and-Cut Method for the Optimal Power Flow Problem with Bound Tightening} 
\author{Burak Kocuk, Santanu S. Dey, X. Andy Sun} 
%\date{} % Activate to display a given date or no date (if empty),
         % otherwise the current date is printed

\begin{document}
\maketitle

\begin{abstract}
Alternating current optimal power flow (AC OPF) is one of the most fundamental optimization problems in electrical power systems. It can be formulated as a semidefinite program (SDP) with rank constraints. Solving AC OPF, that is, obtaining near optimal primal solutions as well as high quality dual bounds for this non-convex program, presents a major computational challenge to today's power industry for the real-time operation of large-scale power grids. In this paper, we propose a new technique for reformulation of the rank constraints using both principal and non-principal $2$-by-$2$ minors of the involved Hermitian matrix variable and characterize all such minors into three types. We show the equivalence of these minor constraints to the physical constraints of voltage angle differences summing to zero over three- and four-cycles in the power network. We study second-order conic programming (SOCP) relaxations of this minor reformulation and propose strong cutting planes, convex envelopes, and bound tightening techniques to strengthen the resulting SOCP relaxations. We then propose an SOCP-based spatial branch-and-cut method to obtain the global optimum of AC OPF. Extensive computational experiments show that the proposed algorithm significantly outperforms the state-of-the-art SDP-based OPF solver and on a simple personal computer is able to obtain on average a $0.71\%$ optimality gap in no more than $720$ seconds for the most challenging power system instances in the literature.   

\end{abstract}

\section{Introduction}
\subsection{Approaches to solving alternating current optimal power flow problem}
Alternating current optimal power flow (AC OPF) is one of the most fundamental optimization problems in electrical power system operation. Since its introduction in 1962 \cite{Carpentier}, developing efficient algorithms for solving AC OPF has remained an active field of research. The main challenge in solving AC OPF in order to obtain a near-optimal primal solution and a high quality dual bound lies in the non-convex nature of the AC OPF model, the very large scale of the resulting optimization problem for real-world power grids, and the very short amount of solution time required by the real-time dispatch of the generation and demand resources in the power grid. 

Literature on the AC OPF problem can be roughly categorized into three groups: local methods, relaxation approaches, and global optimization methods (e.g. see recent surveys in \cite{Momoh11999, Momoh21999, Frank12012, Frank22012, FERC2012}). In the local methods, the aim is to find a local optimal solution or a stationary point of the OPF problem. Typically, these methods are based on the Newton-Raphson algorithm or the interior point methods (IPM) \cite{Wu1994, Torres1998, Jabr2002, Zimmerman2007, Matpower}, and are effective in locating good feasible solutions only under normal operating conditions (e.g. when the power system is not stressed by high level of load and/or tight transmission constraints).  
%For instance, a commonly-used package is MATPOWER \cite{}. 
However, since the OPF problem is non-convex, such local methods may get stuck at local optimal or stationary solutions \cite{Bukhsh,kocuk2014}. Also, local methods are sensitive to the initial point and may fail to produce solutions when a warm starting point is not available \cite{kocuk2014}.

%Optimal Power Flow (OPF) was first introduced in the 1960s \cite{Carpentier} and still remains to be a fundamental optimization problem in . There are two challenges in the solution of OPF{.} First, it is an operational level problem solved every few minutes, hence the computational budget is limited. Second, it is a nonconvex optimization problem on a large-scale power network of thousands of buses, generators, and loads. The importance of the problem and the aforementioned difficulties have produced a rich literature, see e.g.. Roughly speaking, we can categorize the previous work into three categories.
%
%
%The first {category} of algorithms finds local optimal solutions or stationary points of the OPF problem. The academic literature has focused on improving nonlinear optimization methods such as the interior point methods (IPM) to compute local optimal solutions, see e.g. . A well-known implementation of IPM tailored for the OPF problem is MATPOWER . Although these local methods are effective in solving IEEE test instances, they do not offer any quantification of the quality of the solution. {For instance, in, there are several examples which have multiple local optima and it has been shown that local solvers tend to converge to the solution which is closest to the initial guess.}
%
%
%
% 

Relaxation approaches for solving AC OPF are mainly based on the semidefinite programming (SDP) relaxation techniques  \cite{Bai2008, Bai2009, Lavaei12}. It recently become a popular approach as researchers found that the SDP relaxations of the standard IEEE test instances up to 300 buses all obtained globally optimal solutions \cite{Lavaei12}. 
%It has been shown that  using semidefinite programming (SDP) relaxations  \cite{Bai2008, Bai2009, Lavaei12} can be used to solve OPF under some conditions. 
However, it was soon realized that the requirements to guarantee exactness of the SDP relaxation are typically quite restrictive (e.g. resistive networks with no reactive loads \cite{Lavaei12}, networks with a large number of virtual phase shifters  \cite{sojoudi2012physics}, radial networks with no generation lower bounds \cite{Zhang12, bose2011optimal, bose2012quadratically}). Furthermore, when the relaxation is not exact, it is a challenge to recover a feasible primal solution. However, since the dual bounds obtained by SDP relaxations are typically strong, this approach proves to be a valuable tool for evaluating the quality of the primal feasible solutions obtained by other methods, e.g. a local method. Last but not the least, solving SDP is not very scalable with the existing algorithms. Methods exploiting the sparsity pattern of the underlying graph have been proposed for general polynomial optimization problems in in  \cite{fukuda2001} and \cite{nakata2003}, and later applied to the OPF problem in \cite{jabr2012, molzahn2013, madani2014, molzahn2014, madani2015}. Despite the considerable efforts, solving large-scale AC OPF with SDP relaxation is still a computational challenge for real-world power systems.
%The computational complexity associated with solving large-scale SDPs can be relieved by incorporating sparsity exploitation methods which utilize chordal extension and matrix completion techniques, first proposed for general problems 

The scalability issue of SDP-based relaxations motivates researchers (including the present paper) to pursue simpler convex relaxations, such as second-order cone programming (SOCP) and linear programming (LP) relaxations. Basic SOCP relaxation is first applied to the OPF problem in \cite{Jabr06}. Various strengthening techniques using valid inequalities, convex envelopes, and cutting planes are proposed to strengthen the SOCP relaxation, and the resulting strong SOCP relaxations prove to be not dominated by SDP relaxations and can be solved much faster \cite{kocuk2014, kocuk2015, hijazi2013, coffrin2015}. LP approximation and relaxation techniques are also applied to the OPF problem in \cite{coffrin2014, bienstock2014}. Global methods try to combine the above methods. A particular work in this area is \cite{Phan}, where a local solver (IPOPT) is used to find feasible primal solutions and a Lagrangian relaxation algorithm is used to obtain dual bounds in a spatial branch-and-bound framework. However, the proposed approach in \cite{Phan} is only tested on standard IEEE instances, which turn out to be simple instances and can all be solvable in the root node. A recent work \cite{Chen2015Report} also proposed a branch-and-cut algorithm for solving complex quadratic constrained quadratic programs with applications to AC OPF.

\subsection{Our contributions}
In this paper, we aim to solve the AC OPF problem to global optimum. We first propose a new systematic approach to dealing with the rank constraint of a complex Hermitian matrix in a nonconvex QCQP in lifted space. 
%In particular, we characterize that the constraint that the rank of a Hermitian matrix is one is equivalent to a condition that \emph{all} the $2\times 2$ minors of the matrix being zero. 
In particular, we use the fact that the enforcing the rank of a Hermitian matrix being equal to one is equivalent to imposing the condition that \emph{all} the $2\times 2$ minors of the matrix are zero. 
These minor constraints can be classified into three types. Each type can be further reformulated as a set of quadratic or bilinear constraints involving a small set of variables. This reformulation applies to any non-convex quadratic problem with Hermitian matrices and it lays the foundation for a systematic approach to relaxing and/or convexifying the rank constraints. Interestingly, these three types of minors also have clear interpretations. In particular, they correspond to edges, 3-cycles (i.e. cycles of three nodes), and 4-cycles (i.e. cycles of 4 nodes) of the underlying network. In the context of AC OPF, these cycle constraints have a clear physical meaning, e.g. voltage angle differences of edges in a cycle sum to zero. In \cite{Chen2015IEEE, Chen2015Report, Hijazi2015SDP}, only \textit{principal} $2\times2$ minors, corresponding to Type 1 minors in our language, are considered to improve the SDP relaxation. By considering the other two types of non-principal minors, we are able to obtain stronger relaxations than existing ones.

Based on this minor reformulation, we propose convexification techniques to exploit the special structures of cycles in a graph, and design and implement a complete spatial branch-and-cut algorithm based on second-order conic programming (SOCP) for obtaining global or near global optimal solutions of AC OPF. Extensive computational experiments show that the proposed algorithm significantly outperforms the state-of-the-art SDP-based OPF solver and 
 on a simple personal computer is able to obtain on average a $0.71\%$ optimality gap in no more than $720$ seconds for the most challenging power system instances in the literature.   

The rest of the paper is organized as follows: In Section \ref{sec:opfintro}, we formally define the AC OPF problem. In Section \ref{sec:rank constrained}, we give the new reformulation of a complex SDP problem with rank constraint as a real quadratic optimization problem with minor constraints. Then, we propose several outer-approximation schemes to incorporate the minor conditions in a convex relaxation of the AC OPF problem. Section \ref{sec:alg socp} presents our SOCP based spatial branch-and-cut algorithm with a particular emphasis on the root node relaxation. Our extensive computational experiments on challenging NESTA instances \cite{nesta} are summarized in Section \ref{sec: comp exper}. Finally, Section \ref{sec: concl} concludes the paper with some further remarks and possible future research directions.

\section{AC Optimal Power Flow Problem}\label{sec:opfintro}
In this section, we present the mathematical optimization formulation of the AC OPF problem using the so-called rectangular formulation. 
Consider a power network $\mathcal{N} = (\mathcal{B},\mathcal{L})$, where $\mathcal{B}$ denotes the set of nodes (or buses),  and $\mathcal{L}$ denotes the set of edges (or transmission lines). Electric power generators are attached to a subset of buses, which is denoted as $\mathcal{G}\subseteq \mathcal{B}$. We assume that there is electric demand (or load) at every bus, some of which might potentially be zero. The aim of the AC OPF problem is to determine generation levels of generators to satisfy demand with the minimum total generation cost in such a way that various physical constraints and operational constraints are satisfied.

Let $Y \in \mathbb{C}^{|\mathcal{B}| \times |\mathcal{B}|}$ denote the nodal admittance matrix, which has components $Y_{ij}=G_{ij} + \mathrm{i}B_{ij}$ for each line $(i,j)\in\mathcal{L}$, and $G_{ii}=g_{ii}-\sum_{j\ne i} G_{ij}, B_{ii}=b_{ii}-\sum_{j\ne i} B_{ij}$, where $g_{ii}$ and 
$b_{ii}$ are respectively  the shunt conductance and susceptance at bus $i\in\mathcal{B}$. Here, $\mathrm{i}=\sqrt{-1}$. 
Let $p_i^d$ and  $q_i^d$ be the real and reactive load at bus $i$ given as part of the data. 
We also define decision variables $p_i^g$ and  $q_i^g$ to represent the real and reactive power output of the generator at bus $i$. Another set of decision variables is the complex voltage (also called voltage phasor) $V_i$ at bus $i$, which can be expressed in the rectangular form as $V_i = e_i+\mathrm{i} f_i$.
It is sometimes convenient to represent $V_i$ in the polar form as $V_i = |V_i|(\cos\theta_i+\mathrm{i}\sin\theta_i)$, where $|V_i|^2=e_i^2 + f_i^2$ is the  magnitude and $\theta_i$ is the angle of the complex voltage. 
%In power system analysis, the voltage magnitude is usually normalized against a unit voltage level and is expressed in per unit (p.u.). For example, if the unit voltage is 100kV, then 110kV is expressed as 1.1 p.u.. In transmission systems, the bus voltage magnitudes are usually restricted to be close to the unit voltage level to maintain system stability. 

Finally, the AC OPF problem in the rectangular formulation is given below~\cite{Carpentier}.
\vspace{-5mm}
\begin{subequations}\label{rect}
\begin{align}
%(\mathcal{R}) :  
\min  &\hspace{0.25em}  \sum_{i \in \mathcal{G}} C_i(p_i^g)  \label{obj} \\
  \mathrm{s.t.}   &\hspace{0.25em} p_i^g-p_i^d = G_{ii}(e_i^2+f_i^2) + \sum_{j \in \delta(i)}[ G_{ij}(e_ie_j+f_if_j) -B_{ij}(e_if_j-e_jf_i)]   & i& \in \mathcal{B} \label{activeAtBus} \\
  & \hspace{0.25em} q_i^g-q_i^d = -B_{ii}(e_i^2+f_i^2) + \sum_{j \in \delta(i)}[ -B_{ij}(e_ie_j+f_if_j) -G_{ij}(e_if_j-e_jf_i)]  & i& \in \mathcal{B} \label{reactiveAtBus} \\
  & \hspace{0.25em} \underline V_i^2 \le e_i^2+f_i^2 \le \overline V_i^2    & i& \in \mathcal{B} \label{voltageAtBus} \\
  & \hspace{0.25em}  p_i^{\text{min}}  \le p_i^g \le p_i^{\text{max}}     & i& \in \mathcal{B} \label{activeAtGenerator} \\
  & \hspace{0.25em} q_i^{\text{min}}  \le q_i^g \le q_i^{\text{max}}     & i& \in \mathcal{B}. \label{reactiveAtGenerator}\\
  & \hspace{0.5em}  [-G_{ij}(e_i^2+f_i^2) + G_{ij}(e_ie_j+f_if_j) -B_{ij}(e_if_j-e_jf_i)]^2 \nonumber  \\
 &+ [B_{ij}(e_i^2+f_i^2) -  B_{ij}(e_ie_j+f_if_j) - G_{ij}(e_if_j-e_jf_i)]^2   \le (S_{ij}^{\text{max}})^2  &(&i,j) \in \mathcal{L} \label{powerOnArc} 
\end{align}
\end{subequations}

%
%Here, $C_i(p_i^g)$ in \eqref{objR} represents the production cost of generator $i$, which typically is either a linear or a convex quadratic nondecreasing function of $p_i^g$. Constraints \eqref{activeAtBusIntro}-\eqref{reactiveAtBusIntro} enforce flow conservation at each bus $i$, where $\delta(i)$ is the set of buses adjacent to $i$.
%Constraint \eqref{voltageAtBusIntro} limits the upper and lower bounds on the bus voltage magnitudes. Usually $\underline V_i$ and $\overline  V_i$ are close to the unit voltage. 
%Constraints \eqref{activeAtGeneratorIntro}-\eqref{reactiveAtGeneratorIntro} are the upper and lower bounds on generator $i$'s real and reactive power, respectively.
%Constraint \eqref{apparentAtLineIntro} imposes an upper bound on apparent power over line $(i,j)$. 

In the OPF formulation, the objective function is separable over the generators and $C_i(p_i^g)$ is typically linear or convex quadratic in the real power output $p_i^g$.
Constraints \eqref{activeAtBus} and \eqref{reactiveAtBus} correspond to the Kirchoff's Current Law, i.e. the conservation of active and reactive power flows at each bus, respectively, where $\delta(i)$ denotes the set of neighbors of bus $i$. 
Constraint \eqref{voltageAtBus} limits voltage magnitude at each bus, which is usually normalized against a unit voltage level and is expressed in per unit (p.u.). Therefore, $\underline V_i$ and $\overline V_i$ are both close to  1 p.u. at each bus $i$. This strict requirement is crucial to maintain the stability across the power system. 
Constraints \eqref{activeAtGenerator} and \eqref{reactiveAtGenerator}, respectively, restrict the active and reactive power output of each generator to their physical capability.  Here, we set $ p_i^{\text{min}} =  p_i^{\text{max}}=q_i^{\text{min}} =  q_i^{\text{max}}=0$ for non-generator bus $i \in \mathcal{B} \setminus \mathcal{G}$.
Finally, constraint \eqref{powerOnArc} puts an upper bound on the total electric power on each transmission line $(i,j)$. %Different quantities are also constrained as line flow limits in the literature \cite{madani2015, coffrin2015}, such as real power flow, current magnitude, voltage magnitude difference and phase angle difference.

\section{Minor-based Reformulation of AC OPF Problem}
\label{sec:rank constrained}
\subsection{Standard rank formulation}

The AC OPF problem given in \eqref{rect} is a polynomial optimization problem involving quadratic and quartic polynomials, and it can be posed as a quadratically constrained quadratic program (QCQP) by rewriting the constraint \eqref{powerOnArc} using additional variables. For nonconvex QCQPs, a standard strategy is to {lift} the variable into a higher dimensional space \cite{Nesterov2000}. This can be accomplished by defining a matrix variable to replace the quadratic terms in the original variables with linear ones in the lifted matrix variable. In this reformulation, the only nonconvex constraint is a requirement that the rank of the matrix variable is one, which we call the rank-one constraint. The procedure to obtain this lifted formulation for the OPF problem is given below.

%\subsubsection{SDP Relaxations of Rectangular Formulation ($\mathcal{R}_{SDP}, \mathcal{R}^c_{SDP}, \mathcal{R}^r_{SDP}$).} 
%To apply SDP relaxation to the rectangular formulation \eqref{rect}, 
First, let us define a Hermitian matrix $X\in\mathbb{C}^{|\mathcal{B}|\times|\mathcal{B}|}$, i.e., $X=X^*$, where $X^*$ is the conjugate transpose of $X$. Consider the following set of constraints:
\begin{subequations} \label{rect sdp}
%\begin{align}
%&\hspace{0.25em} p_i^g-p_i^d = G_{ii}X_{ii} + \sum_{j \in \delta(i)}[ G_{ij} \Re(X_{ij}) +B_{ij} \Im(X_{ij})]   & i& \in \mathcal{B} \label{rect sdp first}  \\
%& \hspace{0.25em} q_i^g-q_i^d = -B_{ii}X_{ii} + \sum_{j \in \delta(i)}[ -B_{ij}\Re(X_{ij}) +G_{ij}\Im(X_{ij})]  & i& \in \mathcal{B} \label{rect sdp second} \\
%& \hspace{0.25em}  \underline V_i^2 \le X_{ii} \le \overline V_i^2   & i& \in \mathcal{B}\label{rect sdp last} \\
%& \hspace{0.25em}  [-G_{ij}X_{ii} + G_{ij}\Re(X_{ij}) +B_{ij} \Im(X_{ij})]^2+[B_{ij}X_{ii}-B_{ij}\Re(X_{ij}) +G_{ij} \Im(X_{ij})]^2  \le (S_{ij}^{\text{max}})^2  &(&i,j) \in \mathcal{L}   \label{rect sdp flow}   \\ 
%& \hspace{0.25em} X \textup{ is hermitian}   \label{hermitian} \\
%& \hspace{0.25em}  X \succeq  0   \label{psd cons} \\
%& \hspace{0.25em} \text{rank} (X)= 1,   \label{psd rank} 
%\end{align}
\begin{align}
& p_i^g-p_i^d = G_{ii}X_{ii} + \sum_{j \in \delta(i)}[ G_{ij} \Re(X_{ij}) +B_{ij} \Im(X_{ij})]   \hspace{4cm} \forall i \in \mathcal{B} \label{rect sdp first}  \\
& q_i^g-q_i^d = -B_{ii}X_{ii} + \sum_{j \in \delta(i)}[ -B_{ij}\Re(X_{ij}) +G_{ij}\Im(X_{ij})]  \hspace{3.4cm} \forall i \in \mathcal{B} \label{rect sdp second} \\
&  \underline V_i^2 \le X_{ii} \le \overline V_i^2   \hspace{10cm} \forall i \in \mathcal{B}\label{rect sdp last} \\
& [-G_{ij}X_{ii} + G_{ij}\Re(X_{ij}) +B_{ij} \Im(X_{ij})]^2 +[B_{ij}X_{ii}-B_{ij}\Re(X_{ij}) +G_{ij} \Im(X_{ij})]^2  \le (S_{ij}^{\text{max}})^2  \; \forall (i,j) \in \mathcal{L}   \label{rect sdp flow}   \\ 
& X \textup{ is Hermitian}   \label{hermitian} \\
& X \succeq  0   \label{psd cons} \\
& \text{rank} (X)= 1,   \label{psd rank} 
\end{align}
\end{subequations}
where $\Re(x)$ and $\Im(x)$ are the real and imaginary parts of the complex number $x$, respectively. Let $V$ denote the vector of voltage phasors with the $i$-th entry $V_i =e_i+\mathrm{i}f_i$ for each bus $i\in\mathcal{B}$.  Then, the AC OPF problem in \eqref{rect} can be equivalently rewritten in the following \emph{rank formulation}:
\begin{equation}\label{rank cons OPF form}
z_{\text{rank}}:=\min \left\{ \sum_{i \in \mathcal{G}} C_i(p_i^g) : \eqref{rect sdp}, \eqref{activeAtGenerator}-\eqref{reactiveAtGenerator} \right\}.
\end{equation}
Let $z_{\text{ACOPF}}$ denote the optimal value of the AC OPF problem in the rectangular formulation \eqref{rect}. Then, clearly $z_{\text{ACOPF}} = z_{\text{rank}}$.
The nonconvexity in the rank formulation \eqref{rank cons OPF form} is captured by the rank constraint \eqref{psd rank}. In order to deal with this challenging constraint, in the following we propose a new reformulation of AC OPF, based on conditions on the minors of matrix~$X$. 

\subsection{New approach: minor-based reformulation}
In this section, we propose a new approach to reformulating the rank-one constraints. First, we recall the definition of an $m\times m$ minor of a matrix \cite{Horn}:
\begin{dfn}
Let $X$ be an $n \times n$ complex matrix. Then, an $m\times m$ minor of the matrix $X$ for $1\le m\le n$ is the determinant of an $m\times m$ submatrix of $X$ obtained by deleting $n-m$ rows and columns from $X$.
\end{dfn}
The following Proposition \ref{rank minor prop} provides the key characterization of the rank-one constraint in terms of $2\times 2$ minors. 
\begin{prop} \label{rank minor prop}
A nonzero, Hermitian matrix $X$ is positive semidefinite and has rank one, i.e. $X \succeq 0$ and $\text{rank} (X)= 1$, if and only if all the $2\times2$ minors of $X$ are zero and the diagonal elements of $X$ are nonnegative.
\end{prop}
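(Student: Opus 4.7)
The plan is to prove both directions of the equivalence directly, using the fact that a rank-one Hermitian PSD matrix admits an outer-product factorization.

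For the forward direction ($X \succeq 0$, $\mathrm{rank}(X)=1 \Rightarrow$ minors vanish and diagonals nonnegative), I would use the standard fact that a rank-one Hermitian PSD matrix can be written as $X = vv^*$ for some $v \in \mathbb{C}^n$. Then $X_{ii} = |v_i|^2 \geq 0$, so the diagonal condition is immediate. For any $2\times 2$ submatrix indexed by rows $\{i,k\}$ and columns $\{j,l\}$, the submatrix equals
\begin{equation*}
\begin{pmatrix} v_i \bar v_j & v_i \bar v_l \\ v_k \bar v_j & v_k \bar v_l \end{pmatrix},
\end{equation*}
whose determinant is $v_i \bar v_j v_k \bar v_l - v_i \bar v_l v_k \bar v_j = 0$.

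For the reverse direction, the main work is to construct an explicit vector $v$ from the matrix $X$. Since $X$ is nonzero, I would first show that at least one diagonal entry is strictly positive: if every $X_{kk}$ were zero, then the principal $2\times 2$ minor condition $X_{ii}X_{jj} - |X_{ij}|^2 = 0$ would force $|X_{ij}|^2 = 0$ for all $i,j$, contradicting $X \neq 0$. So pick an index $k$ with $X_{kk} > 0$, and define
\begin{equation*}
v_i := X_{ik}/\sqrt{X_{kk}}, \quad i = 1, \ldots, n.
\end{equation*}
I would then verify the outer-product identity $v_i \bar v_j = X_{ij}$ for every $i,j$. Using $\bar v_j = \overline{X_{jk}}/\sqrt{X_{kk}} = X_{kj}/\sqrt{X_{kk}}$ (by the Hermitian property $\overline{X_{jk}} = X_{kj}$), this reduces to showing $X_{ik} X_{kj} = X_{ij} X_{kk}$, which is precisely the vanishing of the $2\times 2$ minor on rows $\{i,k\}$ and columns $\{j,k\}$:
\begin{equation*}
\det\begin{pmatrix} X_{ij} & X_{ik} \\ X_{kj} & X_{kk} \end{pmatrix} = 0.
\end{equation*}
Hence $X = vv^*$, which is automatically Hermitian PSD and has rank at most one; since $X \neq 0$, the rank is exactly one.

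The step I expect to require the most care is the construction argument in the reverse direction, specifically ensuring a legitimate choice of pivot column $k$ with $X_{kk} > 0$ and then being careful with complex conjugation when invoking the Hermitian hypothesis to match the chosen $2\times 2$ minor to the needed identity. Everything else is a direct computation or a standard structural fact about rank-one Hermitian matrices.
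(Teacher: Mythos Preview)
Your proof is correct. The forward direction is essentially identical to the paper's argument: both write $X = vv^*$ and compute minors and diagonals directly.

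The reverse direction differs in a meaningful way. The paper argues abstractly: since every $2\times 2$ minor vanishes, no $2\times 2$ submatrix is invertible, so the rank (which equals the size of the largest invertible submatrix) is at most one; nonzero forces rank exactly one. It then invokes the fact that a rank-one Hermitian matrix with nonnegative diagonal can be written as $xx^*$ (implicitly using the spectral decomposition $X = \lambda uu^*$ and ruling out $\lambda < 0$ via the diagonal sign). Your route is more constructive: you first secure a pivot index $k$ with $X_{kk} > 0$ from the principal-minor identity $X_{ii}X_{jj} = |X_{ij}|^2$, then explicitly build $v_i = X_{ik}/\sqrt{X_{kk}}$ and verify $X = vv^*$ entrywise using the non-principal minors on rows $\{i,k\}$ and columns $\{j,k\}$. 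The paper's argument is shorter but leans on two cited linear-algebra facts; yours is self-contained and makes transparent exactly which minors are doing the work (principal minors supply the pivot, Type~2 minors in the paper's later language supply the off-diagonal identities). One small point worth stating explicitly in a final write-up: the cases $i = k$ or $j = k$ in the verification $v_i \bar v_j = X_{ij}$ follow immediately from the definition of $v$ and do not require a minor, so no issue arises from needing distinct row/column indices.
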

\begin{proof}
\noindent($\Rightarrow$) Assume $X \succeq 0$ and $\text{rank} (X)= 1$. Then, there exists a vector $x \in \mathbb{C}^n$ such that $X=xx^*$. Let us consider a $2\times2$ submatrix of $X$ of the form 
\begin{align*}
X_{(i,j),(k,l)}=\begin{bmatrix} X_{ik} & X_{il} \\ X_{jk} & X_{jl} \end{bmatrix}.
\end{align*} Note that we have $X_{ik}=x_i x_k^*$, $X_{il}=x_i x_l^*$, $X_{jk}=x_j x_k^*$, and $X_{jl}=x_j x_l^*$. Hence, $\det X_{(i,j),(k,l)} = 0$, implying that all  the $2\times2$ minors of $X$ are zero. Also, for any diagonal element of the matrix $X$, we have $X_{ii} = x_i x_i^* = |x_i|^2 \ge 0$. Therefore, we conclude that the  diagonal elements of $X$ are nonnegative.

\noindent($\Leftarrow$) Recall that rank$(X)$ is equal to the size of the largest invertible submatrix of $X$ \cite{Horn}. Since  all the $2\times2$ minors of $X$ are zero, none of the $2\times2$ submatrices of $X$ are invertible. This implies that rank$(X)=1$ as $X$ is a nonzero matrix. Finally, since  a rank-one, Hermitian matrix with nonnegative diagonal elements can be written in the form $X=xx^*$  for some vector $x \in \mathbb{C}^n$, we conclude that $X \succeq 0$.
\end{proof}
Based on this property of matrix minors, we can reformulate the AC OPF problem \eqref{rect} as
\begin{equation}\label{minor}
z_{\text{minor}}=\min \left\{ \sum_{i \in \mathcal{G}} C_i(p_i^g) : \eqref{rect sdp first}-\eqref{psd cons}, \eqref{activeAtGenerator}-\eqref{reactiveAtGenerator}, 
\text{ all $2 \times 2$ minors of $X$ are zero} \right\}.
\end{equation}
We call \eqref{minor} the minor-based reformulation, which is an exact reformulation of \eqref{rect}. Therefore, we have $z_{\text{ACOPF}} = z_{\text{rank}} = z_{\text{minor}}$.

\subsection{Discussion on relaxations}
The rank \eqref{rank cons OPF form} and the minor-based \eqref{minor} reformulations lead to systematic ways to construct relaxations of the AC OPF problem \eqref{rect}. For example, the standard SDP relaxation for the OPF problem can be obtained by relaxing the rank constraint \eqref{psd rank}, or equivalently, all the minor constraints in \eqref{minor} as
\begin{equation}\label{OPF sdp relax}
z_{\text{SDP}}=\min \left\{ \sum_{i \in \mathcal{G}} C_i(p_i^g) : \eqref{rect sdp first}-\eqref{psd cons}, \eqref{activeAtGenerator}-\eqref{reactiveAtGenerator} \right\}.
\end{equation}
By construction, $z_{\text{SDP}} \le z_{\text{ACOPF}}$. The standard SDP relaxation has been shown to be exact for the standard IEEE test cases up to 300 buses \cite{Lavaei12, Madani}. However, it is also shown by recent study that, for challenging test cases such as the instances from NESTA archive \cite{nesta}, the duality gap of the standard SDP relaxation \eqref{OPF sdp relax} can be quite large \cite{coffrin2015}. Another challenge of \eqref{OPF sdp relax} is the computational difficulty of solving large-scale SDP problems. Although sparsity exploitation methods help reduce the solution times significantly \cite{molzahn2014}, for truly large-scale power systems, it is necessary to seek computationally less demanding methods.
%
%A simple example of a computationally  relaxation can be obtained by relaxing further the positive definiteness requirement of the matrix $X$. In this case, we obtain  a flow relaxation as
%\begin{equation}\label{OPF flow relax}
%z_{\text{flow}}=\min \left\{ \sum_{i \in \mathcal{G}} C_i(p_i^g) : \eqref{rect sdp first}-\eqref{hermitian}, \eqref{activeAtGenerator}-\eqref{reactiveAtGenerator} \right\}.
%\end{equation}
%This relaxation is clearly weaker than the SDP relaxation, hence we have $z_{\text{flow}} \le z_{\text{SDP}} \le z_{\text{ACOPF}}$. On the other hand, flow relaxation is much easier than solving the full SDP relaxation.

Our approach is to avoid the SDP constraint \eqref{psd cons}, which is computationally expensive to handle, and focus on providing efficient ways to incorporate convex relaxations of the minor constraints in the form of \textit{linear} or \textit{second-order conic} representable constraints. In this approach, some of the proposed constraints will be a convex relaxation of the $2\times2$ minor constraints, discussed in Sections \ref{sec:2by2 minors} and \ref{sec:2by2 minors arctan}; and some will be a weaker version of the semidefiniteness requirement on $X$, discussed in Sections \ref{sec:2by2 minors} and \ref{sec: sdp separate}. The computational cost of the proposed approach can be significantly lower than the standard SDP relaxation. Moreover, the convex relaxation of the minor constraints may provide additional strengthen that is not captured by the standard SDP relaxation. In this way, the proposed approach may be both faster and stronger than the SDP relaxation, which is indeed verified by extensive experiments on the challenging NESTA instances. 
%Also, by construction, our approach can be expected to give better bounds than the flow relaxation. Numerical experiments  
% is incomparable to,  and often  provides much better dual bounds than, the SDP relaxation, especially for challenging NESTA instances.

\subsection{Analysis of $2\times2$ Minors}
\label{sec:2by2 minors}

In this section, we analyze all the $2\times2$ minors of the matrix $X$ in detail and characterize them into three different types. Note that for notational purposes, we will define $c_{ij} := \Re(X_{ij})$ and $s_{ij} := -\Im(X_{ij})$ for $i,j \in \mathcal{B}$ in the sequel.
\begin{enumerate}
\item Type 1: Edge Minor. 
Let $i$ and $j$ be distinct elements of the set $\mathcal{B}$. Then, we have
\begin{equation}
 \begin{vmatrix} X_{ii} & X_{ij} \\ X_{ji} & X_{jj} \end{vmatrix} = 0, \label{type 1 minor}
\end{equation}
which is equivalent to 
\begin{equation}
0 = X_{ii}X_{jj} - X_{ij}X_{ji} =  c_{ii}c_{jj}  -  (c_{ij} - \mathrm{i}s_{ij})(c_{ij} + \mathrm{i}s_{ij}) = c_{ii}c_{jj}  - (c_{ij}^2 + s_{ij}^2). \label{type 1 minor cs}
\end{equation}
Note that this relation defines the boundary of the rotated SOCP cone in $\mathbb{R}^4$.

\item Type 2: 3-Cycle Minor.
Let $i$, $j$ and $k$ be distinct elements of the set $\mathcal{B}$, assuming $|\mathcal{B}| \ge 3$. Then, consider the following minor
\begin{equation}
 \begin{vmatrix} X_{ii} & X_{ij} \\ X_{ki} & X_{kj} \end{vmatrix} = 0, \label{type 2 minor}
\end{equation}
which is equivalent to 
\begin{equation}\label{type 2 minor cs}
\begin{split}
0 =& X_{ii}X_{kj} - X_{ij}X_{ki} =  c_{ii}(c_{kj} - \mathrm{i}s_{kj})  -  (c_{ij} - \mathrm{i}s_{ij})(c_{ki} - \mathrm{i}s_{ki}) \\
=& (c_{ii}c_{kj} - c_{ij}c_{ki}  + s_{ij}s_{ki}) -  \mathrm{i}(c_{ii}s_{kj} - s_{ij}c_{ki}  - c_{ij}s_{ki}) .
\end{split}
\end{equation}
Note that this relation defines two bilinear equations in $\mathbb{R}^7$.

\item Type 3: 4-Cycle Minor.
Let $i$, $j$, $k$  and $l$ be distinct elements of the set $\mathcal{B}$, assuming $|\mathcal{B}| \ge 4$. Then, consider the following minor
\begin{equation}
 \begin{vmatrix} X_{ij} & X_{ik} \\ X_{lj} & X_{lk} \end{vmatrix} = 0, \label{type 3 minor}
\end{equation}
which is equivalent to 
\begin{equation}\label{type 3 minor cs}
\begin{split}
0 =& X_{ij}X_{lk} - X_{ij}X_{lj} = (c_{ij} - \mathrm{i}s_{ij})(c_{lk} - \mathrm{i}s_{lk})  -  (c_{ij} - \mathrm{i}s_{ij})(c_{lj} - \mathrm{i}s_{lj}) \\
=& (c_{ij}c_{lk} - s_{ij}s_{lk}  - c_{lj}c_{ik}  + s_{lj}s_{ik}) -  \mathrm{i}(s_{ij}c_{lk} - c_{ij}s_{lk}  - s_{lj}c_{ik}  - c_{lj}s_{ik}) .
\end{split}
\end{equation}
Note that this relation defines two bilinear equations in $\mathbb{R}^8$.
\end{enumerate}

%...
%
%
%ADD A THEOREM/PROPOSITION 
%
%..
%
%
%ON THE NUMBER OF MINORS 
%
%..
%
%THAT MUST BE CONSIDERED
%
%...

We analyze these minors and their relaxations in detail below. At this point, we would like to contrast our approach with the existing literature. In \cite{Chen2015IEEE, Chen2015Report, Hijazi2015SDP}, only \textit{principal} $2\times2$ minors, corresponding to Type 1 minors in our language, are considered to improve the SDP relaxation. By considering Type 2 and 3 minors, we can potentially obtain stronger relaxations.

\subsubsection{Type 1: Edge Minors}
%{Edge Cuts: Approximation of the Surface of Rotated Cone}
%Note that Type 1 minor is principal. 
%Let $i$ and $j$ be distinct elements of the set of buses $\mathcal{B}$ such that $(i,j)\in\mathcal{L}$. Note that since this minor is also principal, guaranteeing its nonnegativity also relaxes the positive semidefinitess restriction. Therefore, an accurate relaxation of Type 1 minors will help two-fold.

% Then, consider the following minor
%$$
%\det \begin{vmatrix} X_{ii} & X_{ij} \\ X_{ji} & X_{jj} \end{vmatrix} = 0,
%$$ 
%which is equivalent to 
%\begin{equation}
%0 = X_{ii}X_{jj} - X_{ij}X_{ji} =  c_{ii}c_{jj}  -  (c_{ij} - \mathrm{i}s_{ij})(c_{ij} + \mathrm{i}s_{ij}) = c_{ii}c_{jj}  - (c_{ij}^2 + s_{ij}^2).
%\end{equation}
%Note that this relation defines the boundary of the rotated cone in $\mathbb{R}^4$. 

Type 1 minors are principal minors of the matrix $X$. The straightforward convex relaxation of \eqref{type 1 minor cs} leads to an SOCP relaxation of the standard SDP relaxation of AC OPF and has been extensively used in the literature \cite{Jabr06, Jabr08, Chen2015IEEE, Chen2015Report, Hijazi2015SDP}. Our goal is to go beyond the simple SOCP relaxation and to obtain the convex hull description of the set defined by a Type 1 minor in \eqref{type 1 minor} over the hypercube  $\mathcal{D}_{ij}:=[\underline c_{ii}, \overline c_{ii}]\times[\underline c_{jj}, \overline c_{jj}]\times[\underline c_{ij}, \overline c_{ij}]\times[\underline s_{ij}, \overline s_{ij}]$, where $(i,j)$ is an edge in the power network. Specifically, we define the following nonconvex set.
\begin{equation}
\mathcal{K}_{ij}^=:=\left\{(c_{ii},c_{jj}, c_{ij}, s_{ij})\in\mathcal{D}_{ij}: c_{ij}^2+s_{ij}^2  = c_{ii}c_{jj}   \right\}	\label{cone box cons}.
\end{equation}
%Below, we prove the following proposition:
%\begin{thm}
%$\text{conv}(\mathcal{K}_{ij}^=)$ 
%%\end{thm}

\paragraph{Convex hull description of $\mathcal{K}_{ij}^=$.}
It follows from \cite[Theorem 1]{Tawarmalani2016} that
\begin{equation}
\text{conv}(\mathcal{K}_{ij}^=) = \mathcal{K}_{ij}^\le \cap \text{conv}(\mathcal{K}_{ij}^\ge),
\end{equation}
where $\mathcal{K}_{ij}^\le$ and $\mathcal{K}_{ij}^\ge$ are defined accordingly. First of all, $\mathcal{K}_{ij}^\le$ is a convex set (in fact, second-order cone representable as it is the rotated cone in $\mathbb{R}^4$). Therefore,  in order to construct $\text{conv}(\mathcal{K}_{ij}^=) $, it suffices to find $\text{conv}(\mathcal{K}_{ij}^\ge)$, which is the intersection of a polytope and the complement of a convex set. Here, we use the following result:

\begin{thm}(Theorem 1 in  \cite{hillestad1980}) \label{thm:poly reverse}
Let $P \subset \mathbb{R}^n$ be a nonempty polytope and $G$ be a proper subset of $\mathbb{R}^n$ such that $\mathbb{R}^n \setminus G$ is convex. Then, $\text{conv}(P \cap G)$ is a polytope.
\end{thm}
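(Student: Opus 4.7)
The plan is to show that $K := \text{conv}(P \cap G)$ has only finitely many extreme points; together with compactness this will force $K$ to be the convex hull of a finite point set, i.e.\ a polytope. Let $D := \mathbb{R}^n \setminus G$, so by hypothesis $D$ is a nonempty convex set and $P \cap G = P \setminus D$. To apply the Minkowski/Krein--Milman theorem I need $K$ to be compact, which follows immediately under the mild assumption that $G$ is closed (equivalently, $D$ is open), the case that arises in the paper's application; the fully general case can be handled afterwards by a closure-reduction.

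The central claim is that every extreme point $x$ of $K$ is either a vertex of $P$ or lies on some edge of $P$ at a point of $\partial D$. Since $K$ is the convex hull of the compact set $P \cap G$, every extreme point $x$ of $K$ in fact lies in $P \cap G$. Let $F$ be the minimal face of $P$ containing $x$, so that $x$ is in the relative interior of $F$. I would rule out $\dim F \ge 2$ and then analyze $\dim F = 1$ by an elementary one-dimensional argument.

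Suppose $\dim F \ge 2$. If $x \notin \overline{D}$ then an entire relative neighborhood of $x$ in $F$ lies in $F \cap G$ and $x$ is manifestly not extreme, so assume $x \in \partial D$. Let $H$ be a supporting hyperplane of $\overline{D}$ at $x$. Because $D$ is open and $H$ supports $\overline{D}$, we have $D \cap H = \emptyset$. The affine set $H \cap \text{aff}(F)$ contains $x$ and has dimension at least $\dim F - 1 \ge 1$ (or equals $\text{aff}(F)$ if $\text{aff}(F) \subset H$), so I can choose a nonzero direction $d$ in it. For sufficiently small $\varepsilon > 0$ the segment $\{x + td : |t| \le \varepsilon\}$ lies in $F$ (since $x \in \text{ri}(F)$) and entirely inside $H$, hence is disjoint from $D$. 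Then $x = \tfrac12(x + \varepsilon d) + \tfrac12(x - \varepsilon d)$ exhibits $x$ as the midpoint of two distinct points of $P \cap G$, contradicting extremality.

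If $\dim F = 1$, the same mechanism restricted to an edge $F$ shows that $F \cap D$ is an open convex subsegment (or empty) of $F$, so $F \setminus D$ is a union of at most two closed subsegments whose endpoints are either vertices of $P$ or points of $F \cap \partial D$, contributing at most two non-vertex extreme-point candidates per edge. Since $P$ has finitely many vertices and edges, $K$ has finitely many extreme points and is therefore a polytope. The step I expect to be the main obstacle is the supporting-hyperplane argument for $\dim F \ge 2$: one needs openness of $D$ to guarantee $D \cap H = \emptyset$ and a dimension count to ensure $H \cap \text{aff}(F)$ is at least a line, so that a usable direction $d$ genuinely exists.
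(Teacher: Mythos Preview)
Your overall strategy---show that every extreme point of $\text{conv}(P\cap G)$ lies on an edge of $P$---is exactly the content of the Hillestad--Jacobsen result as the paper reports it: the paper does not give its own proof but quotes the constructive conclusion $\text{conv}(P\cap G)=\text{conv}\bigl(\bigcup_l \text{conv}(E_l\cap G)\bigr)$ over the edges $E_l$ of $P$, which is what your supporting-hyperplane argument for $\dim F\ge 2$ establishes. Under your working assumption that $G$ is closed (equivalently $D$ open), your argument is correct and cleanly executed; this hypothesis indeed holds in the paper's application, where $G=\{c_{ij}^2+s_{ij}^2\ge c_{ii}c_{jj}\}$.

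The one genuine gap is your throwaway claim that ``the fully general case can be handled afterwards by a closure-reduction.'' It cannot: take $P=[0,1]^2$ and $D=\{(x_1,x_2):x_1\le \tfrac12\}$, so $D$ is closed convex and $G=\{x_1>\tfrac12\}$. Then $P\cap G$ is already convex and equals $\{(x_1,x_2):\tfrac12<x_1\le 1,\ 0\le x_2\le 1\}$, which is not closed and hence not a polytope. So the theorem as literally stated in the paper is false without a closedness assumption on $G$; presumably the original reference carries such a hypothesis. You should simply assume $G$ closed, note that this is what the application needs, and drop the unsupported reduction claim.
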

The proof of Theorem \ref{thm:poly reverse} is constructive. Let $E_l$, $l=1,\dots,L$ be all the one-dimensional faces of the polytope $P$, i.e. edges of $P$. Then, we have 
\begin{equation}
\text{conv}(P \cap G) = \text{conv}\left(\bigcup_{l=1}^L \text{conv}(E_l  \cap G)\right).
\end{equation}
Hence, $\text{conv}(P \cap G)$ is precisely the convex hull of the extreme points of $ \text{conv}(E_l \cap G)$, $l=1,\dots,L$.

We can apply Theorem  \ref{thm:poly reverse} to our case, where $P=\mathcal{D}_{ij}$ and $G=\{(c_{ii},c_{jj}, c_{ij}, s_{ij})\in \mathbb{R}^4: c_{ij}^2+s_{ij}^2  \ge c_{ii}c_{jj}   \}$. The hypercube $\mathcal{D}_{ij}$ has 32 one-dimensional faces. The extreme points of $\text{conv}\left(\mathcal{K}_{ij}^\ge\right)$ can be easily obtain by fixing three of the variables to one of their bounds. The exact procedures to obtain extreme points are given in Algorithms \ref{alg:extr cii}, \ref{alg:extr cij}, and \ref{alg:extr sij}
(see Appendix~\ref{app:extrpoint}).

After we have computed the extreme points of one-dimensional faces, say $z_{ij}^k$, $k=1,\dots,K$, where $z_{ij}:=(c_{ii},c_{jj}, c_{ij}, s_{ij})$ and $K\le 32$, we can give the convex hull description of $\mathcal{K}_{ij}^\ge$ as follows:
\begin{equation}
\text{conv}\left(\mathcal{K}_{ij}^\ge\right) = \text{conv}\biggl(\{z_{ij}^k\}_{k=1}^K\biggr) = \biggl\{z_{ij} : \exists \lambda,\; z_{ij} = \sum_{k=1}^K \lambda_k z_{ij}^k, \sum_{k=1}^K \lambda_k = 1, \lambda \ge 0 \biggr\}.
\end{equation}

The above discussion can be summarized as the following result:
%Finally, the above discussion proves the following proposition:
\begin{prop}\label{prop convex hull}
Let $z_{ij}^k$ be computed using Algorithms \ref{alg:extr cii}-\ref{alg:extr sij}. Then,
\begin{align}
\text{conv}(\mathcal{K}_{ij}^=) = \left\{z_{ij} \in \mathcal{D}_{ij} : \exists \lambda : c_{ij}^2+s_{ij}^2 \le c_{ii}c_{jj},  \ z_{ij} = \sum_{k=1}^K \lambda_k z_{ij}^k, \; \sum_{k=1}^K \lambda_k = 1, \lambda \ge 0 \right\}.\label{type 1 minor convexhull}
\end{align} 
\end{prop}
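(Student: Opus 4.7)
The plan is to combine the reverse-convex decomposition from Theorem~1 of \cite{Tawarmalani2016} with the Hillestad--Jacobsen polytope characterization (Theorem~\ref{thm:poly reverse}) and the explicit vertex enumeration carried out by Algorithms~\ref{alg:extr cii}--\ref{alg:extr sij}. First, I would split the defining equality of $\mathcal{K}_{ij}^=$ into its two complementary inequalities to obtain
\begin{equation*}
\text{conv}(\mathcal{K}_{ij}^=) \;=\; \mathcal{K}_{ij}^\le \,\cap\, \text{conv}(\mathcal{K}_{ij}^\ge),
\end{equation*}
as already stated in the excerpt. Since $\mathcal{K}_{ij}^\le$ is the intersection of the rotated second-order cone $c_{ij}^2+s_{ij}^2 \le c_{ii}c_{jj}$ with the box $\mathcal{D}_{ij}$, it is already convex and contributes to \eqref{type 1 minor convexhull} exactly via the conic inequality. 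Thus the only remaining work is to describe $\text{conv}(\mathcal{K}_{ij}^\ge)$.

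Next, I would instantiate Theorem~\ref{thm:poly reverse} with $P=\mathcal{D}_{ij}$ and $G=\{z_{ij}\in\mathbb{R}^4:c_{ij}^2+s_{ij}^2\ge c_{ii}c_{jj}\}$. The complement $\mathbb{R}^4\setminus G$ is the open rotated cone and hence convex, so the theorem yields that $\text{conv}(\mathcal{K}_{ij}^\ge)$ is a polytope and equals
\begin{equation*}
\text{conv}\Bigl(\bigcup_{l=1}^{32}\text{conv}(E_l\cap G)\Bigr),
\end{equation*}
where $E_1,\dots,E_{32}$ are the one-dimensional faces of the hypercube $\mathcal{D}_{ij}$. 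It therefore suffices to enumerate the extreme points of each $\text{conv}(E_l\cap G)$.

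Then I would observe that on any edge $E_l$ exactly three of the four coordinates in $(c_{ii},c_{jj},c_{ij},s_{ij})$ are pinned at one of their bounds, while the fourth varies in an interval. Substituting the fixed values into $c_{ij}^2+s_{ij}^2=c_{ii}c_{jj}$ reduces the constraint to a univariate quadratic (or linear) inequality in the free variable, so $E_l\cap G$ is at most a union of two subintervals. Its extreme points are therefore either the endpoints of $E_l$ or the roots of that univariate equation clipped to the bounds of the free variable. Organized by which coordinate is free, this case analysis is precisely what Algorithms~\ref{alg:extr cii}--\ref{alg:extr sij} carry out, producing the list $\{z_{ij}^k\}_{k=1}^{K}$ with $K\le 32$.

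Finally, taking the convex hull of these enumerated points gives the polytope description of $\text{conv}(\mathcal{K}_{ij}^\ge)$ via the $\lambda$-variables, and intersecting with $\mathcal{K}_{ij}^\le$ yields exactly the representation \eqref{type 1 minor convexhull}. The main technical obstacle is the step where one must verify that Algorithms~\ref{alg:extr cii}--\ref{alg:extr sij} enumerate \emph{all} extreme points without omission or duplication across the 32 edges; this reduces to a bookkeeping argument checking, for each triple of pinned bounds, whether the associated univariate quadratic has real roots that lie inside the free variable's range and, if so, adding them to the vertex list. Everything else is a direct consequence of the two cited convex-hull theorems.
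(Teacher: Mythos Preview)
Your proposal is correct and mirrors the paper's own argument essentially step for step: the paper also invokes the Tawarmalani decomposition $\text{conv}(\mathcal{K}_{ij}^=)=\mathcal{K}_{ij}^\le\cap\text{conv}(\mathcal{K}_{ij}^\ge)$, applies Theorem~\ref{thm:poly reverse} with $P=\mathcal{D}_{ij}$ and $G$ the reverse cone, enumerates the $32$ edges of the hypercube, and then defers the univariate case analysis to Algorithms~\ref{alg:extr cii}--\ref{alg:extr sij} to list the vertices $z_{ij}^k$. In fact the paper presents exactly this reasoning in the paragraphs preceding the proposition and then summarizes it as Proposition~\ref{prop convex hull}, so your write-up is a faithful expansion of the intended proof.
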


\paragraph{An Outer Approximation to $\text{conv}(\mathcal{K}_{ij}^=)$.} Note that Proposition \ref{prop convex hull} describes the convex hull of $\text{conv}(\mathcal{K}_{ij}^=)$  in an extended space of $(z,\lambda)$, and the dimension of the $\lambda$ variables, $K$, can be as large as $32$. Directly incorporating the full convex hull description \eqref{type 1 minor convexhull} for each edge in the power network could lead to a very large formulation. Instead, we propose an outer approximation to $\text{conv}(\mathcal{K}_{ij}^=)$ in the space of the $z$ variable using only four linear inequalities. Our extensive experiments show that this approximation is quite accurate and computationally efficient.

We again focus on $\text{conv}(\mathcal{K}_{ij}^\ge)$ and rewrite the ``reverse-cone" constraint as follows:
\begin{equation}
f(c_{ij},s_{ij}) := \sqrt{c_{ij}^2+s_{ij}^2} \ge \sqrt{c_{ii}c_{jj}} =: g(c_{ii}, c_{jj}).
\end{equation}
Note that $f$ is a convex function and $g$ is a concave function. If we overestimate the former and underestimate the latter by hyperplanes, the inequality still holds. The following propositions formalize this idea:

%[\underline c_{ii}, \overline c_{ii}]\times[\underline c_{jj}, \overline c_{jj}]\times[\underline c_{ij}, \overline c_{ij}]\times[\underline s_{ij}, \overline s_{ij}]

\begin{prop}\label{prop cc}
The affine functions $g_m(c_{ii},c_{jj}):=\nu_{ij}^m+\eta_{ij}^m c_{ii} + \delta_{ij}^m c_{jj}$, $m=1,2$, underestimate $\sqrt{c_{ii}c_{jj}}$ over the box $[\underline c_{ii}, \overline c_{ii}]\times[\underline c_{jj}, \overline c_{jj}]$, where

$\eta_{ij}^1 = \frac{\sqrt{\underline c_{ii}}}{\sqrt{\underline c_{jj}}+\sqrt{\overline c_{jj}}}$,
$\delta_{ij}^1 = \frac{\sqrt{\underline c_{jj}}}{\sqrt{\underline c_{ii}}+\sqrt{\overline c_{ii}}}$,
$\nu_{ij}^1 = \sqrt{ \underline c_{ii} \underline c_{jj}} - \eta_{ij}^1 \underline c_{ii} - \delta_{ij}^1 \underline c_{jj}$
and

$\eta_{ij}^2 = \frac{\sqrt{\overline c_{ii}}}{\sqrt{\underline c_{jj}}+\sqrt{\overline c_{jj}}}$,
$\delta_{ij}^2 = \frac{\sqrt{\overline c_{jj}}}{\sqrt{\underline c_{ii}}+\sqrt{\overline c_{ii}}}$,
$\nu_{ij}^2 = \sqrt{ \overline c_{ii} \overline c_{jj}} - \eta_{ij}^2 \overline c_{ii} - \delta_{ij}^2 \overline c_{jj}$.
\end{prop}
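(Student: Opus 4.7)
The plan is to exploit the concavity of $h(x,y) := \sqrt{xy}$ on $(0,\infty)^2$, which follows from a direct Hessian computation. Because the domain $\mathcal{R} := [\underline c_{ii},\overline c_{ii}]\times[\underline c_{jj},\overline c_{jj}]$ is a rectangle (in particular a polytope), this concavity reduces the proposition to a check at the four vertices of $\mathcal{R}$ via the following elementary fact: an affine function $g$ lies weakly below a concave function $h$ on a polytope $P$ if and only if $g(v)\le h(v)$ at every vertex $v$ of $P$. The forward direction is immediate; for the reverse, write any $(x,y)\in\mathcal{R}$ as a convex combination $\sum_k\lambda_k v_k$ of vertices, and apply concavity of $h$ together with affinity of $g$ to get $h(x,y)\ge\sum_k\lambda_k h(v_k)\ge\sum_k\lambda_k g(v_k)=g(x,y)$.

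With this reduction in hand, the proof of $g_1\le\sqrt{c_{ii}c_{jj}}$ reduces to four scalar checks. At the vertex $(\underline c_{ii},\underline c_{jj})$, the choice of $\nu_{ij}^1$ is designed precisely so that $g_1(\underline c_{ii},\underline c_{jj})=\sqrt{\underline c_{ii}\,\underline c_{jj}}$, and the inequality holds with equality. For the remaining three vertices I would introduce the shorthand $\alpha=\sqrt{\underline c_{ii}}$, $\beta=\sqrt{\overline c_{ii}}$, $\gamma=\sqrt{\underline c_{jj}}$, $\delta=\sqrt{\overline c_{jj}}$, substitute the closed-form expressions for $\eta_{ij}^1,\delta_{ij}^1,\nu_{ij}^1$ into $g_1$, and clear denominators. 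Each of the three inequalities then becomes a polynomial inequality in $\alpha,\beta,\gamma,\delta$ that one aims to factor into a product of manifestly nonnegative pieces -- typically $(\beta-\alpha)$, $(\delta-\gamma)$, and an AM--GM-type residual of the form $u+v-2\sqrt{uv}=(\sqrt u-\sqrt v)^2\ge 0$.

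For $g_2$ the argument is completely symmetric: the constant $\nu_{ij}^2$ is chosen so that $g_2(\overline c_{ii},\overline c_{jj})=\sqrt{\overline c_{ii}\,\overline c_{jj}}$, and the remaining three vertex inequalities mirror those for $g_1$ with the roles of the lower and upper bounds interchanged. The main obstacle in executing the plan is the algebra at the three non-matching vertices: producing a sufficiently clean factorization so that nonnegativity becomes visible without tedious case analysis. The specific coefficients $\eta_{ij}^m,\delta_{ij}^m$ appear to be engineered precisely so that the remainders at the ``mixed'' vertices $(\underline c_{ii},\overline c_{jj})$ and $(\overline c_{ii},\underline c_{jj})$, as well as at the diagonally opposite vertex, admit such a factorization, and the bulk of the proof is carrying that factorization out cleanly.
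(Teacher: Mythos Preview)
Your strategy is sound and is the natural one: $\sqrt{xy}$ is concave on the positive orthant, so an affine minorant on a rectangle need only be checked at the four vertices. The paper states this proposition without proof, so there is nothing to compare against on the level of argument.

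However, when you actually carry out the vertex checks you will find that the proposition \emph{as stated} is false: the indices in the numerators of $\eta_{ij}^m$ and $\delta_{ij}^m$ are swapped. Concretely, take $[\underline c_{ii},\overline c_{ii}]=[1,4]$ and $[\underline c_{jj},\overline c_{jj}]=[9,16]$. The stated formulas give $\eta_{ij}^1=\tfrac{1}{7}$, $\delta_{ij}^1=1$, $\nu_{ij}^1=-\tfrac{43}{7}$, and then $g_1(1,16)=10>4=\sqrt{1\cdot 16}$, so $g_1$ is not an underestimator. The intended coefficients are
\[
\eta_{ij}^1=\frac{\sqrt{\underline c_{jj}}}{\sqrt{\underline c_{ii}}+\sqrt{\overline c_{ii}}},\qquad
\delta_{ij}^1=\frac{\sqrt{\underline c_{ii}}}{\sqrt{\underline c_{jj}}+\sqrt{\overline c_{jj}}},
\]
and similarly for $m=2$ with lower bounds replaced by upper bounds. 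With this correction, writing $\alpha=\sqrt{\underline c_{ii}}$, $\beta=\sqrt{\overline c_{ii}}$, $\gamma=\sqrt{\underline c_{jj}}$, $\delta=\sqrt{\overline c_{jj}}$, one finds that $g_1$ matches $\sqrt{c_{ii}c_{jj}}$ \emph{exactly} at the three vertices $(\alpha^2,\gamma^2)$, $(\beta^2,\gamma^2)$, $(\alpha^2,\delta^2)$, and at the fourth vertex $(\beta^2,\delta^2)$ the deficit is precisely $(\beta-\alpha)(\delta-\gamma)\ge 0$. The case $m=2$ is symmetric. So your plan works cleanly once the typo is fixed; no AM--GM residual is needed, and the ``factorization'' at the off-vertex is the single product $(\beta-\alpha)(\delta-\gamma)$.
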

\begin{prop}\label{prop cs1}
If $\sqrt{\overline c_{ij}^2 + \overline s_{ij}^2}+\sqrt{\underline c_{ij}^2 + \underline s_{ij}^2}-\sqrt{\overline c_{ij}^2 + \underline s_{ij}^2}-\sqrt{\underline c_{ij}^2 + \overline s_{ij}^2} < 0$, then the affine functions  $f_n(c_{ij}, s_{ij}):=\nu_{ij}^n+\eta_{ij}^n c_{ij} + \delta_{ij}^l s_{ij}$, $n=3,4$, overestimate $ \sqrt{c_{ij}^2+s_{ij}^2}$ over the box $[\underline c_{ij}, \overline c_{ij}]\times[\underline s_{ij}, \overline s_{ij}]$, where

$\eta_{ij}^3 = \frac{ \sqrt{\overline c_{ij}^2 + \underline s_{ij}^2} - \sqrt{\underline c_{ij}^2 + \underline s_{ij}^2} }{ \overline c_{ij} - \underline c_{ij}}$,
$\delta_{ij}^3 = \frac{ \sqrt{ \underline c_{ij}^2 + \overline s_{ij}^2} - \sqrt{\underline c_{ij}^2 + \underline s_{ij}^2} }{ \overline s_{ij} - \underline s_{ij}}$,
$\nu_{ij}^3 = \sqrt{\underline c_{ij}^2 + \underline s_{ij}^2} - \eta_{ij}^3 \underline c_{ij} - \delta_{ij}^3 \underline s_{ij}$
and

$\eta_{ij}^4 = \frac{ \sqrt{\overline c_{ij}^2 + \overline s_{ij}^2} - \sqrt{\underline c_{ij}^2 + \overline s_{ij}^2} }{ \overline c_{ij} - \underline c_{ij}}$,
$\delta_{ij}^4 = \frac{ \sqrt{ \overline c_{ij}^2 + \overline s_{ij}^2} - \sqrt{\overline c_{ij}^2 + \underline s_{ij}^2} }{ \overline s_{ij} - \underline s_{ij}}$,
$\nu_{ij}^4 = \sqrt{\overline c_{ij}^2 + \overline s_{ij}^2} - \eta_{ij}^4 \overline c_{ij} - \delta_{ij}^4 \overline s_{ij}$.
\end{prop}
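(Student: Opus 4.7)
The plan is to exploit the convexity of $h(c_{ij}, s_{ij}) := \sqrt{c_{ij}^2 + s_{ij}^2}$, which is convex on $\mathbb{R}^2$ since it is a norm. Given that $f_n$ is affine, the function $h - f_n$ is convex on the rectangle $R := [\underline c_{ij}, \overline c_{ij}] \times [\underline s_{ij}, \overline s_{ij}]$, and the maximum of a convex function over a compact polytope is attained at a vertex. Hence $f_n \ge h$ on all of $R$ if and only if $f_n \ge h$ at each of the four corners of $R$, which reduces the claim to checking four inequalities.

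For $f_3$, I would verify by direct substitution that the stated coefficients are precisely those that force $f_3$ to interpolate $h$ at the three corners $(\underline c_{ij}, \underline s_{ij})$, $(\overline c_{ij}, \underline s_{ij})$, and $(\underline c_{ij}, \overline s_{ij})$. Indeed, $\eta_{ij}^3$ and $\delta_{ij}^3$ are the finite differences of $h$ along the two coordinate edges emanating from $(\underline c_{ij}, \underline s_{ij})$, and $\nu_{ij}^3$ is the additive constant that enforces equality at this corner. Therefore $h - f_3 = 0$ at these three vertices, which automatically gives $f_3 \ge h$ there.

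The only nontrivial inequality is at the remaining vertex $(\overline c_{ij}, \overline s_{ij})$. A short telescoping calculation yields
\[
f_3(\overline c_{ij}, \overline s_{ij}) = \sqrt{\underline c_{ij}^2 + \underline s_{ij}^2} + \eta_{ij}^3(\overline c_{ij} - \underline c_{ij}) + \delta_{ij}^3(\overline s_{ij} - \underline s_{ij}) = \sqrt{\overline c_{ij}^2 + \underline s_{ij}^2} + \sqrt{\underline c_{ij}^2 + \overline s_{ij}^2} - \sqrt{\underline c_{ij}^2 + \underline s_{ij}^2}.
\]
Requiring $f_3(\overline c_{ij}, \overline s_{ij}) \ge \sqrt{\overline c_{ij}^2 + \overline s_{ij}^2}$ rearranges exactly to the hypothesis of the proposition (the strict inequality in the hypothesis is in fact stronger than the non-strict inequality needed here). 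Combined with the vertex principle, this yields $f_3 \ge h$ on all of $R$.

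The argument for $f_4$ is entirely symmetric: the coefficients $\eta_{ij}^4$, $\delta_{ij}^4$, $\nu_{ij}^4$ are chosen so that $f_4$ interpolates $h$ at the three corners $(\overline c_{ij}, \overline s_{ij})$, $(\underline c_{ij}, \overline s_{ij})$, and $(\overline c_{ij}, \underline s_{ij})$, and an analogous telescoping at the remaining corner $(\underline c_{ij}, \underline s_{ij})$ produces the same combination of square roots, so the same hypothesis delivers the required inequality. The main obstacle is purely bookkeeping: carrying out the two telescopings cleanly and confirming that they reproduce the exact expression appearing in the proposition's hypothesis, for both $n = 3$ and $n = 4$. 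There is nothing analytic to worry about beyond the convexity of $\|\cdot\|_2$.
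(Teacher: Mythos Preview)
Your argument is correct, and it is precisely the argument the paper has in mind: the paper states this proposition without proof, merely noting beforehand that ``$f$ is a convex function \ldots\ If we overestimate the former \ldots\ by hyperplanes, the inequality still holds.'' Your vertex-checking via convexity of the Euclidean norm, together with the observation that $f_3$ and $f_4$ each interpolate $h$ at three corners and that the hypothesis is exactly the fourth-corner inequality, fills in all the details the paper omits.
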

\begin{prop}\label{prop cs2}
If $\sqrt{\overline c_{ij}^2 + \overline s_{ij}^2}+\sqrt{\underline c_{ij}^2 + \underline s_{ij}^2}-\sqrt{\overline c_{ij}^2 + \underline s_{ij}^2}-\sqrt{\underline c_{ij}^2 + \overline s_{ij}^2} > 0$, then the affine functions  $f_n(c_{ij},s_{ij}):=\nu_{ij}^n+\eta_{ij}^n c_{ij} + \delta_{ij}^n s_{ij}$, $n=3,4$, overestimate $ \sqrt{c_{ij}^2+s_{ij}^2}$ over the box $[\underline c_{ij}, \overline c_{ij}]\times[\underline s_{ij}, \overline s_{ij}]$, where

$\eta_{ij}^3 = \frac{ \sqrt{\overline c_{ij}^2 + \overline s_{ij}^2} - \sqrt{\underline c_{ij}^2 + \overline s_{ij}^2} }{ \overline c_{ij} - \underline c_{ij}}$,
$\delta_{ij}^3 = \frac{ \sqrt{ \underline c_{ij}^2 + \overline s_{ij}^2} - \sqrt{\underline c_{ij}^2 + \underline s_{ij}^2} }{ \overline s_{ij} - \underline s_{ij}}$,
$\nu_{ij}^3 = \sqrt{\underline c_{ij}^2 + \underline s_{ij}^2} - \eta_{ij}^3 \underline c_{ij} - \delta_{ij}^3 \underline s_{ij}$
and

$\eta_{ij}^4 = \frac{ \sqrt{\overline c_{ij}^2 + \underline s_{ij}^2} - \sqrt{\underline c_{ij}^2 + \underline s_{ij}^2} }{ \overline c_{ij} - \underline c_{ij}}$,
$\delta_{ij}^4 = \frac{ \sqrt{ \overline c_{ij}^2 + \overline s_{ij}^2} - \sqrt{\overline c_{ij}^2 + \underline s_{ij}^2} }{ \overline s_{ij} - \underline s_{ij}}$,
$\nu_{ij}^4 = \sqrt{\overline c_{ij}^2 + \overline s_{ij}^2} - \eta_{ij}^4 \overline c_{ij} - \delta_{ij}^4 \overline s_{ij}$.
\end{prop}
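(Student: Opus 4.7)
}
The plan is to exploit convexity of $f(c,s) = \sqrt{c^2+s^2}$ and the fact that each $f_n$ is determined by its values at three corners of the rectangle $R := [\underline c_{ij}, \overline c_{ij}]\times[\underline s_{ij}, \overline s_{ij}]$. First, I would note the following general principle: if $\ell : \mathbb{R}^2 \to \mathbb{R}$ is affine and $f : \mathbb{R}^2 \to \mathbb{R}$ is convex, then the function $f - \ell$ is convex, hence its maximum over the compact convex set $R$ is attained at an extreme point of $R$, i.e., at one of the four corners. Consequently, to establish $f_n(c,s) \ge f(c,s)$ throughout $R$, it suffices to verify the inequality at all four corners of $R$.

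Next, I would verify by direct substitution that $f_3$ interpolates $f$ exactly at the three corners $(\underline c_{ij}, \underline s_{ij})$, $(\overline c_{ij}, \underline s_{ij})$, and $(\overline c_{ij}, \overline s_{ij})$, and that $f_4$ interpolates $f$ at $(\underline c_{ij}, \underline s_{ij})$, $(\underline c_{ij}, \overline s_{ij})$, and $(\overline c_{ij}, \overline s_{ij})$. These checks follow directly from the definitions of $\nu_{ij}^n, \eta_{ij}^n, \delta_{ij}^n$: in each case the expressions for $\eta$ and $\delta$ are designed as first differences along the two coordinate directions emanating from the interpolation corners. This is routine arithmetic and I would not grind through it in detail.

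The heart of the argument is the fourth corner. For $f_3$, the only corner at which interpolation does not hold is $(\underline c_{ij}, \overline s_{ij})$. Computing $f_3(\underline c_{ij}, \overline s_{ij})$ using the slopes given, one finds
\begin{equation*}
f_3(\underline c_{ij}, \overline s_{ij}) - f(\underline c_{ij}, \overline s_{ij}) = \sqrt{\overline c_{ij}^2 + \overline s_{ij}^2} + \sqrt{\underline c_{ij}^2 + \underline s_{ij}^2} - \sqrt{\overline c_{ij}^2 + \underline s_{ij}^2} - \sqrt{\underline c_{ij}^2 + \overline s_{ij}^2},
\end{equation*}
and an analogous computation for $f_4$ at its fourth corner $(\overline c_{ij}, \underline s_{ij})$ yields exactly the same expression. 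The hypothesis of the proposition is precisely that this quantity is nonnegative, so $f_n \ge f$ at the fourth corner as well, completing the argument via the convexity principle stated at the outset.

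I do not expect any serious obstacle; the main care is bookkeeping---keeping straight which triple of corners defines $f_3$ versus $f_4$ and confirming that the sign in the diagonal-difference hypothesis aligns with choosing the diagonal from $(\underline c_{ij}, \underline s_{ij})$ to $(\overline c_{ij}, \overline s_{ij})$, which is the opposite diagonal to the one used in Proposition \ref{prop cs1}. Geometrically, the hypothesis simply selects which of the two triangulations of $R$ (along which diagonal) yields two triangles on whose respective vertices the affine interpolants dominate $f$ everywhere on $R$.
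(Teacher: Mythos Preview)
The paper states Proposition~\ref{prop cs2} without proof, so there is nothing to compare against; your approach is the natural one and it is correct. The key observation---that $f-\ell$ is convex for any affine $\ell$, so its maximum on the rectangle is attained at a corner---reduces the verification to the four vertices, three of which are interpolation points and the fourth of which yields exactly the hypothesis expression.

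One minor bookkeeping correction (which you correctly anticipated might be the delicate part): you have the interpolation triples for $f_3$ and $f_4$ swapped. From the given coefficients, $\eta_{ij}^3$ is the secant slope in $c$ along $s=\overline s_{ij}$ and $\delta_{ij}^3$ is the secant slope in $s$ along $c=\underline c_{ij}$, so $f_3$ interpolates $f$ at $(\underline c_{ij},\underline s_{ij})$, $(\underline c_{ij},\overline s_{ij})$, $(\overline c_{ij},\overline s_{ij})$ and its fourth-corner check is at $(\overline c_{ij},\underline s_{ij})$; correspondingly $f_4$ interpolates at $(\underline c_{ij},\underline s_{ij})$, $(\overline c_{ij},\underline s_{ij})$, $(\overline c_{ij},\overline s_{ij})$ with fourth corner $(\underline c_{ij},\overline s_{ij})$. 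This swap does not affect the conclusion, since in both cases the fourth-corner difference evaluates to the same symmetric expression
\[
\sqrt{\overline c_{ij}^2+\overline s_{ij}^2}+\sqrt{\underline c_{ij}^2+\underline s_{ij}^2}-\sqrt{\overline c_{ij}^2+\underline s_{ij}^2}-\sqrt{\underline c_{ij}^2+\overline s_{ij}^2},
\]
which is positive by hypothesis.
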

\begin{prop}
Let $\nu_{ij}^m$, $\eta_{ij}^m$ and $\delta_{ij}^m$, $m=1,2$ and $\nu_{ij}^n$, $\eta_{ij}^n$ and $\delta_{ij}^n$, $n=3,4$ be calculated using Propositions \ref{prop cc}-\ref{prop cs2}. Then, the edge cuts (EC) defined as
\begin{equation}\label{edge cut 4 together}
\text{EC}_{ij}(\underline c, \overline c,\underline s,\overline s): \ 
\nu_{ij}^n+\eta_{ij}^n c_{ij} + \delta_{ij}^n s_{ij} \ge \nu_{ij}^m+\eta_{ij}^m c_{ii} + \delta_{ij}^m c_{jj}, \ m=1,2 \text{ and } n=3,4
\end{equation}
are valid for $\text{conv}(\mathcal{K}_{ij}^=)$.
\end{prop}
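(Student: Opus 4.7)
The plan is to reduce validity for $\text{conv}(\mathcal{K}_{ij}^=)$ to validity for the (nonconvex) set $\mathcal{K}_{ij}^=$ itself, and then exploit the defining equation $c_{ij}^2+s_{ij}^2=c_{ii}c_{jj}$ together with the one-sided linear approximations provided by Propositions \ref{prop cc}, \ref{prop cs1}, and \ref{prop cs2}. Since the inequalities in \eqref{edge cut 4 together} are affine, validity over $\mathcal{K}_{ij}^=$ automatically implies validity over $\text{conv}(\mathcal{K}_{ij}^=)$. Therefore it suffices to fix an arbitrary point $(c_{ii},c_{jj},c_{ij},s_{ij})\in\mathcal{K}_{ij}^=$ and verify each of the four inequalities.

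First I would invoke Proposition \ref{prop cc} to conclude, for $m=1,2$, that
\[
\nu_{ij}^m+\eta_{ij}^m c_{ii}+\delta_{ij}^m c_{jj} \;\le\; \sqrt{c_{ii}c_{jj}}
\]
over the box $[\underline c_{ii},\overline c_{ii}]\times[\underline c_{jj},\overline c_{jj}]$. Next, depending on the sign of $\sqrt{\overline c_{ij}^2+\overline s_{ij}^2}+\sqrt{\underline c_{ij}^2+\underline s_{ij}^2}-\sqrt{\overline c_{ij}^2+\underline s_{ij}^2}-\sqrt{\underline c_{ij}^2+\overline s_{ij}^2}$, I would apply either Proposition \ref{prop cs1} or Proposition \ref{prop cs2} to conclude, for $n=3,4$, that
\[
\nu_{ij}^n+\eta_{ij}^n c_{ij}+\delta_{ij}^n s_{ij} \;\ge\; \sqrt{c_{ij}^2+s_{ij}^2}
\]
over the box $[\underline c_{ij},\overline c_{ij}]\times[\underline s_{ij},\overline s_{ij}]$. (The borderline case in which that expression vanishes can be handled by either proposition, since the two families of overestimators coincide with the tangent affine functions at opposite vertices.)

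Chaining the two bounds using the membership relation $c_{ij}^2+s_{ij}^2=c_{ii}c_{jj}$ that defines $\mathcal{K}_{ij}^=$, I obtain
\[
\nu_{ij}^n+\eta_{ij}^n c_{ij}+\delta_{ij}^n s_{ij} \;\ge\; \sqrt{c_{ij}^2+s_{ij}^2} \;=\; \sqrt{c_{ii}c_{jj}} \;\ge\; \nu_{ij}^m+\eta_{ij}^m c_{ii}+\delta_{ij}^m c_{jj}
\]
for every $m\in\{1,2\}$ and $n\in\{3,4\}$, which is precisely \eqref{edge cut 4 together}. Since the point in $\mathcal{K}_{ij}^=$ was arbitrary and the inequality is affine, the inequality extends to all of $\text{conv}(\mathcal{K}_{ij}^=)$, completing the argument.

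The only real obstacle is bookkeeping: one must make sure the hypotheses of Propositions \ref{prop cs1} and \ref{prop cs2} between them cover all possible boxes, and that both overestimators produced actually dominate $\sqrt{c_{ij}^2+s_{ij}^2}$ on the entire rectangle (not merely at its vertices). Granting the correctness of those earlier propositions, the current statement follows directly from the simple ``overestimate $\ge$ value $=$ value $\ge$ underestimate'' chain described above; no further convex-analytic machinery is needed.
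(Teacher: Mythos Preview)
Your proposal is correct and is essentially the argument the paper intends: immediately before Proposition~\ref{prop cc} the paper sets up exactly this ``overestimate $f\ge$ value $\ge$ underestimate $g$'' chain, and the proposition in question is stated without further proof as the direct consequence of Propositions~\ref{prop cc}--\ref{prop cs2}. The only cosmetic difference is that the paper phrases the argument on $\mathcal{K}_{ij}^{\ge}$ (where $f\ge g$ holds as an inequality) rather than on $\mathcal{K}_{ij}^{=}$, but since $\mathcal{K}_{ij}^{=}\subseteq\mathcal{K}_{ij}^{\ge}$ and the cuts are affine, the two routes are equivalent.
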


A different analysis related to Type 1 minor condition is carried out in \cite{Chen2015Report} by considering the following set:
\begin{equation}
\begin{split}
\mathcal{K}_{ij}' =  \{(c_{ii},c_{jj}, c_{ij}, s_{ij}): c_{ij}^2+s_{ij}^2  = c_{ii}c_{jj}, 
(c_{ii},c_{jj}) \in [\underline c_{ii}, \overline c_{ii}]\times[\underline c_{jj}, \overline c_{jj}], \\ c_{ij} \tan\underline \theta_{ij} \le s_{ij} \le c_{ij}\tan\overline \theta_{ij} \}.
\end{split}
\end{equation}
It turns out that conv$(\mathcal{K}_{ij}')$ is second-order cone representable with  two non-trivial linear inequalities. In our experiments, we observe that the addition of these valid linear inequalities does not produce any extra gap closure in our approach, and hence, they are not used in our relaxation scheme.

\subsubsection{Types 2 and 3: 3- and 4-Cycle Minors}
\label{type 2 and 3, cycle relax}
The real and imaginary parts of Type 2 and Type  3 minors in \eqref{type 2 minor} and \eqref{type 3 minor} can be written generically as $ \sum_{i=1}^N a_i x_i y_i = 0$ for some  $a \in \mathbb{R}^N $ with $a_i \neq 0$. Let $\underline x, \overline x, \underline y, \overline y$ be $N$-vectors with the property that $\underline x < \overline x $ and $\underline y < \overline y$.
We are interested in finding the convex hull of the following set:
\begin{equation} 
\mathcal{S}_a = \bigg \{(x,y) \in \mathbb{R}^N\times\mathbb{R}^N :  \sum_{i=1}^N a_i x_i y_i = 0, \  \underline x \le x \le \overline x, \ \underline y \le y \le \overline y \bigg \}.
\end{equation}

We have the following theorem, whose proof is given in Appendix \ref{app:main theorem 1 proof}:
\begin{thm}\label{main theorem 1}
{conv}$(\mathcal{S}_a)$ is second-order cone representable.
\end{thm}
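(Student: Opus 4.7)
The plan is to linearize the single bilinear equality by lifting. Introduce auxiliary variables $w_i$ and consider
\[
\widetilde{\mathcal{S}}_a = \Bigl\{ (x,y,w) \in \mathbb{R}^{3N} : w_i = a_i x_i y_i,\ (x_i,y_i) \in B_i \text{ for all } i,\ \textstyle\sum_i w_i = 0 \Bigr\},
\]
where $B_i = [\underline{x}_i, \overline{x}_i] \times [\underline{y}_i, \overline{y}_i]$. Since linear projection commutes with taking convex hull, $\mathrm{conv}(\mathcal{S}_a) = \pi_{x,y}(\mathrm{conv}(\widetilde{\mathcal{S}}_a))$, so it suffices to represent $\mathrm{conv}(\widetilde{\mathcal{S}}_a)$ in the lifted space.

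Next I would invoke the classical McCormick envelope: for each index $i$, the convex hull of the single-term set $T_i = \{(x_i, y_i, w_i) : w_i = a_i x_i y_i,\ (x_i, y_i) \in B_i\}$ is a polytope $M_i \subset \mathbb{R}^3$ given by four linear secant inequalities, whose four extreme points are precisely the lifted corners of $B_i$. Because the bilinear terms involve pairwise disjoint variable pairs, $\mathrm{conv}(\prod_i T_i) = \prod_i M_i$. The key equality I would establish is
\[
\mathrm{conv}(\widetilde{\mathcal{S}}_a) = \Bigl(\prod_{i=1}^N M_i\Bigr) \cap \Bigl\{\textstyle\sum_i w_i = 0\Bigr\},
\]
where ``$\subseteq$'' is immediate and ``$\supseteq$'' is obtained by writing each block $(\bar x_i, \bar y_i, \bar w_i)$ as a McCormick convex combination of lifted corners and then redistributing the resulting product distribution across the index blocks so that individual atoms land on the hyperplane while the marginals $(\bar x, \bar y, \bar w)$ are preserved. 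Once this equality is in hand, $\mathrm{conv}(\widetilde{\mathcal{S}}_a)$ is the intersection of a polytope with a single hyperplane, hence a polytope; Fourier--Motzkin elimination then yields that its linear projection $\mathrm{conv}(\mathcal{S}_a)$ is a polyhedron, which is trivially second-order cone representable.

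The main obstacle is the redistribution step: a naive product distribution of McCormick corners matches the marginals but its individual atoms generically violate $\sum_i w_i = 0$, so one must carefully exploit the independence of the index blocks to swap weight between atoms on opposite sides of the hyperplane without disturbing the marginals $(\bar x, \bar y, \bar w)$. A cleaner alternative, which I would fall back on, is to characterize the extreme points of $\bigl(\prod_i M_i\bigr) \cap H$ directly by a dimension count: at any such point, enough McCormick facets must be tight to force all but possibly one index to sit at a box corner — where the bilinear equality $w_i = a_i x_i y_i$ holds automatically — and the single hyperplane equation then pins down the remaining $w_{i^\star}$ to a value consistent with $w_{i^\star} = a_{i^\star} x_{i^\star} y_{i^\star}$. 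This shows every extreme point lies in $\widetilde{\mathcal{S}}_a$, from which the equality and the SOC representability of $\mathrm{conv}(\mathcal{S}_a)$ both follow.
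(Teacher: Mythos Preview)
Your central equality $\mathrm{conv}(\widetilde{\mathcal{S}}_a)=\bigl(\prod_i M_i\bigr)\cap H$ is false in general, and with it both the redistribution argument and the dimension-count fallback collapse. The flaw in the dimension count is concrete: at an extreme point of $\bigl(\prod_i M_i\bigr)\cap H$ the ``free'' block $i^\star$ need only lie on an \emph{edge} of the McCormick tetrahedron $M_{i^\star}$, and two of those six edges are diagonals joining opposite lifted corners, e.g.\ $(\underline x,\underline y,a\underline x\,\underline y)$ to $(\overline x,\overline y,a\overline x\,\overline y)$. Along a diagonal, $w_{i^\star}$ is linear while $a_{i^\star}x_{i^\star}y_{i^\star}$ is quadratic, so $w_{i^\star}\neq a_{i^\star}x_{i^\star}y_{i^\star}$ strictly between the endpoints. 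Your sentence ``the single hyperplane equation then pins down the remaining $w_{i^\star}$ to a value consistent with $w_{i^\star}=a_{i^\star}x_{i^\star}y_{i^\star}$'' is exactly where this breaks: $H$ pins $w_{i^\star}$, but nothing forces it to equal the bilinear value.

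A small instance makes this explicit. Take $N=2$, $a=(1,-1)$, $(x_1,y_1)\in[1,2]^2$, $(x_2,y_2)\in[\sqrt2-\epsilon,\sqrt2+\epsilon]^2$. Projecting $\mathcal{S}_a$ onto $(x_1,y_1)$ gives $\{(x_1,y_1)\in[1,2]^2:\ (\sqrt2-\epsilon)^2\le x_1y_1\le(\sqrt2+\epsilon)^2\}$, whose convex hull retains the curved boundary $x_1y_1=(\sqrt2-\epsilon)^2$. Since linear projections of polytopes are polytopes, $\mathrm{conv}(\mathcal{S}_a)$ itself is not polyhedral here, so no purely McCormick description can be exact. (In the same spirit, for $[1,2]^4$ the point $(x_1,y_1,w_1,x_2,y_2,w_2)=(4/3,4/3,2,1,2,-2)$ is a vertex of $\bigl(M_1\times M_2\bigr)\cap H$ sitting on the diagonal edge of $M_1$, with $w_1=2\neq 16/9=x_1y_1$.)

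The paper's proof shares your structural intuition but keeps the nonlinearity. It analyzes extreme points of $\mathcal{S}_a$ directly (no lift) and shows that at most one index $k$ can have $(x_k,y_k)$ off a box corner; that free pair then lies on a hyperbola arc $x_ky_k=\alpha$ for a constant $\alpha$ determined by the other corners. The convex hull of such an arc over a box genuinely needs the SOC constraint $x_ky_k\ge\alpha$ (or $\le\alpha$), and $\mathrm{conv}(\mathcal{S}_a)$ is obtained as the convex hull of a finite union of these SOC-representable pieces via a disjunctive formulation. Your McCormick lift replaces each hyperbola piece by its secant and thereby overshoots the convex hull.
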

In the proof of Theorem \ref{main theorem 1}, we construct the convex hull of $\text{conv}(S_a)$  in an extended space with the number of disjunctions exponential in $N$, where some of the disjunctions may contain second-order conic constraints. Including the exact description of $\text{conv}(S_a)$ as part of the relaxation might be quite costly. In the following, we propose tight outer approximation of $\text{conv}(S_a)$.

\paragraph{An Outer Approximation to $\text{conv}(S_a)$.} %In the proof of Theorem \ref{main theorem 1}, we construct the convex hull of $\text{conv}(S_a)$  in an extended space with exponentially many disjunctions in $N$, where some of the descriptions may contain second-order cones. Therefore, including $\text{conv}(S_a)$  as part of the relaxation might be quite costly.  Instead, 
%Similar to the convex hull of Type 1 minors \eqref{type 1 minor convexhull}, {conv}$(\mathcal{S}_a)$ is described in an extended space with dimension exponential in $N$. Hence, including the entire description of $\text{conv}(S_a)$  as part of the relaxation might be quite costly. Instead, 
We propose a linear outer approximation to $\text{conv}(S_a)$ using McCormick envelopes  \cite{mccormick} as follows:
\begin{equation} 
\begin{split} 
\mathcal{S}_a^M &= \{(x,y) \in \mathbb{R}^N\times\mathbb{R}^N : \exists w \in \mathbb{R}^N : \sum_{i=1}^N a_i w_i = 0, \\ & \quad \  \max\{\underline y x + \underline x y - \underline x \underline y, \overline y x + \overline x y - \overline x \overline y \} \le w \le \min\{\underline y x + \overline x y - \overline x \underline y, \overline y x + \underline x y - \underline x \overline y \}
 \}.
\end{split}
\end{equation}
Our extensive tests show that $\mathcal{S}_a^M$ tightly approximates conv$(\mathcal{S}_a)$.

%As shown in, the convex hull of the set $\{(x,y,w) : w=xy, \; (x,y) \in [\underline x, \overline x]\times[\underline y, \overline y]\}$ is given by
%\begin{equation*}
%\left\{ (x,y,w) :  \max\{\underline y x + \underline x y - \underline x \underline y, \overline y x + \overline x y - \overline x \overline y \} \le w \le \min\{\underline y x + \overline x y - \overline x \underline y, \overline y x + \underline x y - \underline x \overline y \} \right\},
%\end{equation*}
%which we denote as $M(w=xy)$

\paragraph{A Cycle Based Relaxation} Although we have linear outer approximations of Type 2 and 3 minors, their total number is cubic and quartic in the number of buses. Therefore, including relaxations for every minor can be quite expensive. Instead, we focus on a set of cycles in the graph and include a subset of minors for each cycle. This corresponds to triangulating a given cycle into 3- and 4-cycles. This idea is similar to the one used in our previous paper \cite{kocuk2015} although the construction in that paper is entirely different.

We do not propose to include all such minors in our relaxation scheme either. Rather, we construct a cycle relaxation and use it as a basis to generate cutting planes. More precisely, let $C$ be a given cycle in the power network and $X^C$ denote the principal submatrix of $X$, which corresponds to the rows and columns of the nodes in the cycle $C$. Next, we choose a subset of Type 2 and 3 minors from $X^C$ and define the following set
\begin{equation}
\mathcal{Q}_C = \{(c,s): \exists (\tilde c, \tilde s) : q_k(c,s,\tilde c, \tilde s)=0, \ k \in \mathcal{K}_C\},
\end{equation}
where $q_k$'s  are bilinear equations corresponding to minor constraints \eqref{type 2 minor cs} and \eqref{type 3 minor cs} index by $\mathcal{K}_C$. Here, we denote the variables associated with original edge $(i,j)$ in the power network as $c_{ij},s_{ij}$, and denote  $\tilde c_{ij}, \tilde s_{ij}$ for an artificial edge added to triangulate the cycle $C$. See Figure \ref{triangulation example} for an illustration on how a 7-cycle can be triangulated.

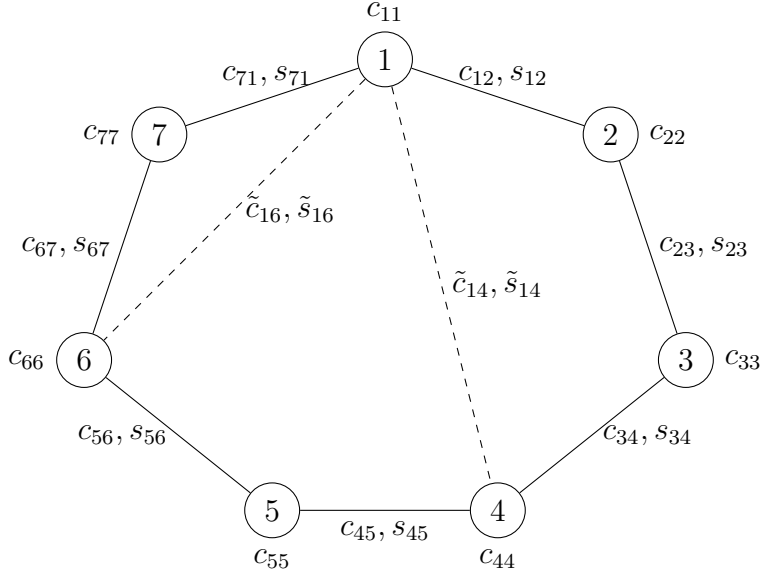
\begin{figure*}[!htp]
\centering
\begin{tikzpicture}
    \node[shape=circle,draw=black, label = {$c_{11}$}] (A) at (0,0) {1};
    \node[shape=circle,draw=black, label = right:{$c_{22}$}] (B) at (3,-1) {2};
    \node[shape=circle,draw=black, label = right:{$c_{33}$}] (C) at (4,-4) {3};
    \node[shape=circle,draw=black, label = below:{$c_{44}$}] (D) at (1.5,-6) {4};
    \node[shape=circle,draw=black, label = below:{$c_{55}$}] (E) at (-1.5,-6) {5};
    \node[shape=circle,draw=black, label = left:{$c_{66}$}] (F) at (-4,-4) {6} ;
    \node[shape=circle,draw=black, label = left:{$c_{77}$}] (G) at (-3,-1) {7} ;

    \path [-] (A) edge node [align=center,above]{{ }$c_{12}, s_{12}$} (B);
    \path [-] (B) edge node [align=center,right]{$c_{23}, s_{23}$} (C);
    \path [-] (C) edge node [align=center,right]{$c_{34}, s_{34}$} (D);
    \path [-] (D) edge node [align=center,below]{$c_{45}, s_{45}$} (E);
    \path [-] (E) edge node [align=center,left]{$c_{56}, s_{56}$} (F);
    \path [-] (F) edge node [align=center,left]{$c_{67}, s_{67}$} (G);
    \path [-] (G) edge node [align=center,above]{$c_{71}, s_{71}${ }} (A);
    \path [dashed] (A) edge node [align=center,right]{$\tilde c_{14}, \tilde s_{14}$} (D);
    \path [dashed] (A) edge node [align=center,right]{$\tilde c_{16}, \tilde s_{16}$} (F);

\end{tikzpicture}
\caption[Triangulation of a 7-cycle.]{
Triangulation of a 7-cycle. Here, we  consider three minors corresponding to the ``subcycles" $\{1,2,3,4\}$, $\{1,4,5,6\}$ and $\{1,6,7\}$. Variables corresponding to each node and edge are also shown.
}\label{triangulation example}
\end{figure*}

Finally, we write the McCormick relaxation for the nonconvex set $\mathcal{Q}_C$, parametrized by the variable bounds $\underline c, \overline c,\underline s, \overline s$,  for a given cycle $C$, compactly, as follows:
\begin{equation}\label{lp system cs}
\mathcal{M}_C (\underline c, \overline c,\underline s, \overline s)  = \bigg \{(c,s): \exists (\tilde c, \tilde s, w) :  A \begin{bmatrix}  c \\  s \end{bmatrix} + \tilde A  \begin{bmatrix} \tilde c \\ \tilde s \end{bmatrix}  + B w \le c,  \ E w = 0 \bigg \}.
\end{equation}
Here, $w$ is a vector of new variables defined to linearize the bilinear terms in the minor constraints. Inequality constraints  contain the McCormick envelopes of the bilinear terms and bounds  on the variables, while equality constraints include the linearized minor equality constraints. 

\paragraph{Discretization}
\label{paragraph:discretize}

Our preliminary analysis has shown that $\mathcal{M}_C$ does not approximate conv$(\mathcal{Q}_C)$ accurately in most cases. However, it is well-known that McCormick relaxation of a nonconvex set  converges to the convex hull of the set as the variable ranges shrink. Motivated by this fact, we propose a discretization technique to improve the accuracy of the set $\mathcal{M}_C$. We note that this idea has been applied to other nonconvex problems in the literature (e.g. pooling problem \cite{Misener, DeyGu,GupteAhDeCh}).

To begin with, let $[\underline c_d, \overline c_d] \times [\underline s_d, \overline s_d]$, for $d \in D$, be a partition of the initial box $[\underline c, \overline c] \times [\underline s, \overline s]$. Then, the following relations trivially hold:
\begin{equation} \label{def disc McC}
\mathcal{M}_C^D := \text{conv} \left ( \bigcup_{d \in D} \mathcal{M}_C (\underline c_d, \overline c_d,\underline s_d, \overline s_d) \right )   \subseteq \mathcal{M}_C (\underline c, \overline c,\underline s, \overline s)  %\supseteq 
\quad \text{ and } \quad 
\text{conv}(\mathcal{Q}_D) \subseteq \mathcal{M}_C^D.
\end{equation}
Note that since $\bigcup_{d \in D} \mathcal{M}_C (\underline c_d, \overline c_d,\underline s_d, \overline s_d) $ is a finite union of polyhedral representable sets, $\mathcal{M}_C^D$ is also polyhedral representable.

In our implementation, we use the following construction. First, we decide a set of variables and bisect their variable ranges to obtain the collection $D$. Then, we construct the set $\mathcal{M}_C^D$. Finally, we solve the separation problem presented in Section \ref{sec:sep prob} to obtain cutting planes
% for the polyhedral representable set 
valid for $\mathcal{M}_C^D$.

After initial calibration, we  decided to choose the collection $D$ as follows. We first  choose a reference node to start triangulation. Then, for each subcycle of a cycle, we pick the edges which are neither the first nor the last line in a subcycle and apply bisection to the corresponding $c_{ij}$ and $s_{ij}$ variables.  For instance, in Figure \ref{triangulation example}, variables $c_{23}, s_{23},c_{34}, s_{34},c_{45}, s_{45},c_{56}, s_{56},c_{67}, s_{67}$ are bisected.
%with bisection applied to ``intermediary" lines as explained in Section \ref{paragraph:discretize}.

%Let us define the set $\mathcal{M}_C := \{z: \exists \tilde z: \eqref{lp system}\}$. Since $\mathcal{M}_C$ is a polyhedral representable set, given an SOCP solution $z^*$, we can solve LPs to decide whether $z^*$ belongs to $\mathcal{M}_C$ or there exists a separating hyperplane of the form $\alpha^Tz \le \beta$ with $\alpha^Tz^* > \beta$.

%e first of these cutting planes is based on a new cycle-based relaxation of OPF. As detailed in \cite{kocuk2015}, this formulation can be written as a set of bilinear constraints in an extended space, for which we can obtain LP relaxation by using McCormick relaxations of the bilinear constraints over cycles. For large networks, including McCormick relaxations for all the cycle constraints may be computationally inefficient. Therefore, we propose a separation routine which generates cutting planes to separate a solution of the classic SOCP relaxation from the McCormick envelopes of the cycle constraints. The separation is applied to every cycle in the cycle basis individually.

%...
%
%
%some TRIANGULATIONS
%
%...
%
%DISCRETIZATION
%
%
%...

\subsection{An Alternative to Type 2 and 3 Minor Conditions: Arctangent Constraints}
\label{sec:2by2 minors arctan}

In this section, we propose another equivalent characterization of the rank-one, or equivalently $2\times2$ minors requirement explained above. Our alternative condition is based on the following relationship between the phase angles $\theta$ and $c,s$ variables, which correspond to the real and imaginary parts of the complex matrix variable $X$ using the $\atan2(y,x)$\footnotemark\footnotetext{$\operatorname{atan2}(y, x) = \begin{cases}
\arctan\frac y x & \quad x > 0 \\
\arctan\frac y x + \pi& \quad y \ge 0 , x < 0 \\
\arctan\frac y x - \pi& \quad y < 0 , x < 0 \\
+\frac{\pi}{2} & \quad y > 0 , x = 0 \\
-\frac{\pi}{2} & \quad y < 0 , x = 0 \\
\text{undefined} & \quad y = 0, x = 0
\end{cases}$} function:
\begin{equation}
\theta_j - \theta_i = \atan2({s_{ij}},{c_{ij}})   \quad i,j \in \mathcal{B} \label{arctangent cons}
\end{equation}

\subsubsection{Equivalence}
First, we claim that Type 1 minor constraint \eqref{type 1 minor} together with arctangent constraint \eqref{arctangent cons} implies Type 2 minor \eqref{type 2 minor} equations.

\begin{prop}\label{prop type 2 = type 1 + arctan}
\begin{equation*}
\begin{split}
\bigg\{(c,s):  \begin{vmatrix} X_{ii} & X_{ij} \\ X_{ki} & X_{kj} \end{vmatrix} = 0  \bigg\} \supseteq 
\bigg\{(c,s): \exists \theta:  \begin{vmatrix} X_{ii} & X_{ij} \\ X_{ji} & X_{jj} \end{vmatrix} = 
			%         \begin{vmatrix} X_{ii} & X_{ik} \\ X_{ki} & X_{kk} \end{vmatrix} = 
			         \begin{vmatrix}  X_{kk} & X_{ki}   \\X_{ik}& X_{ii}\end{vmatrix} = 
			%         \begin{vmatrix} X_{jj} & X_{jk} \\ X_{kj} & X_{kk} \end{vmatrix} = 0, \\
			         \begin{vmatrix} X_{kk} & X_{kj} \\ X_{jk} & X_{jj} \end{vmatrix} = 0, \\
				\theta_j - \theta_i = \atan2({s_{ij}},{c_{ij}}),
				\theta_i - \theta_k = \atan2({s_{ki}},{c_{ki}}),
				\theta_j - \theta_k = \atan2({s_{kj}},{c_{kj}}) \bigg \}.
\end{split}
\end{equation*}
\end{prop}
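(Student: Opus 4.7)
The plan is a direct verification using polar parametrizations induced by the three Type 1 minor equations and the three arctangent constraints. Write the real and imaginary parts of the Type 2 minor in $(c,s)$ variables (as in \eqref{type 2 minor cs}), then substitute polar forms and collapse the resulting trigonometric expressions via the sine/cosine angle-addition formulas.

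More concretely, first I would use the three Type 1 minor equalities to write
\begin{align*}
c_{ij}^2+s_{ij}^2 &= c_{ii}c_{jj},\quad
c_{ki}^2+s_{ki}^2 = c_{kk}c_{ii},\quad
c_{kj}^2+s_{kj}^2 = c_{kk}c_{jj},
\end{align*}
so each off-diagonal pair has a known modulus. Combined with the corresponding arctangent constraint, which identifies the argument of $(c_{ij},s_{ij})$ with $\theta_j-\theta_i$, and similarly for the $(k,i)$ and $(k,j)$ pairs, I get the polar representations
\begin{align*}
c_{ij} &= \sqrt{c_{ii}c_{jj}}\cos(\theta_j-\theta_i), & s_{ij} &= \sqrt{c_{ii}c_{jj}}\sin(\theta_j-\theta_i),\\
c_{ki} &= \sqrt{c_{kk}c_{ii}}\cos(\theta_i-\theta_k), & s_{ki} &= \sqrt{c_{kk}c_{ii}}\sin(\theta_i-\theta_k),\\
c_{kj} &= \sqrt{c_{kk}c_{jj}}\cos(\theta_j-\theta_k), & s_{kj} &= \sqrt{c_{kk}c_{jj}}\sin(\theta_j-\theta_k).
\end{align*}

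Next I would substitute these expressions into the two bilinear equations in \eqref{type 2 minor cs}. The real part becomes
\[
c_{ii}\sqrt{c_{kk}c_{jj}}\cos(\theta_j-\theta_k)-c_{ii}\sqrt{c_{kk}c_{jj}}\bigl[\cos(\theta_j-\theta_i)\cos(\theta_i-\theta_k)-\sin(\theta_j-\theta_i)\sin(\theta_i-\theta_k)\bigr],
\]
which, by the cosine-addition identity $\cos(\alpha+\beta)=\cos\alpha\cos\beta-\sin\alpha\sin\beta$ applied with $\alpha=\theta_j-\theta_i$ and $\beta=\theta_i-\theta_k$, collapses to zero. The analogous substitution for the imaginary part yields a difference that reduces to zero via the sine-addition identity. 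This establishes the Type 2 minor equation and completes the containment.

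The only subtle step is handling the degenerate case where some diagonal entry vanishes, say $c_{ii}=0$. Then $c_{ij}^2+s_{ij}^2=0$ forces $c_{ij}=s_{ij}=0$ and likewise $c_{ki}=s_{ki}=0$, in which case the arctangent is undefined but the Type 2 minor trivially vanishes since $X_{ii}=X_{ij}=X_{ki}=0$ makes both $X_{ii}X_{kj}$ and $X_{ij}X_{ki}$ zero. I would dispatch this case upfront and then work under $c_{ii},c_{jj},c_{kk}>0$ for the main argument, so the square roots and arctangents are well-defined. I expect the main (and essentially only) obstacle to be bookkeeping this degenerate branch cleanly; the trigonometric manipulation itself is a one-line application of the angle-addition formulas.
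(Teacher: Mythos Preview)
Your proposal is correct and follows essentially the same approach as the paper: derive the polar representations $c_{ij}=\sqrt{c_{ii}c_{jj}}\cos(\theta_j-\theta_i)$, $s_{ij}=\sqrt{c_{ii}c_{jj}}\sin(\theta_j-\theta_i)$ (and similarly for $(k,i)$ and $(k,j)$) from the Type~1 minors plus the arctangent constraints, substitute into the real and imaginary parts of \eqref{type 2 minor cs}, and collapse each to zero via the cosine- and sine-addition identities. Your explicit treatment of the degenerate case $c_{ii}=0$ is a small addition the paper omits, but otherwise the arguments coincide.
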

\begin{proof}
From \eqref{arctangent cons}, we have that $c_{ij}=\sqrt{c_{ii}c_{jj}}\cos(\theta_j-\theta_i)$ and $s_{ij}=\sqrt{c_{ii}c_{jj}}\sin(\theta_j-\theta_i)$.
Then, 
\begin{equation*}
\begin{split}
c_{ii}c_{kj} - c_{ij}c_{ki}  + s_{ij}s_{ki} =& c_{ii}\sqrt{c_{kk}c_{jj}}\cos(\theta_j-\theta_k) 
- \sqrt{c_{ii}c_{jj}}\cos(\theta_j-\theta_i)\sqrt{c_{kk}c_{ii}}\cos(\theta_i-\theta_k) \\ 
&  \hspace{3.9cm} +\sqrt{c_{ii}c_{jj}}\sin(\theta_j-\theta_i)\sqrt{c_{kk}c_{ii}}\sin(\theta_i-\theta_k) \\
= & c_{ii}\sqrt{c_{jj}c_{kk}}[\cos(\theta_j-\theta_k) - \cos(\theta_j-\theta_i)\cos(\theta_i-\theta_k) + \sin(\theta_j-\theta_i)\sin(\theta_i-\theta_k) ]\\
=&0,
\end{split}
\end{equation*}
and
\begin{equation*}
\begin{split}
c_{ii}s_{kj} - s_{ij}c_{ki}  - c_{ij}s_{ki} =& c_{ii}\sqrt{c_{kk}c_{jj}}\sin(\theta_j-\theta_k) 
- \sqrt{c_{ii}c_{jj}}\sin(\theta_j-\theta_i)\sqrt{c_{kk}c_{ii}}\cos(\theta_i-\theta_k) \\ 
&  \hspace{3.9cm} -\sqrt{c_{ii}c_{jj}}\cos(\theta_j-\theta_i)\sqrt{c_{kk}c_{ii}}\sin(\theta_i-\theta_k) \\
= & c_{ii}\sqrt{c_{jj}c_{kk}}[\sin(\theta_j-\theta_k) - \sin(\theta_j-\theta_i)\cos(\theta_i-\theta_k) - \cos(\theta_j-\theta_i)\sin(\theta_i-\theta_k) ]\\
=&0,
\end{split}
\end{equation*}
which imply Type 2 minor \eqref{type 2 minor}.
\end{proof}

A similar proposition about Type 3 minors is also true, that is, Type 1 minor constraint \eqref{type 1 minor} together with arctangent constraint \eqref{arctangent cons} implies Type 3 minor \eqref{type 3 minor} equations.  

\begin{prop}\label{prop type 3 = type 1 + arctan}
\begin{equation*}
\begin{split}
\bigg\{(c,s):  \begin{vmatrix} X_{ij} & X_{ik} \\ X_{lj} & X_{lk} \end{vmatrix}  = 0  \bigg\} \supseteq
\bigg\{(c,s): \exists \theta:  \begin{vmatrix} X_{ii} & X_{ij} \\ X_{ji} & X_{jj} \end{vmatrix} = 
			         \begin{vmatrix} X_{ii} & X_{ik} \\ X_{ki} & X_{kk} \end{vmatrix} = 
			 %        \begin{vmatrix}  X_{kk} & X_{ki}   \\X_{ik}& X_{ii}\end{vmatrix} = 
			         \begin{vmatrix} X_{jj} & X_{jl} \\ X_{lj} & X_{ll} \end{vmatrix} = 0,  
			         \begin{vmatrix} X_{kk} & X_{kl} \\ X_{lk} & X_{ll} \end{vmatrix} = 0, \\
			 %        \begin{vmatrix} X_{kk} & X_{kj} \\ X_{jk} & X_{jj} \end{vmatrix} = 0, \\
				\theta_j - \theta_i = \atan2({s_{ij}},{c_{ij}}),
				\theta_k - \theta_i = \atan2({s_{ik}},{c_{ik}}),
				\theta_j - \theta_l = \atan2({s_{lj}},{c_{lj}}) ,
				\theta_l - \theta_l = \atan2({s_{lk}},{c_{lk}})\bigg \}.
\end{split}
\end{equation*}
\end{prop}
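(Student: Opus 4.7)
The plan is to mimic the proof of Proposition \ref{prop type 2 = type 1 + arctan}. First, observe that the four Type 1 minor conditions give
\[
c_{ij}^2 + s_{ij}^2 = c_{ii}c_{jj},\quad c_{ik}^2+s_{ik}^2 = c_{ii}c_{kk},\quad c_{jl}^2+s_{jl}^2 = c_{jj}c_{ll},\quad c_{kl}^2+s_{kl}^2 = c_{kk}c_{ll},
\]
while the arctangent relations determine the phase of each off-diagonal element. Combining the two, I would write, for each of the four relevant edges,
\[
c_{ij}=\sqrt{c_{ii}c_{jj}}\cos(\theta_j-\theta_i),\quad s_{ij}=\sqrt{c_{ii}c_{jj}}\sin(\theta_j-\theta_i),
\]
and analogously for the pairs $(i,k)$, $(l,j)$ and $(l,k)$ (using that $X$ is Hermitian, so $c_{lj}=c_{jl}$ and $s_{lj}=-s_{jl}$ are consistent with $\theta_j-\theta_l$). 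This is the setup step, identical in spirit to the first line of the Type 2 proof.

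Next, I would substitute these expressions into the real and imaginary parts of the Type 3 minor given in \eqref{type 3 minor cs}. The real part becomes
\[
\sqrt{c_{ii}c_{jj}c_{kk}c_{ll}}\bigl[\cos(\theta_j-\theta_i)\cos(\theta_k-\theta_l)-\sin(\theta_j-\theta_i)\sin(\theta_k-\theta_l)\bigr]
\]
minus the analogous expression with the pairs swapped, i.e.\ with $(\theta_k-\theta_i)$ and $(\theta_j-\theta_l)$. The imaginary part reduces similarly but with sine angle–sum structure. Applying the cosine and sine addition formulas collapses each bracket into a single $\cos$ or $\sin$ of a summed angle.

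The key algebraic observation, which makes the whole proof work, is the telescoping identity
\[
(\theta_j-\theta_i)+(\theta_k-\theta_l) \;=\; (\theta_k-\theta_i)+(\theta_j-\theta_l),
\]
so after applying the addition formulas both terms in the real part reduce to $\sqrt{c_{ii}c_{jj}c_{kk}c_{ll}}\cos(\theta_j+\theta_k-\theta_i-\theta_l)$, and similarly both terms in the imaginary part reduce to $\sqrt{c_{ii}c_{jj}c_{kk}c_{ll}}\sin(\theta_j+\theta_k-\theta_i-\theta_l)$; their differences are therefore zero, which is exactly \eqref{type 3 minor cs}.

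I do not expect a real obstacle here: the only subtlety is bookkeeping the Hermitian sign conventions (the angle associated with $X_{lj}$ is $\theta_j-\theta_l$ rather than $\theta_l-\theta_j$, and likewise for $X_{lk}$), and being consistent about whether $\sqrt{c_{ii}c_{jj}}$ is well-defined — which it is, because the diagonal entries are nonnegative by Proposition \ref{rank minor prop}, and one can assume they are strictly positive (the degenerate case $c_{ii}=0$ forces the incident off-diagonals to vanish by the Type 1 minor, so both sides of \eqref{type 3 minor cs} are trivially zero and the inclusion holds). Thus the proof is essentially a one-line trigonometric identity, with the four Type 1 minors providing the magnitudes and the four arctangent equations providing the phases.
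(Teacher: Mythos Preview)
Your proposal is correct and follows exactly the approach the paper intends: the paper omits the proof of this proposition, stating only that it is similar to the proof of Proposition~\ref{prop type 2 = type 1 + arctan}, and your argument is precisely that analogue---use the four Type~1 minors to fix magnitudes, the arctangent relations to fix phases, substitute into \eqref{type 3 minor cs}, and collapse both parts via the angle-addition formulas and the telescoping identity $(\theta_j-\theta_i)+(\theta_k-\theta_l)=(\theta_k-\theta_i)+(\theta_j-\theta_l)$. Your remark on the degenerate case $c_{ii}=0$ is a nice touch that the paper does not make explicit.
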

We omitted the proof of Proposition \ref{prop type 3 = type 1 + arctan} due to its similarity to the proof of Proposition~\ref{prop type 2 = type 1 + arctan}.

\subsubsection{Convexification}
In Section \ref{type 2 and 3, cycle relax}, we analyzed Type 2 and 3 minors and proposed a method to obtain a linear outer-approximation. Now, we propose  another linearization method using the arctangent restriction \eqref{arctangent cons}. To start with, let us define the following nonconvex set
\begin{equation}
\mathcal{AT}:=\left\{(c,s,\theta)\in\mathbb{R}^3 : \theta = \arctan \left ( \frac sc \right), (c,s,\theta)\in[\underline c, \overline c]\times[\underline s, \overline s] \times [\underline \theta, \overline \theta] \right\},	\label{arctan constraint}
\end{equation}
where we denote $\theta= \theta_j - \theta_i$ and drop $(i,j)$ indices for brevity. We also assume $\underline c>0$. Let us denote the four corners of the box in $(c,s,\theta)$ space as follows: 
%To start with, let us denote  the corners of the box $[\underline c, \overline c]\times[\underline s, \overline s]$ as 
\begin{equation}
\begin{split}
   \zeta^1 &= (\underline c, \overline s, \arctan \left( {\overline s}/{\underline c} \right) ), \quad
    \zeta^2 = (\overline c, \overline s, \arctan \left( {\overline s}/{\overline c} \right)), \\ 
   \zeta^3 &= (\overline c,  \underline s, \arctan \left( {\underline s}/{\overline c} \right)), \quad
   \zeta^4 = (\underline c, \underline s, \arctan \left( {\underline s}/{\underline c} \right)). 
\end{split}
\end{equation}
Two inequalities that approximate the upper envelope of $\mathcal{AT}$ are described below.
% Let us first focus on upper envelopes.
\begin{prop}
Let $\theta = \gamma_1 + \alpha_1 c + \beta_1 s$  and $\theta = \gamma_2 + \alpha_2 c + \beta_2 s$ be the  planes passing through points $\{\zeta^1,\zeta^2, \zeta^3\}$,  and $\{\zeta^1,\zeta^3, \zeta^4\}$, respectively.
Then, two valid inequalities for $\mathcal{AT}$ can be obtained as
\begin{align}\label{eq:arctan-upper}
\bar \gamma_m + \alpha_m c + \beta_m s \ge \arctan \left( \frac{s}{c} \right)
\end{align}
for all $(c,s) \in[\underline c, \overline c]\times[\underline s, \overline s]$ with $\bar \gamma_m = \gamma_m + \Delta \gamma_m$, where
\begin{equation} \label{error problem with angle}
\Delta \gamma_m =   \max \left \{  \arctan \left( \frac{s}{c} \right) - (\gamma_m + \alpha_m c + \beta_m s ) :   (c, s) \in [\underline c, \overline c] \times [\underline s, \overline s],  \ c \tan\underline \theta \le s \le c\tan\overline \theta \right \}, 
\end{equation} 
for $m=1,2$.
\end{prop}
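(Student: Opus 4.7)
The proof reduces to unpacking the definition of $\bar\gamma_m$, so my plan is essentially one line once the feasibility of the construction is verified. First, I would note that the maximization problem defining $\Delta\gamma_m$ in \eqref{error problem with angle} is well-posed: its feasible region is a closed subset of a compact box (the angle inequalities $c\tan\underline\theta \le s \le c\tan\overline\theta$ are closed linear cuts), and since we have assumed $\underline c > 0$, the objective $\arctan(s/c) - (\gamma_m + \alpha_m c + \beta_m s)$ is continuous on this region. By the extreme value theorem, $\Delta\gamma_m$ is attained (and finite). One should also check that the four corners $\zeta^1,\ldots,\zeta^4$ actually lie in the feasible region, which is built into the assumption that $[\underline c, \overline c]\times[\underline s, \overline s]$ is the projection of the box and $[\underline\theta,\overline\theta]$ its angular span.

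Next comes the real content of the argument. Take any $(c,s,\theta)\in\mathcal{AT}$. By definition of $\mathcal{AT}$, we have $\theta=\arctan(s/c)$, the pair $(c,s)$ lies in $[\underline c,\overline c]\times[\underline s,\overline s]$, and $\theta\in[\underline\theta,\overline\theta]$, which (using $c\ge\underline c>0$) is equivalent to $c\tan\underline\theta \le s \le c\tan\overline\theta$. Thus $(c,s)$ is feasible in \eqref{error problem with angle}, so by the definition of maximum,
\[
\arctan\!\left(\tfrac{s}{c}\right) - (\gamma_m + \alpha_m c + \beta_m s) \;\le\; \Delta\gamma_m.
\]
Rearranging and substituting $\bar\gamma_m = \gamma_m + \Delta\gamma_m$ gives $\bar\gamma_m + \alpha_m c + \beta_m s \;\ge\; \arctan(s/c)$, which is \eqref{eq:arctan-upper}. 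Hence the inequality is valid at every point of $\mathcal{AT}$, for both $m=1,2$.

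I expect no real obstacle at the level of the statement itself: the result is a \emph{lifting argument}, where any plane is turned into a valid upper bound by shifting its intercept upward by the maximum positive error. The true difficulty is hidden in the computation of $\Delta\gamma_m$, which is a nonconvex two-dimensional optimization problem (the $\arctan$ surface is neither convex nor concave globally on the triangle). In practice I would point out that it can be solved either by locating stationary points via KKT conditions and comparing with boundary values, or by a numerical grid search, since the feasible region is a two-dimensional polygon. One should also remark that any valid \emph{upper bound} on $\Delta\gamma_m$ (rather than the exact maximum) still yields a valid inequality by the same rearrangement, but taking the true maximum gives the tightest plane parallel to the interpolant through $\{\zeta^1,\zeta^2,\zeta^3\}$ or $\{\zeta^1,\zeta^3,\zeta^4\}$.
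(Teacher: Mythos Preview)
Your proposal is correct and follows exactly the approach the paper has in mind: the paper simply remarks that ``by the construction of \eqref{error problem with angle}, it is evident that $\bar\gamma_m+\alpha_m c+\beta_m s$ dominates $\arctan(s/c)$,'' and you have carefully unpacked that one-line observation (compactness and continuity give attainment of $\Delta\gamma_m$; any $(c,s,\theta)\in\mathcal{AT}$ lies in the feasible set of \eqref{error problem with angle}; rearranging the definition of the maximum yields the inequality). Your additional remarks about computing $\Delta\gamma_m$ via KKT enumeration and about any upper bound on $\Delta\gamma_m$ still giving a valid (but looser) cut are in line with the paper's discussion and Appendix~\ref{app:arctan}.
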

Note that by the construction of \eqref{error problem with angle}, it is evident that $\bar \gamma_m+\alpha_m c + \beta_m s$ dominates the $\arctan(s/c)$ over the box. The nonconvex optimization problem  \eqref{error problem with angle} can be solved by enumerating all possible Karush-Kuhn-Tucker (KKT) points. See Appendix \ref{app:arctan} for details. These inequalities are  improvements over the similar ones in \cite{kocuk2015} since the bounds on $\theta$ variables are also taken into consideration in the calculation of the offset value $\Delta \gamma_m$.

Two inequalities that approximate the lower envelope of $\mathcal{AT}$ are described below.

\begin{prop}
Let $\theta = \gamma_3 + \alpha_3 c + \beta_3 s$  and $\theta = \gamma_4 + \alpha_4 c + \beta_4 s$ be the  planes passing through points $\{\zeta^1,\zeta^2, \zeta^4\}$,  and $\{\zeta^2,\zeta^3, \zeta^4\}$, respectively.
Then, two valid inequalities for  $\mathcal{AT}$ are defined as
\begin{align}\label{eq:arctan-lower}
 \bar \gamma_n + \alpha_n c + \beta_n s \le \arctan \left( \frac{s}{c} \right)
\end{align}
for all $(c,s) \in[\underline c, \overline c]\times[\underline s, \overline s]$ with $\bar \gamma_n = \gamma_n - \Delta \gamma_n$, where 
\begin{equation} \label{eq:arctan-lower-deltagamma}
\Delta \gamma_n =   \max \left \{ (\gamma_n+ \alpha_n c + \beta_n s ) - \arctan \left( \frac{s}{c} \right)   :    (c, s) \in [\underline c, \overline c] \times [\underline s, \overline s],  \ c \tan \underline \theta \le s \le c\tan\overline \theta \right \}, 
\end{equation} 
for $n=3,4$.
\end{prop}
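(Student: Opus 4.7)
The plan is to establish validity directly from the definition of $\Delta\gamma_n$ in \eqref{eq:arctan-lower-deltagamma}, and then to sketch how the max in \eqref{eq:arctan-lower-deltagamma} is actually evaluated by KKT enumeration, paralleling the treatment of the upper envelope in the preceding proposition and its companion argument in Appendix~A.

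First I would observe that the feasible region of \eqref{eq:arctan-lower-deltagamma} --- the box $[\underline c,\overline c]\times[\underline s,\overline s]$ intersected with the wedge $c\tan\underline\theta\le s\le c\tan\overline\theta$ --- is compact, and that the objective is continuous on it since $\underline c>0$. Hence $\Delta\gamma_n$ is finite and attained. Validity is then almost definitional: for every feasible $(c,s)$, the defining maximum gives
\begin{equation*}
\Delta\gamma_n \;\ge\; (\gamma_n+\alpha_n c+\beta_n s) - \arctan\!\left(\tfrac{s}{c}\right),
\end{equation*}
which rearranges to
\begin{equation*}
\arctan\!\left(\tfrac{s}{c}\right) \;\ge\; (\gamma_n-\Delta\gamma_n)+\alpha_n c+\beta_n s \;=\; \bar\gamma_n+\alpha_n c+\beta_n s.
\end{equation*}
Any $(c,s,\theta)\in\mathcal{AT}$ satisfies $\theta=\arctan(s/c)$, lies in the box, and has $\theta\in[\underline\theta,\overline\theta]$; since $c>0$ the latter is equivalent to $c\tan\underline\theta\le s\le c\tan\overline\theta$. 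Thus the projection $(c,s)$ is feasible for \eqref{eq:arctan-lower-deltagamma}, and inequality \eqref{eq:arctan-lower} holds throughout $\mathcal{AT}$.

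The substantive work is in evaluating the nonconvex maximum in \eqref{eq:arctan-lower-deltagamma}. Here I would mirror the KKT-enumeration strategy deferred to Appendix~A for the upper-envelope case: write the KKT conditions on the six linear constraints --- four box sides plus the two wedge edges $s=c\tan\underline\theta$ and $s=c\tan\overline\theta$ --- and enumerate all active-set configurations. At an interior stationary point one needs $(\alpha_n,\beta_n)=\nabla\arctan(s/c)=\bigl(-s/(c^2+s^2),\,c/(c^2+s^2)\bigr)$; each active-set choice reduces the stationarity system either to an explicit candidate or to a one-variable nonlinear equation solvable in closed form. Collecting all candidates and taking the largest objective value yields $\Delta\gamma_n$. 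The main obstacle, such as it is, is purely bookkeeping: on each wedge edge the map $(c,s)\mapsto\arctan(s/c)$ is constant in the radial direction, so the stationarity system degenerates there and those boundary pieces must be handled as separate cases to make sure no maximizer is missed. Once this enumeration is done, the rest of the proof is the one-line rearrangement above.
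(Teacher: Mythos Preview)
Your proposal is correct and matches the paper's approach: validity follows immediately from the definition of $\Delta\gamma_n$, and the offset is computed by KKT enumeration as in Appendix~A. The only refinement worth noting is that the paper rules out interior maximizers in one stroke by observing that the Hessian of $\arctan(s/c)$ is indefinite (so second-order necessary conditions fail), and then discards the two wedge edges because $f$ is affine there, reducing the computation to four one-dimensional boundary problems rather than a full active-set enumeration.
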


In summary, the four linear inequalities that approximate conv$(\mathcal{AT}_{ij})$ are given as
\begin{equation}\label{arctan 4 together}
\text{AT}_{ij}(\underline c, \overline c,\underline s,\overline s): \ 
\bar \gamma_{ij}^m + \alpha_{ij}^m c_{ij} + \beta_{ij}^m s_{ij} \le  \theta_j - \theta_i  \le \bar \gamma_{ij}^n + \alpha_{ij}^n c_{ij} + \beta_{ij}^n s_{ij} , \ m= 1,2 \text{ and } n = 3,4
\end{equation}
for some line $(i,j)$.

\subsection{General Principal Submatrices}
\label{sec: sdp separate}

%The second approach we propose to strengthen the classic SOCP relaxation is similar in spirit to the separation approach above, but here, instead of separating over the McCormick relaxations of the cycle constraints \eqref{lp system}, we separate a given SOCP relaxation solution from the feasible region of the SDP relaxation of cycles. 

Up until this point, we have mainly discussed how to incorporate the minor constraints into our relaxation scheme. In this section, we propose an approach to include a relaxed version of the positive semidefiniteness constraint by considering general principal minors of the matrix variable $X$. Our approach is a relaxation of the positive semidefiniteness requirement since  $X\succeq 0$ implies that \textit{all} principal submatrices should be positive semidefinite while we only include a few hyperplanes which outer-approximate $X^s\succeq 0$ for some principal submatrix $X^s$.
From a different view point, the approach in this section can  be seen as a simultaneous convexification of several, appropriately chosen Type 2 and 3 minor conditions \eqref{type 2 minor cs} and \eqref{type 3 minor cs}.
%In the previous section, we presented a linear outer-approximation of the Type 2 and 3 minors and constructed LP relaxations for a cycle which contains several 3- and 4-cycles. However, linearizations are done separately in this approach, that is, each Type 2 or 3 minor has its own LP relaxation and only some of the variables are same, which gives the overall strength. Another approach would be to convexify all these minors simultaneously, which can be achieved by looking at the principal minor of the matrix variable $X$ corresponding to a cycle.

Let $\mathcal{B}'$ be a subset of buses. Let $x \in \mathbb{R}^{2|\mathcal{B}'|}$ be a vector of bus voltages defined as $x=[e^T; f^T]^T$ such that $x_i=e_i$ for $i\in\mathcal{B}'$ and $x_{i'}=f_i$ for $i'=i+|\mathcal{B}'|$. Observe that the following linear relationship between $c$, $s$ and $W$ holds, 
\begin{subequations}\label{eq:sdplin W}
\begin{align}
c_{ij} &= e_ie_j+f_if_j= W_{ij} + W_{i'j'} & i&,j\in {\mathcal{B}'}\\ 
s_{ij} &= e_if_j-e_jf_i= W_{ij'} - W_{ji'} & i&,j\in {\mathcal{B}'} \\ 
c_{ii} &= e_i^2+f_i^2= W_{ii} + W_{i'i'},   & i&\in\mathcal{B}'\\
W &= xx^T.
\end{align}
\end{subequations}
Here, we used real matrices instead of complex matrices for convenience. It is proven in \cite{taylorbook} that using real matrices is equivalent to complex matrices.

Clearly, the set defined by \eqref{eq:sdplin W} is nonconvex. A straightforward SDP relaxation can be presented as follows:
\begin{equation} 
\begin{split}
\mathcal{S}_{\mathcal{B}'}:=\{(c,s) \in \mathbb{R}^{2|\mathcal{B}'|} : \exists  W\in\mathbb{S}^{2|\mathcal{B}'|}_+,\;
c_{ij} =  W_{ij} + W_{i'j'} \quad i,j\in {\mathcal{B}'}\\ 
s_{ij} = W_{ij'} - W_{ji'}   \quad i,j\in {\mathcal{B}'} \\ 
c_{ii} = W_{ii} + W_{i'i'}  \quad i \in\mathcal{B}' \
 \}.
\end{split}
\end{equation}

%Note that this relaxation is a further relaxation of the SDP relaxation since only one principal submatrix is considered here. Although the subset $\mathcal{B}'$ can be general, previous experience \cite{kocuk2015} shows that it makes sense to use a subset of buses that correspond to a cycle. In Section \ref{sec:sep prob}, we present a way to obtain cutting planes for the set $\mathcal{S}_{\mathcal{B}'}$.
Note that this relaxation is a further relaxation of the SDP relaxation since only one principal submatrix is considered here. Although the subset $\mathcal{B}'$ can be general, previous experience \cite{kocuk2015} shows that it makes sense to use a subset of buses that correspond to a cycle. In particular, we define the following set $\mathcal{S}_C$ for a cycle $C$
\begin{equation}\label{eq:sdplin W relax}
\begin{split}
\mathcal{S}_C:=\{(c,s) \in \mathbb{R}^{2|C|} : \exists  W\in\mathbb{S}^{2|C|}_+,\;
&c_{ij} =  W_{ij} + W_{i'j'} \quad (i,j)\in C\\ 
&s_{ij} = W_{ij'} - W_{ji'}   \quad (i,j)\in C\\ 
&c_{ii} = W_{ii} + W_{i'i'}  \quad \ i \in \{ k: (k,l) \in C \} \
 \},
\end{split}
\end{equation}
and then, use the following procedure  in Section \ref{sec:sep prob}  to obtain cutting planes for this set.

%Let us know discuss which subsets $\mathcal{B}'$ we can use here. Clearly, any
%
%In order to effectively use an SDP relaxation to provide a convex approximation of the angle condition \eqref{arctan cons} over a cycle $C$ in the power network, we  consider the equalities in \eqref{eq:sdplin} associated with this cycle and the corresponding submatrix matrix $\tilde W \in\mathbb{R}^{2|C|\times 2|C|}$ of $W$.
%%$l$ is the index of the set of lines and buses on the cycle $C$, i.e. $l \in L := C\cup\{(i,i) : \exists  j \text{ s.t. } (i,j) \in C\}$}
%%$z_l$ corresponds to $c$ and consider the following homogeneous system \eqref{psd cone system}.
%Consider the following set $\mathcal{S}_C$
%\begin{align} \label{psd cone system}
%\mathcal{S}_C:=\left\{z \in \mathbb{R}^{2|C|} : \exists \tilde W\in\mathbb{R}^{2|C|\times 2|C|} \text{  s.t. } -z_l + A_l \bullet \tilde W  = 0 \quad\forall l\in L, \quad \tilde W\succeq 0 \right\},
%\end{align}
%where $z=(c,s)$, the linear equality represents the linear system \eqref{eq:sdplin} restricted to the cycle $C$, and $L$ is the index set for all these equalities; $\bullet$ denotes the Frobenius inner product between matrices. We suppress labeling variables with $C$ for conciseness. It should be understood that the construction is done for each cycle in a cycle basis.
%Since $\mathcal{S}_C$ is a semidefinite representable set, given an SOCP solution $z^*$, we can solve SDPs to decide whether $z^*$ belongs to $\mathcal{M}_C$ or there exists a separating hyperplane of the form $\alpha^Tz \le \beta$ with $\alpha^Tz^* > \beta$.

\subsection{Separation Problems for LP and SDP Based Cycle Relaxations} % and Choice of the Underlying Norm}
\label{sec:sep prob}

In Sections \ref{type 2 and 3, cycle relax} and  \ref{sec: sdp separate}, we presented  LP and SDP based approximations for the cycles in our problem. In this section, we propose to utilize these relaxations in a cutting plane framework.

Suppose that we are given a solution $(c^*, s^*)$ and we want to determine whether this point belongs to either LP or SDP relaxation given in 
\eqref{lp system cs} or \eqref{eq:sdplin W relax}. In order to cover both cases, let us focus on a generic setting where we have  a conic representable set ${S} = \{x : \exists u, Ax  + B u\succeq_{K} b \}$ and a point $x^*$ that we want to separate. Here, we assume that the cone $K$ is either the nonnegative orthant or the cone of positive semidefinite matrices with appropriate dimension. %given by
%\begin{equation}
%{S} = \{x : \exists u: Ax  + B u\succeq_{K} b \}.
%\end{equation}
{Suppose we want to determine if the given point $x^*$ is in ${S}$, or find a separating hyperplane $\alpha^T x\ge\beta$ such that $\alpha^T x^* < \beta$ otherwise. This problem can be formulated as 
$$
\max_{\alpha,\beta}\left\{\beta-\alpha^T x^* : \alpha^T x\geq \beta \; \forall x\in S, \; \|\alpha\|_\infty\le 1 \right\}.
$$
Clearly, the optimal value of this problem without the norm constraint is unbounded if $x^* \notin S$. In order to find a cutting plane in the proposed form, we dualize the constraint and normalize the $\alpha$ vector to be in the unit $\ell_\infty$ ball. The resulting problem of separating a given point $x^*$ from a conic representable set $S$  is given as follows:
\begin{align}\label{general sep prob}
\hspace{-2mm}\text{SEP}(S, x^*): Z^*:=\max_{\alpha,\beta,\mu}\{ \beta-\alpha^T x^* : b^T\mu\geq\beta, A^T\mu=\alpha, B^T\mu=0,
                        \mu\in K, -e \le \alpha \le e \}.
\end{align}
Here, we have two cases: If $Z^*\leq 0$, then $x^*\in S$, otherwise, the optimal solution $(\alpha^*,\beta^*)$ from \eqref{general sep prob} gives
a separating hyperplane of the form $\alpha^T x\ge\beta$, which then can be used as a cutting plane.

\subsection{Bound Tightening}

So far, one of the standing assumptions for the construction of McCormick relaxations and arctangent outer-approximation was the availability of lower and upper bounds on $c$ and $s$ variables. Clearly, tighter variable bounds will lead to better relaxations. In this section, we explain how good bounds can be obtained by first solving small size bounding SOCPs and then, improving these bounds further by incorporating some dual information.

\subsubsection{Optimization-Based Bound Tightening}
Assuming some angle bounds for a line $(i,j)\in \mathcal{L}$ as
\begin{equation}
\underline{\theta}_{ij} \le \theta_i-\theta_j \le \overline{\theta}_{ij},
\end{equation}
we can obtain a rough first estimate for the bounds on $c$ and $s$ as follows:
\begin{subequations}
\begin{align}
\underline c_{ij} &= \underline V_i \underline V_j \cos(\underline \theta_{ij}),   &\overline c_{ij}& = \overline V_i \overline V_j ,
 \\
\underline s_{ij} &= \overline V_i \overline V_j \sin(\underline \theta_{ij}),  &\overline s_{ij}& = \overline V_i \overline V_j  \sin(\overline \theta_{ij}).
\end{align}
\end{subequations}
We claim that these bounds can be further tightened by solving  SOCP bounding problems.  Let  us consider a line $(k,l) \in \mathcal{L}$ and fix a ``closeness" parameter $r$. We first define  the following sets: $\mathcal{B}_{kl} (r)$, the set of buses which can be reached from either $k$ or $l$ in at most $r$ steps, and $\mathcal{L}_{kl} (r) $, the set of lines incident to at least one bus in $\mathcal{B}_{kl} (r)$.
%However, these bounds may be loose, since it is usually the case that phase angle differences in a power network under normal operation are small, implying  $c_{ij} \approx 1$ and $s_{ij}\approx 0$. Therefore, one can try to improve these bounds. A straightforward approach is to optimize $c_{ij}$ and $s_{ij}$ over the feasible region of the SOCP relaxation as is proposed in \cite{kocuk2014}. However, this procedure can be expensive because we need to solve $4|\mathcal{L}|$ SOCPs, each of the size of the classic SOCP relaxation. % as the original size of the problem.
%
%To be computationally efficient, instead of solving the full size SOCPs to tighten variable bounds, we can obtain potentially weaker bounds by solving a reduced version of the full SOCP relaxation. %relaxation.
%In particular, to find variable bounds for $c_{kl}$ and $s_{kl}$ for some $(k,l) \in \mathcal{L}$, consider the buses which can be reached from either $k$ or $l$ in at most $r$ steps. Denote these buses by a set $ \mathcal{B}_{kl} (r)$. For instance,  $ \mathcal{B}_{kl} (0) = \{k,l\}$, $ \mathcal{B}_{kl} (1) = \delta(k) \cup \delta(l)$, etc. %Similarly, 
%{Also define} 
%$\mathcal{G}_{kl} (r)=\mathcal{B}_{kl} (r) \cap \mathcal{G}$ and 
%$\mathcal{L}_{kl} (r) = 
%\{ (i,j) \in \mathcal{L} : i \in \mathcal{B}_{kl} (r) \text{ or }  j \in \mathcal{B}_{kl} (r) \}$. 
{Consider} the  following second-order cone representable set, 
%For all the buses in $  \mathcal{B}_{ij} (r)$, we write down flow balance constraint. 
\begin{subequations}\label{bounding SOCP}
\begin{align}
 &\hspace{0.5em} p_i^g-p_i^d = G_{ii}c_{ii} + \sum_{j \in \delta(i)}[ G_{ij}c_{ij} -B_{ij}s_{ij}]   & i& \in\mathcal{B}_{kl} (r),\\
  & \hspace{0.5em} q_i^g-q_i^d = -B_{ii}c_{ii} + \sum_{j \in \delta(i)}[ -B_{ij}c_{ij} -G_{ij}s_{ij}]  & i& \in \mathcal{B}_{kl} (r), \\
%  & \hspace{0.5em}  [-G_{ij}c_{ii} + G_{ij}c_{ij} -B_{ij}s_{ij}]^2+[B_{ij}c_{ii}-B_{ij}c_{ij} -G_{ij}s_{ij}]^2  \le (S_{ij}^{\text{max}})^2  &(&i,j) \in %\mathcal{L} \label{powerOnArcR} \\
  & \hspace{0.5em} \underline V_i^2 \le c_{ii} \le \overline V_i^2    & i& \in \mathcal{B}_{kl} (r+1), \\
  & \hspace{0.5em}  p_i^{\text{min}}  \le p_i^g \le p_i^{\text{max}}     & i& \in \mathcal{B}_{kl} (r), \\
  & \hspace{0.5em}  q_i^{\text{min}}  \le q_i^g \le q_i^{\text{max}}     & i& \in \mathcal{B}_{kl} (r), \\
  & \hspace{0.5em} c_{ij}=c_{ji}, \ \ s_{ij}=-s_{ji}    &(&i,j) \in \mathcal{L}_{kl} (r),\\
  & \hspace{0.5em}  [-G_{ij}c_{ii} + G_{ij}c_{ij} -B_{ij}s_{ij}]^2+[B_{ij}c_{ii}-B_{ij}c_{ij} -G_{ij}s_{ij}]^2  \le (S_{ij}^{\text{max}})^2  &(&i,j) \in \mathcal{L}_{kl} (r), \label{powerOnArcReform  bounding prob} \\
  & \hspace{0.5em}  c_{ij}^2+s_{ij}^2  \le c_{ii}c_{jj}     &(&i,j) \in \mathcal{L}_{kl} (r), \\
%  & \hspace{0.5em}\nu_{ij}^n+\eta_{ij}^n c_{ij} + \delta_{ij}^n s_{ij} \ge \nu_{ij}^m+\eta_{ij}^m c_{ii} + \delta_{ij}^m c_{jj}  &(&i,j) \in \mathcal{L}_{kl} (r); m=1,2; n=3,4 \label{edge minor ineq} \\
  & \hspace{0.5em} \text{EC}_{ij}(\underline c, \overline c,\underline s,\overline s) &(&i,j) \in \mathcal{L}_{kl} (r), \label{edge minor ineq bound}  \\
%  & \hspace{0.5em} \bar \gamma_{ij}^m + \alpha_{ij}^m c_{ij} + \beta_{ij}^m s_{ij} \le  \theta_j - \theta_i  \le \bar \gamma_{ij}^n + \alpha_{ij}^n c_{ij} + \beta_{ij}^n s_{ij}  &(&i,j) \in \mathcal{L}_{kl} (r); m= 1,2; n = 3,4
  & \hspace{0.5em} \text{AT}_{ij}(\underline c, \overline c,\underline s,\overline s) &(&i,j) \in \mathcal{L}_{kl} (r), \label{ineq arctan bound}  \\
  & \hspace{0.5em}\underline{\theta}_{ij} \le \theta_i-\theta_j \le \overline{\theta}_{ij} &(&i,j) \in \mathcal{L}_{kl} (r), \label{angle theta bounds bounding prob}  \\
  & \hspace{0.5em} \underline c_{ij} \le c_{ij}  \le \overline c_{ij}, \ \ \underline s_{ij} \le s_{ij}  \le \overline s_{ij}   &(&i,j) \in \mathcal{L}_{kl} (r),
\end{align}
\end{subequations}
where EC and AT are defined as in \eqref{edge cut 4 together} and \eqref{arctan 4 together}, respectively.
This set is an improved version of the one proposed in \cite{kocuk2015}, which does not contain edge cut inequalities \eqref{edge minor ineq bound} for Type 1 minors or arctangent envelopes \eqref{ineq arctan bound} .
%Essentially, \eqref{bounding SOCP} is the classic SOCP relaxation applied to the part of the power network within $r$ steps of the buses $k$ and $l$.

Let us now define the following problems:
\begin{equation}\label{bounding SOCP all}
\begin{split}
\underline P_{kl}^{c} (\underline c, \overline c,\underline s, \overline s, r):  &\hspace{0.5em} \underline c_{kl}^* =\min\{  c_{kl} :  \eqref{bounding SOCP} \}, \\
\overline P_{kl}^{c} (\underline c, \overline c,\underline s, \overline s, r):    &\hspace{0.5em} \overline c_{kl}^* =\max\{  c_{kl} : \eqref{bounding SOCP} \}, \\
\underline P_{kl}^{s}(\underline c, \overline c,\underline s, \overline s, r):  &\hspace{0.5em} \underline s_{kl}^* =\min\{  s_{kl} :  \eqref{bounding SOCP} \}, \\
\overline P_{kl}^{s} (\underline c, \overline c,\underline s, \overline s, r):    &\hspace{0.5em} \overline s_{kl}^* = \max\{ s_{kl} :  \eqref{bounding SOCP} \}.
\end{split}
\end{equation}  
For improved numerical stability, we update a variable bound only if it is improved by at least $10^{-3}$.
As an implementation note, since these problems are independent of each other for different edges, they can be solved in parallel. According to our experiments, this synchronous parallelization saves a significant amount of computational time.
%Note that {\eqref{bounding SOCP all} for each edge $(k,l)$} can be solved in parallel, since they are independent of each other.
%We {observed} that a good tradeoff between accuracy and speed is to select $r=2$. In our experiments, larger values of $r$ improve variable bounds marginally.

For artificial edges, it is not possible to use the above procedure as they do not appear in the flow balance constraints. However, we can utilize the bounds already computed for the original edges to obtain some bounds for the variables defined for  the artificial edges by adopting the procedure proposed in \cite{kocuk2015}.

\subsubsection{Dual-Based Bound Tightening}

Let us suppose that the problems \eqref{bounding SOCP all} have been solved and we have updated the variable bounds on $c$ and $s$ variables to $\underline{c}^*$, $\overline{c}^*$, $\underline{s}^*$ and $\overline{s}^*$. Note that while solving these problems, the existing variable bounds are used, that is, the bounds are not updated. A simple way to incorporate the change in one problem to another is to use the dual variables. Related methods have been used by LP based global solvers~\cite{ts:05}. Let us now formally explain how this can be accomplished.

%First, let us denote by $L_i(\underline {z},\overline {z})$ and $U_i(\underline {z},\overline {z})$  the lower and upper bounding problem of the variable $z_i$ with the current bounds $\underline {z}$ and $\overline {z}$ for each variable. Next, we define the dual variable corresponding to a lower bound constraint $z_j \ge \underline{z}_j$ as $\underline \pi_j^{L_i}$ and $\underline \pi_j^{U_i}$ for the problems $L_i$ and $U_i$, respectively. Similarly, define the dual variable corresponding to an upper bound constraint $z_j \le \overline{z}_j$ as $\overline \pi_j^{L_i}$ and $\overline \pi_j^{U_i}$.

Consider the problem $\underline P_{kl}^c$.  Let $\underline \pi_{kl}^c(\underline c_{ij})$, $\underline \pi_{kl}^c(\overline c_{ij})$, $\underline \pi_{kl}^c(\underline s_{ij})$ and
 $\underline \pi_{kl}^c(\overline s_{ij})$ be the optimal dual variables  corresponding to the constraints 
$c_{ij} \ge \underline c_{ij}$, $c_{ij} \le \overline c_{ij}$,  $s_{ij} \ge \underline s_{ij}$ and $s_{ij} \le \overline s_{ij}$, respectively.
First, we calculate the contribution of the constraints other than the bounds on the dual objective as
\begin{equation} 
\Pi_{kl}^c =  \underline c_{kl}^* - \sum_{(i,j) \in \in \mathcal{L}_{kl} (r)} \left( 
\underline c_{ij}\underline \pi_{kl}^c(\underline c_{ij})  + \overline c_{ij}\underline \pi_{kl}^c(\overline c_{ij}) + 
 \underline s_{ij}\underline \pi_{kl}^c(\underline s_{ij})+ \overline s_{ij} \underline \pi_{kl}^c(\overline s_{ij})
\right).
\end{equation}

Now, since the bounds on $c_{ij}$ and $s_{ij}$ variables are improved via their own bounding problems $\underline P_{ij}^{c}$, $\overline P_{ij}^{c}$,  $\underline P_{ij}^{s}$ and $\overline P_{ij}^{s}$, the lower bound on $c_{kl}$ can be updated as follows:
\begin{equation}
\underline c_{kl}^* = \max \bigg\{ \underline c_{kl}^*, \ \Pi_{kl}^c + \sum_{(i,j) \in \in \mathcal{L}_{kl} (r)} \left( 
\underline c_{ij}^*\underline \pi_{kl}^c(\underline c_{ij})  + \overline c_{ij}^*\underline \pi_{kl}^c(\overline c_{ij}) + 
 \underline s_{ij}^*\underline \pi_{kl}^c(\underline s_{ij})+ \overline s_{ij}^* \underline \pi_{kl}^c(\overline s_{ij})
\right) \bigg\}.
\end{equation}
Similarly, using the dual variables from  $\overline P_{kl}^{c}$,  $\underline P_{kl}^{s}$ and $\overline P_{kl}^{s}$, we may try to tighten $\overline c_{kl}^*$, $\underline s_{kl}^*$ and $\overline s_{kl}^*$ further.

\section{SOCP Based Spatial Branch-and-Cut Method}
\label{sec:alg socp}

In Section \ref{sec:rank constrained}, we proposed several convexification techniques for the minor (or equivalently, rank) constrained OPF problem including convex and concave envelopes and cutting planes. In this section, we will show how they can be used in a relaxation scheme. In Section \ref{sec:alg socp root}, we develop an algorithm which can be used  stand-alone as a cutting plane approach to find dual bounds for the OPF problem. It can also be treated as the root node relaxation of the SOCP based spatial branch-and-cut algorithm proposed in Section \ref{sec:alg socp bb}. Finally,  Section \ref{sec:alg implement} presents the implementation details of the spatial branch-and-cut algorithm.

\subsection{Root Node Relaxation}
\label{sec:alg socp root}

The computational experiments will be based on an SOCP relaxation of the OPF problem. Let $\text{SOCP}(\underline c, \overline c,\underline s, \overline s, \mathcal{H})$ denote this relaxation constructed by using the variable bounds $\underline c$, $\overline c$, $\underline s$ and $\overline s$, and a set of cutting planes of the form $\alpha_h^T\begin{bmatrix} c \\ s \end{bmatrix}  \ge \beta_h $ from an index set $h\in\mathcal{H}$ obtained by solving  separation problems. The full model is defined as follows:
\begin{subequations} \label{SOCP relax}
\begin{align}
& \hspace{-0.7cm} \text{SOCP}(\underline c, \overline c,\underline s, \overline s, \mathcal{H}): \notag \\
\min  &\hspace{0.5em}  \sum_{i \in \mathcal{G}} C_i(p_i^g) \\
  \mathrm{s.t.}   
 &\hspace{0.5em} p_i^g-p_i^d = G_{ii}c_{ii} + \sum_{j \in \delta(i)}[ G_{ij}c_{ij} -B_{ij}s_{ij}]   & i& \in\mathcal{B},\\
  & \hspace{0.5em} q_i^g-q_i^d = -B_{ii}c_{ii} + \sum_{j \in \delta(i)}[ -B_{ij}c_{ij} -G_{ij}s_{ij}]  & i& \in \mathcal{B}, \\
%  & \hspace{0.5em}  [-G_{ij}c_{ii} + G_{ij}c_{ij} -B_{ij}s_{ij}]^2+[B_{ij}c_{ii}-B_{ij}c_{ij} -G_{ij}s_{ij}]^2  \le (S_{ij}^{\text{max}})^2  &(&i,j) \in %\mathcal{L} \label{powerOnArcR} \\
  & \hspace{0.5em} \underline V_i^2 \le c_{ii} \le \overline V_i^2    & i& \in \mathcal{B}, \\
  & \hspace{0.5em} c_{ij}=c_{ji}, \ \ s_{ij}=-s_{ji}    &(&i,j) \in \mathcal{L}\\
  & \hspace{0.5em}  [-G_{ij}c_{ii} + G_{ij}c_{ij} -B_{ij}s_{ij}]^2+[B_{ij}c_{ii}-B_{ij}c_{ij} -G_{ij}s_{ij}]^2  \le (S_{ij}^{\text{max}})^2  &(&i,j) \in \mathcal{L}, \label{powerOnArcReform} \\
  & \hspace{0.5em}  c_{ij}^2+s_{ij}^2  \le c_{ii}c_{jj}     &(&i,j) \in \mathcal{L}, \\
%  & \hspace{0.5em}\nu_{ij}^n+\eta_{ij}^n c_{ij} + \delta_{ij}^n s_{ij} \ge \nu_{ij}^m+\eta_{ij}^m c_{ii} + \delta_{ij}^m c_{jj}  &(&i,j) \in \mathcal{L}_{kl} (r); m=1,2; n=3,4 \label{edge minor ineq} \\
  & \hspace{0.5em} \text{EC}_{ij}(\underline c, \overline c,\underline s,\overline s) &(&i,j) \in \mathcal{L}, \label{edge minor ineq}  \\
%  & \hspace{0.5em} \bar \gamma_{ij}^m + \alpha_{ij}^m c_{ij} + \beta_{ij}^m s_{ij} \le  \theta_j - \theta_i  \le \bar \gamma_{ij}^n + \alpha_{ij}^n c_{ij} + \beta_{ij}^n s_{ij}  &(&i,j) \in \mathcal{L}_{kl} (r); m= 1,2; n = 3,4
  & \hspace{0.5em} \text{AT}_{ij}(\underline c, \overline c,\underline s,\overline s) &(&i,j) \in \mathcal{L}, \label{ineq arctan}  \\
  & \hspace{0.5em}\underline{\theta}_{ij} \le \theta_i-\theta_j \le \overline{\theta}_{ij} &(&i,j) \in \mathcal{L}, \label{angle theta bounds}  \\
  & \hspace{0.5em} \underline c_{ij} \le c_{ij}  \le \overline c_{ij}, \ \ \underline s_{ij} \le s_{ij}  \le \overline s_{ij}   &(&i,j) \in \mathcal{L}, \\
  & \hspace{0.5em} \alpha_h^T\begin{bmatrix} c \\ s \end{bmatrix}  \ge \beta_h  &h& \in \mathcal{H}, \\
   & \hspace{0.5em}  \eqref{activeAtGenerator} - \eqref{reactiveAtGenerator}.  \notag
\end{align}
\end{subequations}

Our approach heavily depends on tightening the variable bounds and enriching the set of cutting planes so that SOCP relaxation gets tightened. The main steps of this root node relaxation algorithm  is summarized in Algorithm \ref{alg:root}.
\begin{algorithm}
\caption{Root node relaxation.}
\label{alg:root}
\begin{algorithmic}
\STATE $LB = -\infty$,  $UB = \infty$, $\mathcal{C}=\emptyset$, $\mathcal{H}=\emptyset$, $t = 0$
\STATE Use a local solver to find a feasible solution and update $UB$.
\STATE Obtain a cycle basis $C_b$ and set $\mathcal{C} = C_b$.
\STATE Solve bound tightening problems $\underline P_{kl}^{c}$, $\overline P_{kl}^{c}$, $\underline P_{kl}^{s}$ and $\overline P_{kl}^{s}$ for all $(k,l)\in \mathcal{L}$ with $r_1$ and apply dual improvement.
\WHILE{$t < T$ and $LB < (1-\epsilon)UB$}
\IF{$t < T_C$ and $|\mathcal{B}| \le B_{\text{max}}$}
\STATE Enlarge $\mathcal{C}$ by adding more cycles.
\ENDIF
\STATE Solve bound tightening problems $\underline P_{kl}^{c}$, $\overline P_{kl}^{c}$, $\underline P_{kl}^{s}$ and $\overline P_{kl}^{s}$ for all $(k,l)\in \mathcal{L}$ with $r_2$ and apply dual improvement.
\STATE Solve $\text{SOCP}(\underline c, \overline c,\underline s, \overline s, \mathcal{H})$ to obtain a solution $(c^*,s^*)$ and update $LB$.
\STATE Solve $\text{SEP}(\mathcal{S}_C, c^*,s^*)$ and/or $\text{SEP}(\mathcal{M}_C^D, c^*,s^*)$ to obtain a set of cutting planes $H_t$ for all $C \in \mathcal{C}$.
\STATE Update $\mathcal{H}= \mathcal{H} \cup H_t$.
\STATE Set $t = t+1$.
\ENDWHILE
\end{algorithmic}
\end{algorithm}

For small ($|\mathcal{B}| \le B_{\text{max}}$) and challenging instances, we generate new cycles for $T_{C}$ many rounds using every pair of distinct cycles in the current set $\mathcal{C}$  which share at least one common line.

\subsection{Spatial Branch-and-Cut Algorithm}
\label{sec:alg socp bb}

Algorithm \ref{alg:root} is quite successful in proving strong dual bounds for many  instances from the NESTA archive as the numerical experiments in Section \ref{results:root node relax} show. Nevertheless, the optimality gap may be more than an acceptable threshold for some of the more challenging instances, for which we propose an  SOCP based spatial branch-and-cut algorithm. The main steps can be seen in Algorithm \ref{alg:sbb}.

Our approach is built on the following principles:
\begin{enumerate}
\item  Branching: In our approach, we decide a transmission line $(i,j)$ and branch on either $c_{ij}$ and $s_{ij}$. This branching rule allows us to update convex approximations to both EC$_{ij}$ and AT$_{ij}$. We pick the line to be branched on node $L$ of the branch-and-bound tree as follows:
\begin{equation}
line_L = \max_{(i,j) \in \mathcal{L}} \left |\theta_j-\theta_i-\arctan \left( \frac{s_{ij}}{c_{ij}} \right) \right|.
\end{equation}
Then, among $c_{ij}$ and $s_{ij}$, we choose the variable whose smallest distance to the boundary is the largest. In particular, if $\min\{c_{ij} - \underline c_{ij}, \overline c_{ij} -  c_{ij} \} \ge  \min\{s_{ij} - \underline s_{ij}, \overline s_{ij} -  s_{ij} \}$, then $c_{ij}$ is chosen; otherwise, $s_{ij}$ is chosen.
 Finally, we use bisection-branching to partition the space \cite{BARON}.
% either $ \omega-$branching or

\item Local bound tightening: Since branching on a variable $c_{ij}$ or $s_{ij}$ reduces the variable range, other variables which correspond to the nearby lines to the branched line can be improved as well. Therefore, we solve the bound tightening problems for such lines in our algorithm.

\item Node selection: Since our aim is to reduce the duality gap on the problem, we choose the node with the smallest node relaxation value and carry out the branching.

\item Cutting plane generation: We keep on generating cutting planes to separate relaxation solutions. To be computationally efficient, we only solve the separating problems for  the cycles at hand which contains the branched line.
\end{enumerate}

\begin{algorithm}
\caption{Spatial branch-and-cut.}
\label{alg:sbb}
\begin{algorithmic}
\STATE Let $LB$,  $UB$, $\mathcal{C}$ and $\mathcal{H}$ be computed from Algorithm \ref{alg:root}. 
\STATE Set $list=\{root\}$.
%\STATE Obtain a cycle basis $C_b$.
%\STATE Solve bound tightening problems $BT({ij})$ for all $(i,j)\in \mathcal{L}$ to obtain $\underline z_{root}, \overline z_{root}$.
% \FORALL{$t=1, \dots, T$}
%\STATE Solve $SOCP(\underline z_{root}, \overline z_{root}, V_{root})$ to obtain a solution $z_{root}^*$ and update $LB=LB_{root}$.
%\STATE Solve $SEP_C(\underline z_{root}, \overline z_{root}, z_{root}^*)$ to obtain a set of cutting planes $V_t$ for $C \in C_b$.
%\STATE Update $V_{root} = V_{root} \cup V_t$.
%\ENDFOR

\WHILE{$|list| > 0$}
\STATE $LB = \min_{ l \in list }LB_l$ and $L = \argmin _{ l \in list }LB_l$.
\STATE $list = list \setminus \{L\}$.
%\STATE Solve $OPF(z_{L})$ to obtain a feasible solution and if possible, update $UB$.
\IF{$LB \ge (1-\epsilon)UB$}
\STATE STOP.
\ENDIF

\STATE Solve bound tightening problems $\underline P_{kl}^{c}$, $\overline P_{kl}^{c}$, $\underline P_{kl}^{s}$ and $\overline P_{kl}^{s}$ for all $(k,l)$ near  ${line_{parent(L)}}$ with $r_2$ and apply dual improvement.
%\STATE Set $V_L =  V_{parent(L)}$.
\STATE Solve $\text{SEP}(\mathcal{S}_C, c^*,s^*)$ and/or $\text{SEP}(\mathcal{M}_C^D, c^*,s^*)$ to obtain a set of cutting planes $H_t$ for all $C \in \mathcal{C}$ such that $line_{parent(L)} \in C$.
\STATE Update $\mathcal{H}_L = \mathcal{H}_L \cup {H}_t$.
\STATE Solve $\text{SOCP}(\underline c, \overline c,\underline s, \overline s, \mathcal{H})$ to obtain a solution $(c^*,s^*)$ and update $LB$.

\STATE Decide on a transmission line $line_L$ to branch on.
\STATE Obtain two children $L_1$ and  $L_2$ by updating variable bounds, EC and AT.
\STATE $list = list \cup \{L_1, L_2\}$.

\ENDWHILE

\end{algorithmic}
\end{algorithm}

\subsection{Implementation} 
\label{sec:alg implement}

In Algorithm \ref{alg:sbb}, SOCP relaxation of each node can be constructed from scratch given the following four pieces of information:
\begin{enumerate}
\item variable bounds,
\item its parent's relaxation solution,
\item transmission line   branched on, and
\item the valid inequalities of its parent.
\end{enumerate}
Therefore, a direct implementation can be obtained by explicit tree handling as long as the parent inherits this set of information.

This implementation is a reasonable attempt since, unlike LPs, there is no efficient warm-start availability for SOCPs. There are also some disadvantages: For instance, the proposed implementation requires the construction of each problem from scratch and explicit tree handling. Although the data needed to be stored at each node is limited, there may be some issues for large problems.

In this implementation, the overhead is the solution of SOCPs at each node of the branch-and-bound tree. We prefer to use MOSEK in this implementation since it is an efficient conic interior point solver.

Finally,  bound tightening and separation problems are parallelized to reduce the  total computational time.

\section{Computational Experiments}
\label{sec: comp exper}

In this section, we present the results of our extensive computational experiments from NESTA 0.3.0 archive \cite{nesta} with Typical, Congested and Small Angle Operating Conditions. We are particularly interested in this set of instances due to their difficulty level, as explained below. Our main code is written in the C\# language with Visual Studio 2010 as the compiler. For comparison purposes, we use OPF Solver \cite{OPFSolver} to solve the SDP relaxation of the OPF problem. This MATLAB package exploits sparsity of the power networks to efficiently solve large-scale SDP problems  \cite{madani2014, madani2015}. We modified the code slightly to incorporate phase angle difference constraints.
 For all experiments, we used a 64-bit computer with Intel Core i5 CPU 2.50GHz processor and 16 GB RAM.  Time is measured in seconds, unless otherwise stated. Conic interior point solver MOSEK 8 \cite{MOSEK} is used to solve LPs, SOCPs and SDPs in our main algorithms. OPF Solver is run with MOSEK and SDPT3.

\subsection{Methods}

We run our algorithms with different settings as to cutting plane generation procedures:
\begin{itemize}
\item
McCormick Separation ($\mathsf{SEP(M)}$): We only separate the point from $\mathcal{M}_C^D$ defined in \eqref{def disc McC}.
\item
SDP Separation ($\mathsf{SEP(S)}$): We only separate the point from $\mathcal{S}_C$ defined in \eqref{eq:sdplin W relax}.
\item
SDP + McCormick Separation ($\mathsf{SEP(M, S)}$): We  separate the point from $\mathcal{M}_C^D$ and~$\mathcal{S}_C$.
\end{itemize}
We use a fixed cycle basis to generate cutting planes for most instances. For small ($|\mathcal{B}| \le B_{\text{max}} = 118$) and difficult instances, we enlarge the set of cycles to obtain more cuts.
After initial calibration, we decide to set the number of bound tightening rounds $T$ to 5, the number of cycle addition rounds $T_C$ to 1,  the initial radius $r_1$ to 2, the later radius $r_2$ to 4, and the optimality tolerance $\epsilon$  to $10^{-3}$.
We also employ coefficient rounding for SDP cuts to improve numerical stability.

The OPF Solver code is modified to incorporate the phase angle bounds in NESTA instances by adding the following constraints:
\begin{equation}
 \Im(X_{ij}) - \tan \overline \theta_{ij} \Re(X_{ij}) \le 0 \quad \text{ and } \quad 
 \Im(X_{ij}) - \tan \underline \theta_{ij} \Re(X_{ij}) \ge 0.
\end{equation}
%\begin{verbatim}
%for i = 1 : nl
%  if(angmax(i) < pi /2)
%    imag(W(fbusN(i),tbusN(i))) - tan(angmax(i)) * real(W(fbusN(i),tbusN(i))) <= 0;
%    imag(W(fbusN(i),tbusN(i))) - tan(angmin(i)) * real(W(fbusN(i),tbusN(i))) >= 0;
%  end
%end
%\end{verbatim}
We run the OPF Solver with two solvers:
\begin{itemize}
\item MOSEK 
\item SDPT3 
\end{itemize}

We also compare the three approaches proposed in this paper to our previous paper \cite{kocuk2015}, which utilizes strong SOCP relaxations. For Typical and Congested Operating Conditions, we use a simplified version of  $\mathsf{SEP(S)}$ and for Small Angle Operating Condition, we again use a simpler version of $\mathsf{SEP(M)}$, which proved to be the best setting in that paper. 

%In the following, we compare our three methods with the plain SOCP and  SDP relaxation based approaches in both root node relaxation and after branching. 
%We only used the instances from the NESTA archive with at least $1\%$ SOCP optimality gap since we are only interested in the challenging instances.

%We note that $\mathsf{SOCPA}$ and $\mathsf{S34A}$ require preprocessing to improve variable bounds on {the} $c$ and $s$ {variables as developed in Section \ref{sec:bounding}.} % by solving bounding SOCPs over \eqref{bounding SOCP}. 
%This process is parallelized but still it constitutes a sizable portion in the computational cost of the method. 
%Constraint generation procedures are also parallelized, since each separation problem is defined for a different cycle in the cycle basis. We use a Gaussian elimination based approach to construct a cycle basis proposed in \cite{kocuk2014switch}.  We repeat the constraint generation algorithm for five iterations or terminate when there are no cuts to be added.

\subsection{Comparison to the SOCP and SDP Relaxation}

We compared the relaxation values obtained from our approach to the plain SOCP, strong SOCP, and SDP relaxations in terms of the optimality gap, which is calculated as follows: $\text{\%gap} = 100 \times \frac{z^{\text{UB}} - z^{\text{LB}}}{z^{\text{UB}}}$. Here, $z^{\text{LB}}$ is the optimal objective cost of a relaxation and $z^{\text{UB}}$ is the objective cost of a  feasible solution obtained by MATPOWER \cite{Matpower}.

\subsubsection{Root Node Relaxation}
\label{results:root node relax}

In this section, we present the computational results for Typical, Congested, and Small Angle Operating Condition instances in Tables \ref{table:rootTYP}-\ref{table:rootSAD}, respectively. Also, we provide a scatter plot Figure \ref{figure:root results}, which visualizes the average percentage optimality gap and computational times.

\begin{landscape}
% Table generated by Excel2LaTeX from sheet 'SummaryFormattedPaper'
\begin{table}
\centering
\begin{tabular}{|c|r|r|r|r|r|r|r|r|r|r|r|r|}
\hline
           & \multicolumn{ 2}{|c|}{SOCP} & \multicolumn{ 2}{|c|}{SDP} & \multicolumn{ 2}{|c|}{Best of \cite{kocuk2015}} & \multicolumn{ 2}{|c|}{$\mathsf{SEP(M)}$} & \multicolumn{ 2}{|c|}{$\mathsf{SEP(S)}$} & \multicolumn{ 2}{|c|}{$\mathsf{SEP(M, S)}$} \\
\hline
case           &      \%gap &   time (s) &      \%gap &   time (s) &      \%gap &   time (s) &      \%gap &   time (s) &      \%gap &   time (s) &      \%gap &   time (s) \\
\hline
     3lmbd &       1.32 &       0.06 &       0.39 &       1.00 &       0.43 &       0.14 &       0.10 &       1.09 &       0.09 &       1.12 &       0.10 &       0.95 \\
\hline
       4gs &       0.00 &       0.05 &       0.00 &       0.98 &       0.00 &       0.08 &       0.00 &       0.11 &       0.00 &       0.05 &       0.00 &       0.03 \\
\hline
      5pjm &      14.54 &       0.09 &       5.22 &       1.03 &       6.22 &       0.17 &       5.63 &       1.59 &       3.68 &       2.04 &       2.11 &       3.26 \\
\hline
       6ww &       0.63 &       0.02 &       0.00 &       1.30 &       0.00 &       0.44 &       0.02 &       0.51 &       0.01 &       0.75 &       0.01 &       1.08 \\
\hline
     9wscc &       0.00 &       0.06 &       0.00 &       1.03 &       0.00 &       0.08 &       0.00 &       0.09 &       0.00 &       0.06 &       0.00 &       0.09 \\
\hline
    14ieee &       0.11 &       0.05 &       0.00 &       1.36 &       0.00 &       0.53 &       0.03 &       0.80 &       0.00 &       1.31 &       0.00 &       2.70 \\
\hline
    29edin &       0.14 &       0.11 &       0.00 &       2.98 &       0.00 &       1.81 &       0.06 &       8.67 &       0.01 &      21.06 &       0.01 &      33.99 \\
\hline
      30as &       0.06 &       0.03 &       0.00 &       2.49 &       0.00 &       0.90 &       0.06 &       0.06 &       0.06 &       0.14 &       0.06 &       0.11 \\
\hline
     30fsr &       0.39 &       0.06 &       0.00 &       1.93 &       0.03 &       0.92 &       0.10 &       9.61 &       0.07 &       8.58 &       0.07 &      14.49 \\
\hline
    30ieee &      15.65 &       0.03 &       0.00 &       1.50 &       0.00 &       0.92 &       0.09 &      10.55 &       0.03 &      10.05 &       0.03 &      14.55 \\
\hline
    39epri &       0.05 &       0.09 &       0.01 &       2.21 &       0.01 &       0.67 &       0.05 &       0.14 &       0.05 &       0.09 &       0.05 &       0.25 \\
\hline
    57ieee &       0.06 &       0.06 &       0.00 &       3.05 &       0.00 &       1.73 &       0.06 &       0.11 &       0.06 &       0.25 &       0.06 &       0.22 \\
\hline
   118ieee &       2.10 &       0.17 &       0.07 &       6.31 &       0.25 &       4.67 &       0.42 &     115.50 &       0.14 &     228.60 &       0.14 &     355.50 \\
\hline
   162ieee &       4.19 &       0.17 &       1.12 &      17.93 &       3.50 &       9.19 &       2.14 &     373.38 &       1.56 &     666.08 &       1.57 &     948.30 \\
\hline
   189edin &       0.22 &       0.28 &       0.07 &       6.59 &       0.08 &       2.04 &       0.28 &      48.30 &       0.10 &      87.17 &       0.04 &      63.15 \\
\hline
   300ieee &       1.19 &       0.39 &       0.08 &      16.36 &       0.30 &       9.41 &       0.23 &     321.62 &       0.09 &     345.69 &       0.09 &     520.50 \\
\hline
    2383wp &       1.68 &       6.37 &       0.37 &     840.31 &       1.56 &      74.25 &       1.48 &      33.59 &       1.36 &      46.10 &       1.16 &     163.27 \\
\hline
    2736sp &       1.57 &       8.21 &       0.00 &    1265.13 &       1.42 &      91.07 &       1.82 &      49.62 &       1.07 &      90.00 &       0.87 &     165.61 \\
\hline
   2737sop &       6.54 &       4.68 &       0.00 &    1228.51 &       1.57 &      87.30 &       5.35 &      51.83 &       1.15 &     185.08 &       1.15 &     348.35 \\
\hline
   2746wop &      13.61 &       3.98 &       0.00 &    1329.97 &       1.50 &      91.39 &       2.40 &      66.19 &       2.57 &     140.07 &       2.57 &     273.07 \\
\hline
    2746wp &       2.48 &       6.44 &       0.00 &    1383.23 &       1.54 &      91.84 &       2.61 &      41.28 &       1.08 &      61.44 &       1.04 &     107.09 \\
\hline
\hline
   Average &       3.17 &       1.50 &       0.35 &     291.20 &       0.88 &      22.36 &       1.09 &      54.03 &       0.63 &      90.27 &       0.53 &     143.65 \\
\hline
\end{tabular}  
\caption{Root node relaxation results for Typical Operating Condition instances.}\label{table:rootTYP}
\end{table}
\end{landscape}

\begin{landscape}
% Table generated by Excel2LaTeX from sheet 'SummaryFormattedPaper'
\begin{table}
\centering
\begin{tabular}{|c|r|r|r|r|r|r|r|r|r|r|r|r|}
\hline
           & \multicolumn{ 2}{|c|}{SOCP} & \multicolumn{ 2}{|c|}{SDP} & \multicolumn{ 2}{|c|}{Best of \cite{kocuk2015}} & \multicolumn{ 2}{|c|}{$\mathsf{SEP(M)}$} & \multicolumn{ 2}{|c|}{$\mathsf{SEP(S)}$} & \multicolumn{ 2}{|c|}{$\mathsf{SEP(M, S)}$} \\
\hline
case           &      \%gap &   time (s) &      \%gap &   time (s) &      \%gap &   time (s) &      \%gap &   time (s) &      \%gap &   time (s) &      \%gap &   time (s) \\
\hline
     3lmbd &       3.30 &       0.05 &       1.26 &       0.93 &       1.31 &       0.14 &       0.81 &       0.62 &       0.78 &       1.68 &       0.81 &       1.05 \\
\hline
       4gs &       0.65 &       0.02 &       0.00 &       0.88 &       0.00 &       0.09 &       0.05 &       0.78 &       0.03 &       0.47 &       0.03 &       0.55 \\
\hline
      5pjm &       0.45 &       0.05 &       0.45* &       1.03 &       0.00 &       0.20 &       0.09 &       0.41 &       0.05 &       0.76 &       0.05 &       0.81 \\
\hline
       6ww &      13.33 &       0.03 &       0.00 &       1.04 &       0.00 &       0.48 &       0.00 &       1.18 &       0.00 &       1.87 &       0.00 &       3.39 \\
\hline
     9wscc &       0.00 &       0.05 &       0.00 &       0.93 &       0.00 &       0.11 &       0.00 &       0.08 &       0.00 &       0.08 &       0.00 &       0.06 \\
\hline
    14ieee &       1.35 &       0.06 &       0.00 &       1.06 &       0.00 &       0.58 &       0.14 &       8.67 &       0.03 &       7.16 &       0.04 &      13.18 \\
\hline
    29edin &       0.44 &       0.11 &       0.44* &       2.90 &       0.03 &       1.96 &       0.08 &      66.27 &       0.04 &      93.01 &       0.04 &     136.83 \\
\hline
      30as &       4.76 &       0.06 &       0.00 &       1.96 &       1.72 &       1.00 &       0.11 &      25.06 &       0.08 &      37.44 &       0.09 &      62.09 \\
\hline
     30fsr &      45.97 &       0.06 &      11.06 &       1.90 &      40.22 &       1.00 &       9.91 &      27.92 &       5.13 &      50.09 &       5.15 &      90.56 \\
\hline
    30ieee &       0.99 &       0.05 &       0.00 &       2.32 &       0.08 &       0.98 &       0.18 &      24.04 &       0.06 &      37.84 &       0.06 &      60.03 \\
\hline
    39epri &       2.99 &       0.05 &       0.00 &       2.55 &       0.00 &       0.73 &       0.09 &       9.44 &       0.01 &      17.14 &       0.01 &      26.33 \\
\hline
    57ieee &       0.21 &       0.06 &       0.08 &       2.69 &       0.13 &       1.82 &       0.20 &      13.95 &       0.06 &      84.38 &       0.06 &     125.44 \\
\hline
   118ieee &      44.19 &       0.14 &      31.53 &       7.47 &      39.09 &       5.07 &      14.38 &     285.36 &       7.91 &     517.63 &       7.83 &     911.90 \\
\hline
   162ieee &       1.52 &       0.23 &       1.00 &      21.43 &       1.20 &       9.88 &       1.17 &     617.04 &       1.03 &    1393.66 &       1.03 &    2007.66 \\
\hline
   189edin &       5.88 &       0.22 &       0.05 &       6.53 &       3.82 &       2.28 &       1.12 &     212.12 &       0.89 &     444.09 &       0.91 &     592.86 \\
\hline
   300ieee &       0.85 &       0.39 &       0.00 &      14.65 &       0.15 &       9.94 &       0.22 &     571.77 &       0.10 &     735.95 &       0.10 &    1048.07 \\
\hline
    2383wp &       0.89 &       2.07 &       0.10 &     857.87 &       0.00 &      58.46 &       0.99 &      69.62 &       0.40 &     208.76 &       0.40 &     592.17 \\
\hline
    2736sp &       2.13 &       2.73 &       0.07 &    1439.24 &       0.72 &      66.89 &       1.57 &      65.91 &       1.32 &     109.79 &       1.32 &     308.40 \\
\hline
   2737sop &       1.08 &       2.93 &       0.01 &    1203.03 &       0.31 &      63.74 &       1.10 &      40.59 &       0.65 &     268.75 &       0.65 &     636.37 \\
\hline
   2746wop &       0.52 &       2.70 &       0.00 &    1413.02 &       0.00 &      67.87 &       0.51 &      42.65 &       0.42 &     118.05 &       0.42 &     296.78 \\
\hline
    2746wp &       0.59 &       2.96 &       0.00 &    1457.02 &       0.00 &      74.26 &       0.64 &      43.13 &       0.21 &     174.71 &       0.71 &     153.60 \\
\hline
\hline
   Average &       6.29 &       0.72 &       2.19 &     306.69 &       4.23 &      17.50 &       1.59 &     101.27 &       0.91 &     204.92 &       0.94 &     336.58 \\
\hline
\end{tabular}  

\caption{Root node relaxation results for Congested Operating Condition instances. *: Numerical difficulties are encountered for the SDP relaxation, the resuts  from SOCP relaxation is used instead.}\label{table:rootAPI}
\end{table}
\end{landscape}

\begin{landscape}
% Table generated by Excel2LaTeX from sheet 'SummaryFormattedPaper'
\begin{table}
\centering
\begin{tabular}{|c|r|r|r|r|r|r|r|r|r|r|r|r|}
\hline
           & \multicolumn{ 2}{|c|}{SOCP} & \multicolumn{ 2}{|c|}{SDP} & \multicolumn{ 2}{|c|}{Best of \cite{kocuk2015}} & \multicolumn{ 2}{|c|}{$\mathsf{SEP(M)}$} & \multicolumn{ 2}{|c|}{$\mathsf{SEP(S)}$} & \multicolumn{ 2}{|c|}{$\mathsf{SEP(M, S)}$} \\
\hline
case           &      \%gap &   time (s) &      \%gap &   time (s) &      \%gap &   time (s) &      \%gap &   time (s) &      \%gap &   time (s) &      \%gap &   time (s) \\
\hline
     3lmbd &       4.28 &       0.05 &       2.06 &       1.15 &       1.52 &       0.10 &       0.11 &       1.02 &       0.28 &       0.64 &       0.09 &       1.29 \\
\hline
       4gs &       4.90 &       0.02 &       0.05 &       1.01 &       0.03 &       0.11 &       0.01 &       0.20 &       0.01 &       0.22 &       0.01 &       0.66 \\
\hline
      5pjm &       3.61 &       0.03 &       0.00 &       1.11 &       0.39 &       0.14 &       0.07 &       0.34 &       0.08 &       0.36 &       0.07 &       0.94 \\
\hline
       6ww &       0.80 &       0.02 &       0.00 &       1.34 &       0.02 &       0.27 &       0.00 &       0.51 &       0.00 &       0.66 &       0.00 &       1.53 \\
\hline
     9wscc &       1.50 &       0.05 &       0.00 &       1.02 &       0.43 &       0.20 &       0.01 &       0.44 &       0.01 &       0.45 &       0.01 &       1.14 \\
\hline
    14ieee &       0.07 &       0.03 &       0.00 &       1.39 &       0.06 &       0.44 &       0.06 &       0.03 &       0.06 &       0.06 &       0.06 &       0.16 \\
\hline
    29edin &      34.47 &       0.09 &      28.44 &       2.51 &      21.92 &       4.79 &       0.90 &      63.05 &       0.80 &     168.42 &       0.70 &     325.68 \\
\hline
      30as &       9.16 &       0.08 &       0.47 &       1.82 &       2.47 &       1.26 &       0.14 &      11.64 &       0.09 &      20.67 &       0.09 &      38.85 \\
\hline
     30fsr &       0.62 &       0.09 &       0.07 &       2.19 &       0.29 &       1.12 &       0.13 &      10.44 &       0.09 &      14.01 &       0.09 &      26.57 \\
\hline
    30ieee &       5.87 &       0.08 &       0.00 &       2.41 &       2.04 &       1.01 &       0.08 &       5.30 &       0.02 &      13.78 &       0.02 &      26.78 \\
\hline
    39epri &       0.11 &       0.03 &       0.09 &       2.25 &       0.09 &       1.02 &       0.03 &       2.67 &       0.02 &       6.01 &       0.02 &      11.54 \\
\hline
    57ieee &       0.11 &       0.08 &       0.02 &       2.92 &       0.10 &       1.83 &       0.08 &       6.07 &       0.07 &      19.81 &       0.07 &      36.75 \\
\hline
   118ieee &      12.88 &       0.17 &       7.55 &       6.03 &       7.41 &       6.20 &       4.57 &     122.62 &       3.35 &     375.91 &       3.35 &     748.42 \\
\hline
   162ieee &       7.06 &       0.14 &       3.56 &      20.66 &       5.86 &      14.30 &       4.12 &     418.86 &       3.77 &    1123.51 &       3.76 &    1741.94 \\
\hline
   189edin &       2.36 &       0.25 &       1.20 &       6.70 &       2.33 &       5.44 &       4.04 &      56.13 &       1.04 &     346.65 &       1.41 &     315.67 \\
\hline
   300ieee &       1.27 &       0.39 &       0.13 &      15.61 &       0.71 &      16.75 &       0.23 &     345.04 &       0.11 &     773.31 &       0.10 &    1226.36 \\
\hline
    2383wp &       5.46 &       5.26 &       1.30 &     850.43 &       3.67 &     554.44 &       3.56 &      86.25 &       3.08 &     142.53 &       3.08 &     550.16 \\
\hline
    2736sp &       3.47 &       6.49 &       0.69 &    1415.42 &       2.02 &     676.52 &       2.77 &      57.89 &       3.07 &     113.58 &       3.81 &     448.04 \\
\hline
   2737sop &       3.63 &       7.27 &       1.00 &    1298.22 &       3.55 &     694.33 &       6.82 &      38.16 &       4.56 &      71.45 &       5.37 &     153.54 \\
\hline
   2746wop &       4.32 &       7.10 &       1.20 &    1448.25 &       3.94 &     772.76 &       6.36 &      40.00 &       3.98 &      67.80 &       4.56 &     150.09 \\
\hline
    2746wp &       3.76 &       6.38 &       0.43 &    1327.80 &       2.73 &     811.51 &       3.50 &      42.57 &       4.20 &     129.86 &       2.74 &     155.73 \\
\hline
\hline
   Average &       5.22 &       1.62 &       2.30 &     305.25 &       2.93 &     169.74 &       1.79 &      62.34 &       1.37 &     161.41 &       1.40 &     283.90 \\
\hline
\end{tabular}

\caption{Root node relaxation results for Small Angle Operating Condition instances.}\label{table:rootSAD}
\end{table}
\end{landscape}

%% \cline{2-8}

Tables \ref{table:rootTYP}-\ref{table:rootSAD} summarize the results of our three methods applied only to the root node relaxation. We see that $\mathsf{SEP(M)}$ approach is both the most efficient and the  weakest in terms of the optimality gap proven among the three methods whereas $\mathsf{SEP(M, S)}$ approach takes the longest computational times but provides the strongest relaxations overall. The accuracy of $\mathsf{SEP(S)}$ approach is very close to $\mathsf{SEP(M, S)}$  with lower computational cost.
%However, we should point out that this comes with an issue of numerical difficulties. Cutting planes obtained from the set $\mathcal{S}_C$ are not as reliable as the ones obtained from $\mathcal{M}_C^D$ since an SDP problem has to be solved in the former  whereas only an LP is solved in the latter. Therefore, in $\mathsf{SEP(S)}$ and $\mathsf{SEP(M, S)}$ methods, respectively two and one large instances experience numerical issues. 

%\begin{figure}
%\centering
%\begin{tikzpicture} 
%\begin{axis}[align =center, nodes near coords,enlargelimits=0.2, xlabel={Average \% Gap}, ylabel={Average Time (s)}]
%	\addplot+[nodes near coords, only marks, point meta=explicit symbolic] 
%	coordinates { 
%			(1.66, 142.27) [$\mathsf{SEP(M)}$] 
%			(1.17, 166.58) [{ }{ }{  $\mathsf{SEP(S)}$}] 
%			(1.08, 170.39) [{ }$\mathsf{SEP(M, S)}$] 
%			%(3.52, 257.67) [MOSEK] 
%			(2.87, 307.31) [{\small SDP Relaxation } { } { }] 
%	}; 
%\end{axis} 
%\end{tikzpicture}
%\caption{Scatter plot for root node relaxation results.}\label{figure:root results}
%\end{figure}

\begin{figure}
\centering
\begin{tikzpicture} 
\begin{axis}[
    xlabel={Average \% Gap},
    ylabel={Average Time (s)},]
\addplot[blue,mark=*,mark options={fill=blue},nodes near coords,only marks,
   point meta=explicit symbolic,
   visualization depends on={value \thisrow{anchor}\as\myanchor},
   every node near coord/.append style={anchor=\myanchor}] table[meta=label] {
x y label anchor
4.89	1.28 {SOCP} east
1.61	301.05 {SDP} west
2.68	69.87   {Best of \cite{kocuk2015}} west
1.49	72.55 {$\mathsf{SEP(M)}$} north
0.97	152.20 {{ }  $\mathsf{SEP(S)}$} north
0.96	254.71 {{ } { } { } $\mathsf{SEP(M, S)}$} north
};
\end{axis}
\end{tikzpicture}
\caption{Scatter plot for root node relaxation results.}\label{figure:root results}
\end{figure}
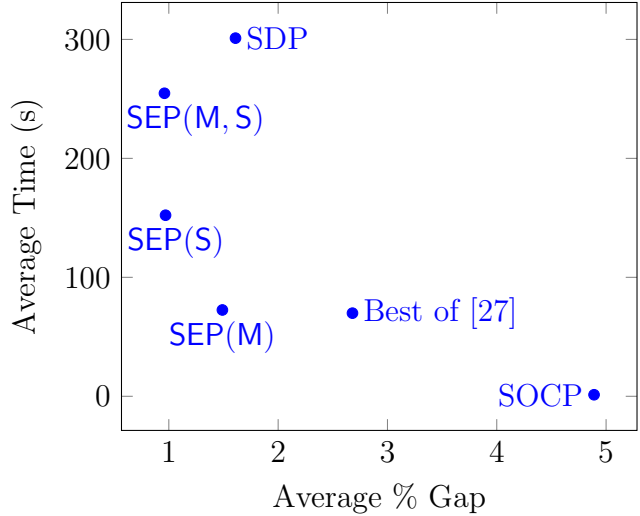

Figure \ref{figure:root results} compares our three methods against SDP Relaxation solved using OPF Solver with SDPT3 chosen as the solver. Although MOSEK is much faster with 172.66 seconds on the average  than SDPT3 with 301.05 seconds, SDPT3 provides more accurate solutions with 1.61\% optimality gap on the average while the optimality gap for MOSEK is 1.98\%. Therefore, we will base our comparisons with SDPT3 results. We can easily see that all our methods, $\mathsf{SEP(M)}$, $\mathsf{SEP(S)}$ and $\mathsf{SEP(M, S)}$, dominate OPF Solver. 
In particular, $\mathsf{SEP(S)}$ and $\mathsf{SEP(M, S)}$ methods are about two times more accurate than OPF Solver in terms of the average optimality gap proven. The computational costs of $\mathsf{SEP(S)}$ and $\mathsf{SEP(M, S)}$ are about 50\% and 15\% less than OPF Solver. 
 We would like to emphasize the success of purely LP and SOCP based method $\mathsf{SEP(M)}$ here. Although it provides the weakest relaxation among our three approaches, it is still  stronger than a purely SDP based approach in about $25\%$ of the computational time. 
In general, we should note that our approaches are much faster on large instances,  more accurate on hard instances, and comparable to the SDP relaxation on small or easy instances.

We also compare the three approaches proposed in this paper to our previous paper \cite{kocuk2015}. The approaches in our previous paper are typically faster than the ones proposed in the current paper, however, the optimality gaps are about 2-3 times worse. This shows the significant improvement from our previous work.

%Also, it is the most  stable approach in terms of the numerical issues.

\subsubsection{Effect of Branching}

In this section, we present the detailed  branch-and-cut  results  for  the instances which are not solved within the optimality threshold at the root node relaxation in Table \ref{table:branch results}. We run the branch-and-cut algorithm with a budget of 15 and 30 minutes. The average results are also presented in Table \ref{table:branch results average}. %We have not presented the results for larger instances since it is not possible to process a reasonable number of nodes in the alloted time.

%SEP(M) 1.16	1.12
%SEP(S)  0.82	0.75
%SEP(S,M) 0.77 	0.72

Overall, the branching reduces the average optimality gaps from $1.49$, $0.97$ and $0.96$ at the root node to $1.21$, $0.78$  and $0.87$ after 15 minutes for the methods  $\mathsf{SEP(M)}$, $\mathsf{SEP(S)}$ and $\mathsf{SEP(M, S)}$, respectively. There are quite significant gap closure thanks to branching for 5pjm (typical OC instance), 30fsr (congested OC) and 118ieee (congested and small angle OC) instances. 
%The average computation times increase from  72.55, 152.20 and 254.71 to 528.82, 407.13 and 408.01 after branching. 
We should point out that it is possible to process more nodes with $\mathsf{SEP(M)}$ method, which helps to reduce the optimality gaps the most among our three methods, however, it is still the weakest relaxation approach. Also, we can see that $\mathsf{SEP(S)}$ method is able to provide lower  average optimality gap than  $\mathsf{SEP(M, S)}$ at the end since it processes more nodes. 

After 30 minutes, the percentage optimality gaps reduce further to 1.12, 0.71  and 0.82  for the methods  $\mathsf{SEP(M)}$, $\mathsf{SEP(S)}$ and $\mathsf{SEP(M, S)}$, respectively.

\begin{table}\scriptsize
\begin{center}
% Table generated by Excel2LaTeX from sheet 'SummaryFormattedPaper'
\begin{tabular}{|c|c|r|r|r|r|r|r|r|r|r|r|r|r|}
\hline
      &        &      \multicolumn{ 4}{|c|}{$\mathsf{SEP(M)}$} &                    \multicolumn{ 4}{|c|}{$\mathsf{SEP(S)}$} &                \multicolumn{ 4}{|c|}{$\mathsf{SEP(M, S)}$} \\
\hline
  \multicolumn{1}{|r|}{OC}        &   Case &      15 m &      30 m & time (s) &       node &      15 m &      30 m &  time (s)  &       node &      15 m &      30 m &  time (s)  &       node \\
\cline{1-14}
\multirow{12}{*}{\begin{sideways}TYPICAL (TYP)\end{sideways}} 
  &   3lmbd &       0.09 &       0.09 &       1.62 &          3 &       0.09 &       0.09 &       1.12 &          1 &       0.09 &       0.09 &       0.95 &          1 \\
\cline{2-14}
  &    5pjm &       0.10 &       0.10 &     124.46 &        270 &       0.10 &       0.10 &      80.23 &        136 &       0.10 &       0.10 &     108.39 &        129 \\
\cline{2-14}
 &    30fsr &       0.10 &       0.10 &      12.18 &          3 &            &            &            &            &            &            &            &            \\
\cline{2-14}
  & 118ieee &       0.26 &       0.24 &    1801.44 &        364 &       0.09 &       0.09 &     324.33 &         13 &       0.10 &       0.10 &     502.47 &         13 \\
\cline{2-14}
&   162ieee &       1.96 &       1.46 &    1811.55 &        139 &       1.56 &       1.33 &    1810.54 &         56 &       1.46 &       1.46 &    1837.44 &         21 \\
\cline{2-14}
 &  189edin &       0.14 &       0.10 &     660.44 &        179 &            &            &            &            &            &            &            &            \\
\cline{2-14}
 &  300ieee &       0.21 &       0.19 &    1807.97 &        287 &            &            &            &            &            &            &            &            \\
\cline{2-14}
&    2383wp &       1.33 &       1.28 &    1811.84 &         43 &       1.16 &       1.08 &    1826.53 &         27 &       0.92 &       0.92 &    1881.99 &         24 \\
\cline{2-14}
&    2736sp &       0.93 &       0.93 &    1810.29 &         50 &       0.87 &       0.87 &    1854.32 &         30 &       0.76 &       0.63 &    1869.73 &         23 \\
\cline{2-14}
 &  2737sop &       5.35 &       5.35 &    1820.06 &         44 &       1.15 &       1.15 &    1800.99 &         27 &       1.15 &       1.15 &    1858.50 &         19 \\
\cline{2-14}
 &  2746wop &       2.40 &       2.40 &    1828.43 &         46 &       2.57 &       2.18 &    1819.52 &         29 &       2.57 &       2.57 &    1819.18 &         21 \\
\cline{2-14}
 &   2746wp &       1.61 &       1.61 &    1828.00 &         38 &       1.04 &       1.04 &    1812.67 &         26 &       2.54 &       2.54 &    1852.72 &         22 \\
\hline
\hline

\multirow{15}{*}{\begin{sideways}CONGESTED (API)\end{sideways}} 
 &    3lmbd &       0.02 &       0.02 &       2.09 &          5 &       0.02 &       0.02 &       3.99 &          5 &       0.02 &       0.02 &       3.34 &          5 \\
\cline{2-14}
 &   14ieee &       0.09 &       0.09 &      11.86 &          5 &            &            &            &            &            &            &            &            \\
\cline{2-14}
 &     30as &       0.06 &       0.06 &      28.84 &          3 &            &            &            &            &            &            &            &            \\
\cline{2-14}
&     30fsr &       4.13 &       1.79 &    1801.24 &        603 &       0.73 &       0.35 &    1803.06 &        393 &       1.31 &       0.83 &    1802.18 &        220 \\
\cline{2-14}
 &   30ieee &       0.09 &       0.09 &      29.33 &          5 &            &            &            &            &            &            &            &            \\
\cline{2-14}
&    57ieee &       0.10 &       0.10 &     226.32 &         57 &            &            &            &            &            &            &            &            \\
\cline{2-14}
&   118ieee &      13.57 &      12.14 &    1803.17 &        115 &       7.83 &       6.17 &    1809.53 &         49 &       7.83 &       7.83 &    1834.74 &         23 \\
\cline{2-14}
&   162ieee &       1.17 &       1.16 &    1831.99 &         48 &       1.03 &       1.03 &    1821.67 &          6 &       1.03 &       1.03 &    2007.68 &          0 \\
\cline{2-14}
&   189edin* &       0.12 &       0.12 &     472.70 &         42 &       0.89 &       0.89 &     543.14 &          9 &       0.12 &       0.12 &     663.19 &          5 \\
\cline{2-14}
&   300ieee &       0.22 &       0.18 &    1810.43 &        100 &            &            &            &            &            &            &            &            \\
\cline{2-14}
&    2383wp &       0.98 &       0.98 &    1806.07 &         24 &       0.20 &       0.20 &    1801.00 &         20 &       0.40 &       0.40 &    1885.56 &         12 \\
\cline{2-14}
 &   2736sp &       1.57 &       1.57 &    1802.19 &         36 &       1.26 &       1.21 &    1825.24 &         18 &       1.32 &       1.26 &    1869.59 &         12 \\
\cline{2-14}
&   2737sop &       1.10 &       1.10 &    1808.90 &         32 &       0.65 &       0.65 &    1809.25 &         18 &       0.65 &       0.65 &    1835.00 &         10 \\
\cline{2-14}
&   2746wop &       0.51 &       0.51 &    1852.43 &         31 &       0.42 &       0.41 &    1852.74 &         25 &       0.42 &       0.36 &    1931.52 &         16 \\
\cline{2-14}
&    2746wp &       0.64 &       0.64 &    1847.24 &         27 &       0.20 &       0.20 &    1811.35 &         19 &       0.71 &       0.43 &    1814.47 &         12 \\
\hline
\hline
\multirow{13}{*}{\begin{sideways}SMALL ANGLE (SAD)\end{sideways}} 
&     3lmbd &       0.03 &       0.03 &       1.63 &          3 &       0.03 &       0.03 &       1.19 &          3 &       0.03 &       0.03 &       1.29 &          1 \\
\cline{2-14}
&    29edin &       0.85 &       0.84 &    1800.96 &        302 &       0.70 &       0.70 &    1819.15 &        102 &       0.69 &       0.67 &    1837.01 &         52 \\
\cline{2-14}
 &     30as &       0.08 &       0.08 &      14.71 &          5 &            &            &            &            &            &            &            &            \\
\cline{2-14}
&     30fsr &       0.08 &       0.08 &      18.55 &          9 &            &            &            &            &            &            &            &            \\
\cline{2-14}
&   118ieee &       4.02 &       3.98 &    1801.68 &        216 &       3.07 &       2.43 &    1811.48 &         71 &       3.35 &       3.07 &    1804.74 &         26 \\
\cline{2-14}
&   162ieee &       4.12 &       3.84 &    1813.33 &        118 &       3.76 &       3.76 &    1820.50 &         15 &            &            &            &            \\
\cline{2-14}
&   189edin* &       4.04 &       4.04 &     185.41 &         34 &       1.14 &       1.14 &    1800.41 &        139 &       2.70 &       1.06 &    1814.79 &         96 \\
\cline{2-14}
 &  300ieee &       0.21 &       0.16 &    1805.98 &        261 &       0.10 &       0.10 &     797.41 &          3 &            &            &            &            \\
\cline{2-14}
&    2383wp &       3.56 &       3.56 &    1813.15 &         66 &       3.03 &       2.89 &    1867.73 &         18 &       3.08 &       2.83 &    1805.81 &         12 \\
\cline{2-14}
&    2736sp &       2.77 &       2.77 &    1810.32 &         62 &       2.98 &       2.50 &    1827.66 &         31 &       3.81 &       3.81 &    1879.16 &         16 \\
\cline{2-14}
&   2737sop &       6.82 &       6.82 &    1808.89 &         71 &       3.57 &       3.57 &    1821.21 &         22 &       2.81 &       2.81 &    1805.93 &         15 \\
\cline{2-14}
&   2746wop &       6.36 &       5.61 &    1836.42 &         53 &       4.56 &       4.56 &    1899.28 &         26 &       5.41 &       5.41 &    1850.18 &         19 \\
\cline{2-14}
&    2746wp &       3.50 &       3.50 &    1806.90 &         53 &       2.74 &       2.74 &    1868.46 &         22 &       4.49 &       4.49 &    1860.37 &         15 \\
\hline

\end{tabular}  
\end{center}
\caption{Branch-and-cut results for the instances which are not solved at the root node up to the optimality threshold $\epsilon = 10^{-3}$. Percentage optimality gaps are reported after 15 and 30 minutes of branching (root node computation time is included).  OC: Operating Condition.}
\label{table:branch results}
\end{table}

\begin{table}\scriptsize
\begin{center}
% Table generated by Excel2LaTeX from sheet 'SummaryFormatted'
\begin{tabular}{|c|r|r|r|r|r|r|r|r|r|r|r|r|}
\hline
  &      \multicolumn{ 4}{|c|}{$\mathsf{SEP(M)}$} &                    \multicolumn{ 4}{|c|}{$\mathsf{SEP(S)}$} &                \multicolumn{ 4}{|c|}{$\mathsf{SEP(M, S)}$} \\
\hline
          OC &      15 m &      30 m & time (s) &       node &      15 m &      30 m &  time (s)  &       node &      15 m &      30 m &  time (s)  &       node \\
\hline
       TYP &       0.71 &       0.68 &     730.44 &      70.24 &       0.43 &       0.40 &     562.17 &      17.00 &       0.48 &       0.48 &     589.65 &      13.57 \\
\hline
       API &       1.18 &       0.99 &     819.67 &      54.24 &       0.65 &       0.55 &     766.53 &      27.29 &       0.68 &       0.64 &     815.43 &      15.52 \\
\hline
       SAD &       1.75 &       1.70 &     787.31 &      60.05 &       1.24 &       1.18 &     829.07 &      22.00 &       1.46 &       1.36 &     852.02 &      12.57 \\
\hline
       ALL &       1.21 &       1.12 &     779.14 &      61.51 &       0.78 &       0.71 &     719.26 &      22.10 &       0.87 &       0.82 &     752.37 &      13.89 \\
\hline
\end{tabular}  
\end{center}
\caption{Averages of branch-and-cut results for all the instances. Percentage optimality gaps are reported after 15 and 30 minutes of branching (root node computation time is included).  }
\label{table:branch results average}
\end{table}

%% \cline{2-11}

%SEP(M) 1.16	1.12
%SEP(S)  0.82	0.75
%SEP(S,M) 0.77 	0.72

%
%\begin{figure*}
%\centering
%\begin{tikzpicture} 
%\begin{axis}[nodes near coords,enlargelimits=0.2, xlabel={Average \% Gap}, ylabel={Average Time (s)}]
%	\addplot+[only marks, point meta=explicit symbolic] 
%	coordinates { 
%			(1.23,	406.10) [McC] 
%			(0.77,	313.88) [SDP] 
%			(0.77,	314.85) [McC+SDP] 
%			(3.52, 257.67) [MOSEK] 
%			(2.87, 307.31) [SDPT3] 
%	}; 
%\end{axis} 
%\end{tikzpicture}
%\caption{Branch-and-cut results.}\label{figure:branch results}
%\end{figure*}
%

%
%
%.
%
%.
%
%. 
%
%
%\subsection{Discussion}
%
%\quad \  ???
%
%compare three methods
%
%???
%
%trade-off between root node and branching
%
%???

A final comparison of different methods is presented in Figure \ref{figure:CDF}, which can be interpreted as a \textit{cumulative distribution function}. In this figure, we record the fraction of instances solved up to a given percentage optimality gap. Therefore, a method whose corresponding curve is below the others is dominated. By construction, the plain SOCP relaxation approach is dominated since it is the weakest relaxation considered. Strong SOCP relaxations from our previous paper  \cite{kocuk2015} improves the plain SOCP relaxation considerably but it is not very competitive against the SDP relaxation, especially for easier instances. However, the proposed $\mathsf{SEP(S)}$ approach at the root node and after branching is  more successful than the SDP relaxation, especially for the more difficult instances.

\begin{figure}[H]
\centering
\begin{tikzpicture} 
\begin{axis}[ height=10cm, width=10cm, legend pos=south east, xlabel=\% Optimality Gap, ylabel=Fraction of instances, 
xmin=0, xmax=10, ymin=0, ymax=1, extra y ticks={0.1,0.3,0.5,0.7,0.9}, extra x ticks={1,3,5,7,9} ]  

\addlegendentry{{\small $\mathsf{SEP(S)}$-30 min}} \addplot [color=blue,mark= ]
coordinates 
{ 
(0.1,0.619047619047619)
(0.2,0.634920634920635)
(0.3,0.650793650793651)
(0.4,0.666666666666667)
(0.5,0.682539682539683)
(0.6,0.682539682539683)
(0.7,0.698412698412698)
(0.8,0.714285714285714)
(0.9,0.746031746031746)
(1,0.746031746031746)
(1.1,0.793650793650794)
(1.2,0.825396825396825)
(1.3,0.841269841269841)
(1.4,0.857142857142857)
(1.5,0.857142857142857)
(1.6,0.857142857142857)
(1.7,0.857142857142857)
(1.8,0.857142857142857)
(1.9,0.857142857142857)
(2,0.857142857142857)
(2.1,0.857142857142857)
(2.2,0.873015873015873)
(2.3,0.873015873015873)
(2.4,0.873015873015873)
(2.5,0.904761904761905)
(2.6,0.904761904761905)
(2.7,0.904761904761905)
(2.8,0.920634920634921)
(2.9,0.936507936507937)
(3,0.936507936507937)
(3.1,0.936507936507937)
(3.2,0.936507936507937)
(3.3,0.936507936507937)
(3.4,0.936507936507937)
(3.5,0.936507936507937)
(3.6,0.952380952380952)
(3.7,0.952380952380952)
(3.8,0.968253968253968)
(3.9,0.968253968253968)
(4,0.968253968253968)
(4.1,0.968253968253968)
(4.2,0.968253968253968)
(4.3,0.968253968253968)
(4.4,0.968253968253968)
(4.5,0.968253968253968)
(4.6,0.984126984126984)
(4.7,0.984126984126984)
(4.8,0.984126984126984)
(4.9,0.984126984126984)
(5,0.984126984126984)
(5.1,0.984126984126984)
(5.2,0.984126984126984)
(5.3,0.984126984126984)
(5.4,0.984126984126984)
(5.5,0.984126984126984)
(5.6,0.984126984126984)
(5.7,0.984126984126984)
(5.8,0.984126984126984)
(5.9,0.984126984126984)
(6,0.984126984126984)
(6.1,0.984126984126984)
(6.2,1)
(6.3,1)
(6.4,1)
(6.5,1)
(6.6,1)
(6.7,1)
(6.8,1)
(6.9,1)
(7,1)
(7.1,1)
(7.2,1)
(7.3,1)
(7.4,1)
(7.5,1)
(7.6,1)
(7.7,1)
(7.8,1)
(7.9,1)
(8,1)
(8.1,1)
(8.2,1)
(8.3,1)
(8.4,1)
(8.5,1)
(8.6,1)
(8.7,1)
(8.8,1)
(8.9,1)
(9,1)
(9.1,1)
(9.2,1)
(9.3,1)
(9.4,1)
(9.5,1)
(9.6,1)
(9.7,1)
(9.8,1)
(9.9,1)
(10,1)
(10.1,1)
 };

\addlegendentry{{\small $\mathsf{SEP(S)}$-Root}} \addplot [color=cyan,mark= ]
coordinates 
{ 
(0.1,0.53968253968254)
(0.2,0.571428571428571)
(0.3,0.603174603174603)
(0.4,0.619047619047619)
(0.5,0.634920634920635)
(0.6,0.634920634920635)
(0.7,0.650793650793651)
(0.8,0.682539682539683)
(0.9,0.698412698412698)
(1,0.698412698412698)
(1.1,0.761904761904762)
(1.2,0.777777777777778)
(1.3,0.777777777777778)
(1.4,0.80952380952381)
(1.5,0.80952380952381)
(1.6,0.825396825396825)
(1.7,0.825396825396825)
(1.8,0.825396825396825)
(1.9,0.825396825396825)
(2,0.825396825396825)
(2.1,0.825396825396825)
(2.2,0.825396825396825)
(2.3,0.825396825396825)
(2.4,0.825396825396825)
(2.5,0.825396825396825)
(2.6,0.841269841269841)
(2.7,0.841269841269841)
(2.8,0.841269841269841)
(2.9,0.841269841269841)
(3,0.841269841269841)
(3.1,0.873015873015873)
(3.2,0.873015873015873)
(3.3,0.873015873015873)
(3.4,0.888888888888889)
(3.5,0.888888888888889)
(3.6,0.888888888888889)
(3.7,0.904761904761905)
(3.8,0.920634920634921)
(3.9,0.920634920634921)
(4,0.936507936507937)
(4.1,0.936507936507937)
(4.2,0.936507936507937)
(4.3,0.952380952380952)
(4.4,0.952380952380952)
(4.5,0.952380952380952)
(4.6,0.968253968253968)
(4.7,0.968253968253968)
(4.8,0.968253968253968)
(4.9,0.968253968253968)
(5,0.968253968253968)
(5.1,0.968253968253968)
(5.2,0.984126984126984)
(5.3,0.984126984126984)
(5.4,0.984126984126984)
(5.5,0.984126984126984)
(5.6,0.984126984126984)
(5.7,0.984126984126984)
(5.8,0.984126984126984)
(5.9,0.984126984126984)
(6,0.984126984126984)
(6.1,0.984126984126984)
(6.2,0.984126984126984)
(6.3,0.984126984126984)
(6.4,0.984126984126984)
(6.5,0.984126984126984)
(6.6,0.984126984126984)
(6.7,0.984126984126984)
(6.8,0.984126984126984)
(6.9,0.984126984126984)
(7,0.984126984126984)
(7.1,0.984126984126984)
(7.2,0.984126984126984)
(7.3,0.984126984126984)
(7.4,0.984126984126984)
(7.5,0.984126984126984)
(7.6,0.984126984126984)
(7.7,0.984126984126984)
(7.8,0.984126984126984)
(7.9,0.984126984126984)
(8,1)
(8.1,1)
(8.2,1)
(8.3,1)
(8.4,1)
(8.5,1)
(8.6,1)
(8.7,1)
(8.8,1)
(8.9,1)
(9,1)
(9.1,1)
(9.2,1)
(9.3,1)
(9.4,1)
(9.5,1)
(9.6,1)
(9.7,1)
(9.8,1)
(9.9,1)
(10,1)
(10.1,1)
 };

\addlegendentry{{\small SDP}} \addplot [color=red,mark= ]
coordinates 
{ 
(0.1,0.634920634920635)
(0.2,0.666666666666667)
(0.3,0.666666666666667)
(0.4,0.698412698412698)
(0.5,0.761904761904762)
(0.6,0.761904761904762)
(0.7,0.777777777777778)
(0.8,0.777777777777778)
(0.9,0.777777777777778)
(1,0.777777777777778)
(1.1,0.80952380952381)
(1.2,0.825396825396825)
(1.3,0.888888888888889)
(1.4,0.888888888888889)
(1.5,0.888888888888889)
(1.6,0.888888888888889)
(1.7,0.888888888888889)
(1.8,0.888888888888889)
(1.9,0.888888888888889)
(2,0.888888888888889)
(2.1,0.904761904761905)
(2.2,0.904761904761905)
(2.3,0.904761904761905)
(2.4,0.904761904761905)
(2.5,0.904761904761905)
(2.6,0.904761904761905)
(2.7,0.904761904761905)
(2.8,0.904761904761905)
(2.9,0.904761904761905)
(3,0.904761904761905)
(3.1,0.904761904761905)
(3.2,0.904761904761905)
(3.3,0.904761904761905)
(3.4,0.904761904761905)
(3.5,0.904761904761905)
(3.6,0.920634920634921)
(3.7,0.920634920634921)
(3.8,0.920634920634921)
(3.9,0.920634920634921)
(4,0.920634920634921)
(4.1,0.920634920634921)
(4.2,0.920634920634921)
(4.3,0.920634920634921)
(4.4,0.920634920634921)
(4.5,0.920634920634921)
(4.6,0.920634920634921)
(4.7,0.920634920634921)
(4.8,0.920634920634921)
(4.9,0.920634920634921)
(5,0.920634920634921)
(5.1,0.920634920634921)
(5.2,0.920634920634921)
(5.3,0.936507936507937)
(5.4,0.936507936507937)
(5.5,0.936507936507937)
(5.6,0.936507936507937)
(5.7,0.936507936507937)
(5.8,0.936507936507937)
(5.9,0.936507936507937)
(6,0.936507936507937)
(6.1,0.936507936507937)
(6.2,0.936507936507937)
(6.3,0.936507936507937)
(6.4,0.936507936507937)
(6.5,0.936507936507937)
(6.6,0.936507936507937)
(6.7,0.936507936507937)
(6.8,0.936507936507937)
(6.9,0.936507936507937)
(7,0.936507936507937)
(7.1,0.936507936507937)
(7.2,0.936507936507937)
(7.3,0.936507936507937)
(7.4,0.936507936507937)
(7.5,0.936507936507937)
(7.6,0.952380952380952)
(7.7,0.952380952380952)
(7.8,0.952380952380952)
(7.9,0.952380952380952)
(8,0.952380952380952)
(8.1,0.952380952380952)
(8.2,0.952380952380952)
(8.3,0.952380952380952)
(8.4,0.952380952380952)
(8.5,0.952380952380952)
(8.6,0.952380952380952)
(8.7,0.952380952380952)
(8.8,0.952380952380952)
(8.9,0.952380952380952)
(9,0.952380952380952)
(9.1,0.952380952380952)
(9.2,0.952380952380952)
(9.3,0.952380952380952)
(9.4,0.952380952380952)
(9.5,0.952380952380952)
(9.6,0.952380952380952)
(9.7,0.952380952380952)
(9.8,0.952380952380952)
(9.9,0.952380952380952)
(10,0.952380952380952)
(10.1,0.952380952380952)
 };

\addlegendentry{{\small Best of \cite{kocuk2015}}} \addplot [color=green,mark= ]
coordinates 
{ 
(0.1,0.428571428571429)
(0.2,0.46031746031746)
(0.3,0.492063492063492)
(0.4,0.53968253968254)
(0.5,0.571428571428571)
(0.6,0.571428571428571)
(0.7,0.571428571428571)
(0.8,0.603174603174603)
(0.9,0.603174603174603)
(1,0.603174603174603)
(1.1,0.603174603174603)
(1.2,0.603174603174603)
(1.3,0.619047619047619)
(1.4,0.634920634920635)
(1.5,0.666666666666667)
(1.6,0.73015873015873)
(1.7,0.73015873015873)
(1.8,0.746031746031746)
(1.9,0.746031746031746)
(2,0.746031746031746)
(2.1,0.777777777777778)
(2.2,0.777777777777778)
(2.3,0.777777777777778)
(2.4,0.793650793650794)
(2.5,0.80952380952381)
(2.6,0.80952380952381)
(2.7,0.80952380952381)
(2.8,0.825396825396825)
(2.9,0.825396825396825)
(3,0.825396825396825)
(3.1,0.825396825396825)
(3.2,0.825396825396825)
(3.3,0.825396825396825)
(3.4,0.825396825396825)
(3.5,0.825396825396825)
(3.6,0.857142857142857)
(3.7,0.873015873015873)
(3.8,0.873015873015873)
(3.9,0.888888888888889)
(4,0.904761904761905)
(4.1,0.904761904761905)
(4.2,0.904761904761905)
(4.3,0.904761904761905)
(4.4,0.904761904761905)
(4.5,0.904761904761905)
(4.6,0.904761904761905)
(4.7,0.904761904761905)
(4.8,0.904761904761905)
(4.9,0.904761904761905)
(5,0.904761904761905)
(5.1,0.904761904761905)
(5.2,0.904761904761905)
(5.3,0.904761904761905)
(5.4,0.904761904761905)
(5.5,0.904761904761905)
(5.6,0.904761904761905)
(5.7,0.904761904761905)
(5.8,0.904761904761905)
(5.9,0.920634920634921)
(6,0.920634920634921)
(6.1,0.920634920634921)
(6.2,0.920634920634921)
(6.3,0.936507936507937)
(6.4,0.936507936507937)
(6.5,0.936507936507937)
(6.6,0.936507936507937)
(6.7,0.936507936507937)
(6.8,0.936507936507937)
(6.9,0.936507936507937)
(7,0.936507936507937)
(7.1,0.936507936507937)
(7.2,0.936507936507937)
(7.3,0.936507936507937)
(7.4,0.936507936507937)
(7.5,0.952380952380952)
(7.6,0.952380952380952)
(7.7,0.952380952380952)
(7.8,0.952380952380952)
(7.9,0.952380952380952)
(8,0.952380952380952)
(8.1,0.952380952380952)
(8.2,0.952380952380952)
(8.3,0.952380952380952)
(8.4,0.952380952380952)
(8.5,0.952380952380952)
(8.6,0.952380952380952)
(8.7,0.952380952380952)
(8.8,0.952380952380952)
(8.9,0.952380952380952)
(9,0.952380952380952)
(9.1,0.952380952380952)
(9.2,0.952380952380952)
(9.3,0.952380952380952)
(9.4,0.952380952380952)
(9.5,0.952380952380952)
(9.6,0.952380952380952)
(9.7,0.952380952380952)
(9.8,0.952380952380952)
(9.9,0.952380952380952)
(10,0.952380952380952)
(10.1,0.952380952380952)
 };

\addlegendentry{{\small SOCP}} \addplot [color=magenta,mark= ]
coordinates 
{ (0.1,0.111111111111111)
(0.2,0.174603174603175)
(0.3,0.206349206349206)
(0.4,0.222222222222222)
(0.5,0.253968253968254)
(0.6,0.285714285714286)
(0.7,0.333333333333333)
(0.8,0.333333333333333)
(0.9,0.380952380952381)
(1,0.396825396825397)
(1.1,0.412698412698413)
(1.2,0.428571428571429)
(1.3,0.444444444444444)
(1.4,0.476190476190476)
(1.5,0.476190476190476)
(1.6,0.523809523809524)
(1.7,0.53968253968254)
(1.8,0.53968253968254)
(1.9,0.53968253968254)
(2,0.53968253968254)
(2.1,0.555555555555556)
(2.2,0.571428571428571)
(2.3,0.571428571428571)
(2.4,0.587301587301587)
(2.5,0.603174603174603)
(2.6,0.603174603174603)
(2.7,0.603174603174603)
(2.8,0.603174603174603)
(2.9,0.603174603174603)
(3,0.619047619047619)
(3.1,0.619047619047619)
(3.2,0.619047619047619)
(3.3,0.619047619047619)
(3.4,0.634920634920635)
(3.5,0.650793650793651)
(3.6,0.650793650793651)
(3.7,0.682539682539683)
(3.8,0.698412698412698)
(3.9,0.698412698412698)
(4,0.698412698412698)
(4.1,0.698412698412698)
(4.2,0.714285714285714)
(4.3,0.73015873015873)
(4.4,0.746031746031746)
(4.5,0.746031746031746)
(4.6,0.746031746031746)
(4.7,0.746031746031746)
(4.8,0.761904761904762)
(4.9,0.777777777777778)
(5,0.777777777777778)
(5.1,0.777777777777778)
(5.2,0.777777777777778)
(5.3,0.777777777777778)
(5.4,0.777777777777778)
(5.5,0.793650793650794)
(5.6,0.793650793650794)
(5.7,0.793650793650794)
(5.8,0.793650793650794)
(5.9,0.825396825396825)
(6,0.825396825396825)
(6.1,0.825396825396825)
(6.2,0.825396825396825)
(6.3,0.825396825396825)
(6.4,0.825396825396825)
(6.5,0.825396825396825)
(6.6,0.841269841269841)
(6.7,0.841269841269841)
(6.8,0.841269841269841)
(6.9,0.841269841269841)
(7,0.841269841269841)
(7.1,0.857142857142857)
(7.2,0.857142857142857)
(7.3,0.857142857142857)
(7.4,0.857142857142857)
(7.5,0.857142857142857)
(7.6,0.857142857142857)
(7.7,0.857142857142857)
(7.8,0.857142857142857)
(7.9,0.857142857142857)
(8,0.857142857142857)
(8.1,0.857142857142857)
(8.2,0.857142857142857)
(8.3,0.857142857142857)
(8.4,0.857142857142857)
(8.5,0.857142857142857)
(8.6,0.857142857142857)
(8.7,0.857142857142857)
(8.8,0.857142857142857)
(8.9,0.857142857142857)
(9,0.857142857142857)
(9.1,0.857142857142857)
(9.2,0.873015873015873)
(9.3,0.873015873015873)
(9.4,0.873015873015873)
(9.5,0.873015873015873)
(9.6,0.873015873015873)
(9.7,0.873015873015873)
(9.8,0.873015873015873)
(9.9,0.873015873015873)
(10,0.873015873015873)
(10.1,0.873015873015873)
 };

\end{axis} \end{tikzpicture}
\caption{Fraction of instances solved up to a given percentage optimality gap for different methods.}\label{figure:CDF}
\end{figure}
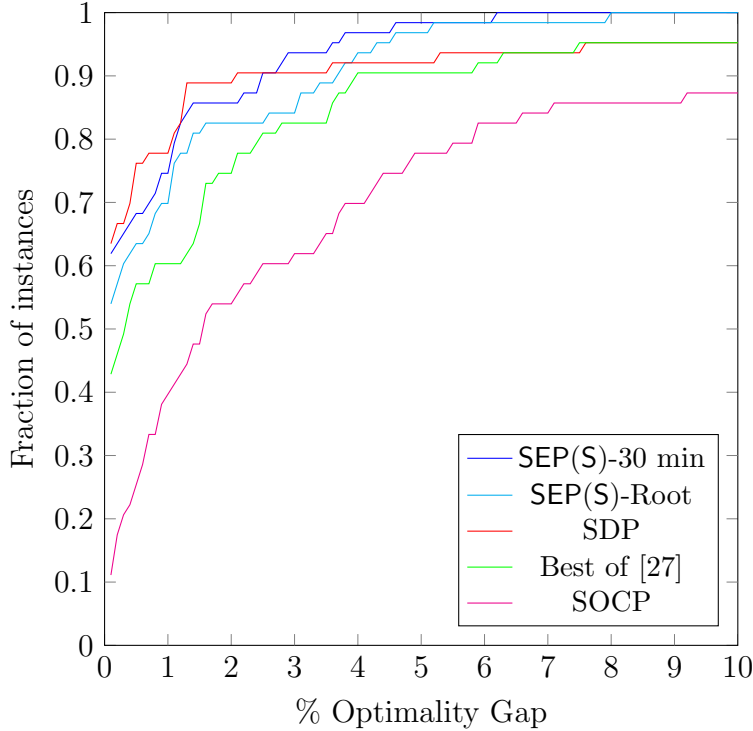

\section{Conclusion}
\label{sec: concl}

In this paper, we proposed new approaches for obtaining globally optimal solutions of the AC OPF Problem. We first reformulated the AC OPF problem as a minor constrained problem, and then proposed several convexification techniques for nonconvexities involving minor constraints using only second-order conic and linear relaxations.  We improved the resulting SOCP relaxation via cutting planes and convex envelopes by incorporating bound tightening techniques.  We proposed three methods with respect to the cutting plane procedure. Our methods are successful in proving global optimal solutions for many challenging OPF instances from the NESTA archive. Compared to the standard SDP relaxation, our approaches provide about 2 times smaller optimality gaps with only half of the average computation time. For the instances not solved, we propose to use a branch-and-cut scheme where the proposed SOCP relaxation serves as the root node relaxation. The strongest of our SOCP based branch-and-cut algorithms proves 0.71\% optimality gap in 720 seconds on the average for the NESTA library.

As a future work, we would like to apply the methodology developed in this chapter to multi-period AC OPF and Optimal Transmission Switching Problems. Another possible line of research is to implement an LP based outer-approximation to SOCP based branch-and-cut method. This would lead to weaker relaxations than a pure SOCP based approach but can incorporate warm-start and it may be possible to process significantly more nodes in the same amount of time.

\appendix
\section{KKT Points for Arctangent Envelopes}
\label{app:arctan}
Let us rewrite the optimization problem in \eqref{error problem with angle} as
\begin{equation}\label{error prob generic}
\Delta \gamma = \max \{  f(c,s) :   (c, s) \in F \},
\end{equation}
where 
$$
 f(c,s):= \arctan \left( \frac{s}{c} \right) - (\gamma + \alpha c + \beta s )  
$$
and
$$
 F := \{(c,s) :   (c, s) \in [\underline c, \overline c] \times [\underline s, \overline s],  \ c \tan \underline \theta \le s \le \tan \overline \theta \}.
$$
Without loss of generality, let us assume that the relations
$
\arctan(\underline s/ \underline c) \le \underline \theta
$
and
$
\overline \theta \le \arctan(\overline s/ \underline c)
$
hold between the variable bounds (otherwise, at least one of the bounds can be improved). 
Let us denote the optimal solution to problem \eqref{error prob generic} as $(c^*, s^*)$. First, we claim that $(c^*, s^*)$ is not in the interior of $F$. This is due to the fact that  the Hessian of $f$ at a point $(c,s)$, which is  given as
$$
\frac{1}{(c^2+s^2)^2}
\begin{bmatrix}
2cs & s^2-c^2\\ s^2-c^2 & -2cs
\end{bmatrix},
$$
is an indefinite matrix. Therefore, an interior point of $F$ will not satisfy the second-order necessary conditions of local optimality. 

The above argument implies that  $(c^*, s^*)$ belongs to the boundary of $F$, which is the union of the following six line segments:
\begin{enumerate}
\item $[(\underline c, \underline c \tan \underline \theta), (\underline c, \underline c \tan \overline \theta)]$
\item $[(\underline c, \underline c \tan \overline \theta), (\overline s/\tan \overline\theta, \overline s)]$ 
\item $[ (\overline s/\tan \overline\theta, \overline s), (\overline c, \overline s)]$ 
\item $[  (\overline c, \overline s), (\overline c, \underline s)]$ 
\item $[  (\overline c, \underline s),  (\underline s/\tan \underline\theta, \underline s)]$ 
\item $[ (\underline s/\tan \underline\theta, \underline s), (\underline c, \underline c \tan \underline \theta)]$
\end{enumerate}
Note that the line segments (ii) and (vi) cannot contain $(c^*, s^*)$ in their relative interior since the function $f$ is linear along them. Hence, the problem reduces to four 1-dimensional optimization problems, which can be solved easily. The global optimal solution $(c^*, s^*)$ is the one that gives the largest objective value among the KKT points calculated by solving those four 1-dimensional optimization problems.

\section{Algorithms for Extreme Point Calculations for Edge Cuts}
\label{app:extrpoint}

\begin{algorithm}[H]
\caption{Find extreme points of conv$(\mathcal{K}_{ij}^\ge)$ when $c_{jj}=\hat c_{jj}$, $c_{ij} = \hat c_{ij}$ and $s_{ij} = \hat s_{ij}$ are fixed to one of their bounds.}
\label{alg:extr cii}
\begin{algorithmic}
\STATE Compute $\phi = \frac{\hat c_{ij}^2 + \hat s_{ij}^2}{\hat c_{jj}}$ .
\STATE Let $E$ be the set of $c_{ii}$ coordinates of the extreme points.
\STATE $E= \begin{cases} 
\emptyset & \text{if } \phi < \underline c_{ii}\\ 
\{\phi, \overline c_{ii} \} & \text{if } \underline c_{ii} \le \phi \le \overline c_{ii} \\
\{\underline c_{ii}, \overline c_{ii} \} & \text{if }  \phi > \overline c_{ii} 
\end{cases}$
\end{algorithmic}
\end{algorithm}

\begin{algorithm}[H]
\caption{Find extreme points of conv$(\mathcal{K}_{ij}^\ge)$ when $c_{ii}=\hat c_{ii}$,  $c_{jj}=\hat c_{jj}$ and $s_{ij} = \hat s_{ij}$ are fixed to one of their bounds.}
\label{alg:extr cij}
\begin{algorithmic}
\STATE Compute $\phi =\hat c_{ii} \hat c_{jj} - \hat s_{ij}^2$.
\STATE Let $E$ be the set of  $c_{ij}$ coordinates of the extreme points.
\STATE $E= \begin{cases} 
\{\underline c_{ij}, \overline c_{ij} \}  & \text{if } \phi < 0   \\ 
\{\underline c_{ij}, \overline c_{ij} \}  & \text{if } \phi \ge 0  \text{ and } \sqrt{\phi} < \underline c_{ij}  \\ 
\{\sqrt{\phi}, \overline c_{ij} \}  & \text{if } \phi \ge 0  \text{ and }  \underline c_{ij} \le \sqrt{\phi} \le \overline c_{ij}  \\ 
\emptyset &   \text{if } \phi \ge 0  \text{ and }   \sqrt{\phi} >  \overline c_{ij} 
\end{cases}$
\end{algorithmic}
\end{algorithm}

\begin{algorithm}[H]
\caption{Find extreme points of conv$(\mathcal{K}_{ij}^\ge)$ when $c_{ii}=\hat c_{ii}$,  $c_{jj}=\hat c_{jj}$ and $c_{ij} = \hat c_{ij}$ are fixed to one of their bounds.}
\label{alg:extr sij}
\begin{algorithmic}
\STATE Compute $\phi =\hat c_{ii} \hat c_{jj} - \hat c_{ij}^2$.
\STATE Let $E$ be the set of  $s_{ij}$ coordinates of the extreme points.
\STATE $E= \begin{cases} 
\{\underline s_{ij}, \overline s_{ij} \}  & \text{if } \phi < 0   \\ 
\{\underline s_{ij}, \overline s_{ij} \}  & \text{if } \phi \ge 0  \text{ and } \overline s_{ij} < -\sqrt{\phi}   \\ 
\{ \underline s_{ij}, -\sqrt{\phi}\}  & \text{if } \phi \ge 0  \text{ and }  \underline s_{ij} \le -\sqrt{\phi} \le \overline s_{ij}  \\ 
\{\underline s_{ij}, -\sqrt{\phi}, \sqrt{\phi}, \overline s_{ij} \} &   \text{if } \phi \ge 0  \text{ and }  \underline s_{ij} \le -  \sqrt{\phi}, \sqrt{\phi} \ge \overline s_{ij} \\ 
\emptyset &   \text{if } \phi \ge 0  \text{ and } -\sqrt{\phi} \le \underline s_{ij}   , \overline s_{ij} \le \sqrt{\phi}  \\
\{ \sqrt{\phi}, \overline s_{ij}\} &   \text{if } \phi \ge 0  \text{ and } -\sqrt{\phi} \le \underline s_{ij}  \le \sqrt{\phi}  \le \overline s_{ij} \\
\{\underline s_{ij}, \overline s_{ij} \}  & \text{if } \phi \ge 0  \text{ and }    \underline s_{ij} > \sqrt{\phi}
\end{cases}$
\end{algorithmic}
\end{algorithm}

\section{Proof of Theorem \ref{main theorem 1}}
\label{app:main theorem 1 proof}

Our proof approach is based on identifying the extreme points of $\mathcal{S}_a$.
Let us start with a proposition.

\begin{prop} \label{either or 1}
Let $(x,y)$ be an extreme point of the set $\mathcal{S}_a$.  Then, for a distinct pair of indices  $i$ and $j$ 
\begin{enumerate}
\item either $x_i$ or $y_j$ is at one of its bounds.
\item either $x_i$ or $y_i$ is at one of its bounds.
\item either $x_j$ or $y_j$ is at one of its bounds.
\item either $x_j$ or $y_i$ is at one of its bounds.
\end{enumerate}
\end{prop}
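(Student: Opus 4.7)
The plan is to prove the proposition by contradiction via a perturbation argument. I interpret the four conditions existentially: there exist distinct $i,j$ such that all four disjunctions hold. A sufficient condition is that both $i$ and $j$ belong to $F:=\{k:x_k\text{ and }y_k\text{ are both at a bound of their box}\}$, since then every disjunction is trivially true. So it suffices to show $|F|\ge 2$ at every extreme point (assuming $N\ge 3$).

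To set up the perturbation, write $f(u,w):=\sum_k a_k u_k w_k$ and expand
\[
f(x+\epsilon v_x,\,y+\epsilon v_y)=f(x,y)+\epsilon L(v)+\epsilon^2 Q(v),
\]
where $L(v)=\sum_k a_k\bigl(y_k(v_x)_k+x_k(v_y)_k\bigr)$ and $Q(v)=\sum_k a_k(v_x)_k(v_y)_k$. A nonzero $v$ yields $(x,y)\pm\epsilon v\in\mathcal{S}_a$ for all small $\epsilon>0$ (contradicting extremality) precisely when $L(v)=0$, $Q(v)=0$, and the support of $v_x$ (respectively $v_y$) lies among the indices where $x_k$ (respectively $y_k$) is strictly interior.

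Assume for contradiction that $|F|\le 1$, so $A:=[N]\setminus F$ satisfies $|A|\ge N-1\ge 2$, where each $k\in A$ has at least one of $x_k$ or $y_k$ strictly interior. A short case analysis yields two exhaustive situations. Case (A): there exist distinct $i,j\in A$ with $x_i$ interior and $y_j$ interior. Choose $v$ with only $(v_x)_i$ and $(v_y)_j$ nonzero; then $Q(v)=0$ automatically because no index carries both kinds of nonzero entries, and $L(v)=a_iy_i(v_x)_i+a_jx_j(v_y)_j=0$ is a single linear equation in two unknowns, hence nontrivially solvable. Case (B): a single type dominates, so WLOG only $x_k$ is strictly interior for every $k\in A$, while all $y_k$ lie at bounds (the mirror case is symmetric). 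Set $v_y=0$ and let $v_x$ be supported on $A$; again $Q(v)=0$, and $L(v)=\sum_{k\in A}a_ky_k(v_x)_k=0$ is a single linear constraint on $|A|\ge 2$ free scalars, so it admits a nontrivial solution.

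The main technical obstacle is the quadratic term $Q(v)$, which does not vanish on generic tangent directions of $\{f=0\}$; both cases are engineered precisely so that no coordinate $k$ has both $(v_x)_k$ and $(v_y)_k$ nonzero, forcing $Q(v)\equiv 0$. To verify that Cases (A) and (B) are exhaustive, one checks that if some $k^*\in A$ has $x_{k^*}$ interior and some $k^{**}\in A$ has $y_{k^{**}}$ interior with $k^*\ne k^{**}$, we are already in Case (A); while $k^*=k^{**}$ would force $A=\{k^*\}$, contradicting $|A|\ge 2$. In either case, the resulting nonzero admissible $v$ gives $(x,y)\pm\epsilon v\in\mathcal{S}_a$ for small $\epsilon>0$, contradicting extremality and thereby establishing $|F|\ge 2$.
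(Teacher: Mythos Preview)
Your interpretation of the statement is the central issue. You read ``for a distinct pair of indices $i$ and $j$'' existentially, but the paper intends it \emph{universally}: the four disjunctions hold for \emph{every} distinct pair $i,j$. This is what the subsequent corollary needs---combining the four conditions for an arbitrary pair yields ``$i\in F$ or $j\in F$'', and only the universal version of this forces $|A|\le 1$ (at most one index can fail to be in $F$). Your stated conclusion $|F|\ge 2$ is strictly weaker and would not suffice downstream. The paper's proof fixes an arbitrary pair $(i,j)$, assumes both $x_i$ and $y_j$ are strictly interior, and builds an explicit two-sided linear perturbation in those two coordinates alone (handling the degenerate subcases $y_i=0$ or $x_j=0$ separately); the other three items follow by symmetric arguments.

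That said, your actual argument proves more than you claim. The only hypothesis you use to derive the contradiction is $|A|\ge 2$; the detour through ``$|F|\le 1$ and $N\ge 3$'' is unnecessary. If you simply assume $|A|\ge 2$ and run your Cases (A)/(B), you conclude $|A|\le 1$, which is exactly the universal proposition (after the Boolean simplification the paper does in its corollary). So your perturbation machinery is correct and, once the quantifier is fixed, your approach is a legitimate alternative: rather than proving four separate disjunctions for a fixed pair, you argue globally that two ``free'' indices always admit a nontrivial linear direction $v$ with $L(v)=0$ and $Q(v)=0$. The paper's route is more elementary per step but requires repeating the argument four times; yours reaches the final structural conclusion ($|A|\le 1$) in one pass, at the cost of the case split (A)/(B) to kill the quadratic term. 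Drop the $N\ge 3$ assumption, restate the contradiction hypothesis as $|A|\ge 2$, and conclude the universal statement.
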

\begin{proof}
We only prove the first statement. The others can be proven using exactly the same reasoning.

Assume for a contradiction that $\underline x_i < x_i < \overline x_i$ and $\underline y_j < y_j < \overline y_j$. Consider the following cases:
\begin{enumerate}[{Case} 1:]
\item $y_i \neq 0$ and $x_j \neq0$
\begin{enumerate}[{Case 1}a:]
\item $\frac{a_i y_i}{a_jx_j} > 0$

Let $\epsilon = \{x_i - \underline x_i, \overline x_i - x_i, \frac{a_jx_j}{a_i y_i} (y_i - \underline y_i), \frac{a_jx_j}{a_i y_i} (\overline y_i -  y_i) \} $ and $\delta = \frac{a_i y_i}{a_jx_j} \epsilon$. Note that both $\epsilon$ and $\delta$ are positive.
Now, construct $(x^+, y^-) = (x+\epsilon e_i, y-\delta e_j)$ and $(x^-, y^+) = (x-\epsilon e_i, y+\delta e_j)$ where $e_i$ is the $i$-th unit vector. Observe that both $(x^+, y^-)$ and $(x^-, y^+)$ belong to $\mathcal{S}_a$. Moreover, $(x,y) = \frac12 (x^+, y^-) + \frac12 (x^-, y^+)$. But, this is a contradiction to $(x,y)$ being an extreme point of~$\mathcal{S}_a$.

\item $\frac{a_i y_i}{a_jx_j} < 0$

Let $\epsilon = \{x_i - \underline x_i, \overline x_i - x_i, \frac{a_jx_j}{a_i y_i} (\underline y_i -  y_i), \frac{a_jx_j}{a_i y_i} ( y_i -\overline  y_i) \} $ and $\delta = -\frac{a_i y_i}{a_jx_j} \epsilon$. Note that both $\epsilon$ and $\delta$ are positive.
Now, construct $(x^+, y^+) = (x+\epsilon e_i, y+\delta e_j)$ and $(x^-, y^-) = (x-\epsilon e_i, y-\delta e_j)$. Observe that both $(x^+, y^+)$ and $(x^-, y^-)$ belong to $\mathcal{S}_a$. Moreover, $(x,y) = \frac12 (x^+, y^+) + \frac12 (x^-, y^-)$. But, this is a contradiction to $(x,y)$ being an extreme point of $\mathcal{S}_a$.
\end{enumerate}

\item $y_i = 0$

Let $\epsilon = \{x_i - \underline x_i, \overline x_i - x_i \} $. Note that  $\epsilon$ is positive.
Now, construct $(x^+, y) = (x+\epsilon e_i, y)$ and $(x^-, y) = (x-\epsilon e_i, y)$. Observe that both $(x^+, y)$ and $(x^-, y)$ belong to $\mathcal{S}_a$. Moreover, $(x,y) = \frac12 (x^+, y) + \frac12 (x^-, y)$. But, this is a contradiction to $(x,y)$ being an extreme point of $\mathcal{S}_a$.

\item $x_j = 0$

Let $\delta = \{y_i - \underline y_i, \overline y_i - y_i \} $. Note that  $\delta$ is positive.
Now, construct $(x, y^+) = (x, y+\delta e_j)$ and $(x, y^-) = (x, y-\epsilon e_j)$. Observe that both $(x, y^+)$ and $(x, y^-)$ belong to $\mathcal{S}_a$. Moreover, $(x,y) = \frac12 (x, y^+) + \frac12 (x, y^-)$. But, this is a contradiction to $(x,y)$ being an extreme point of $\mathcal{S}_a$.
\end{enumerate}
\end{proof}

Proposition \ref{either or 1} implies the following corollary.
\begin{cor} \label{and cor}
Let $(x,y)$ be an extreme point of the set $\mathcal{S}_a$.  Then, either $x_i$ and $y_i$ or  $x_j$ and $y_j$ are at their bounds  for a distinct pair of indices  $i$ and $j$.
\end{cor}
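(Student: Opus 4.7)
My plan is to derive the corollary from Proposition \ref{either or 1} in two stages: first a purely Boolean combination of the four clauses that yields a weaker statement, and then a perturbation argument mirroring the one in the proof of Proposition \ref{either or 1} that promotes the weaker statement to the stated conclusion. To keep the bookkeeping clean, I will write $A_k$ for the predicate ``$x_k$ is at one of its bounds'' and $C_k$ for ``$y_k$ is at one of its bounds''. Proposition \ref{either or 1} then reads, for any distinct pair $i,j$, as the four clauses $A_i \vee C_j$, $A_i \vee C_i$, $A_j \vee C_j$, and $A_j \vee C_i$.

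My first step will be to combine these clauses. Conjoining the first two and distributing yields $A_i \vee (C_i \wedge C_j)$; conjoining the last two yields $A_j \vee (C_i \wedge C_j)$; finally, conjoining these two consequences gives $(A_i \wedge A_j) \vee (C_i \wedge C_j)$. In words: either both $x_i$ and $x_j$ sit at bounds, or both $y_i$ and $y_j$ sit at bounds. This is \emph{a priori} strictly weaker than the corollary's conclusion (for instance, it is consistent with $\neg A_i \wedge \neg A_j \wedge C_i \wedge C_j$), so more work is needed.

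My second step will be to upgrade this by ruling out the two offending configurations $A_i \wedge A_j \wedge \neg C_i \wedge \neg C_j$ and $C_i \wedge C_j \wedge \neg A_i \wedge \neg A_j$. To handle the first, I plan to mirror Case 1 of the proof of Proposition \ref{either or 1}: perturb $y_i$ by $\epsilon$ and $y_j$ by $\delta = -(a_i x_i)\epsilon/(a_j x_j)$ to obtain two feasible points whose midpoint is $(x,y)$, contradicting extremality. In the degenerate subcases where $x_i = 0$ or $x_j = 0$ (recall that $a_i, a_j$ are nonzero by hypothesis), the corresponding $y$-coordinate decouples from the quadratic constraint and can be perturbed freely, again violating extremality. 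This will force $A_i \wedge A_j \Rightarrow C_i \vee C_j$, and the symmetric argument (swapping the roles of the $x$'s and $y$'s) will give $C_i \wedge C_j \Rightarrow A_i \vee A_j$. A short four-way case split on which of $C_i, C_j$ or $A_i, A_j$ accompanies the pair from step one then yields $(A_i \wedge C_i) \vee (A_j \wedge C_j)$, which is precisely the corollary.

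The hard part will be the perturbation step: the Boolean implication from the four clauses alone is insufficient, so one has to carefully re-deploy the extremality argument, especially in the degenerate subcases where a coordinate vanishes and the quadratic term associated with it disappears. Once this is in place, the remaining logical combination is routine.
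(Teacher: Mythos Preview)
Your proof is correct, but it takes a longer path than the paper's. The paper dispatches the corollary in one Boolean line:
\[
(A_i \lor C_j)\ \land\ (A_i \lor A_j)\ \land\ (C_i \lor C_j)\ \land\ (C_i \lor A_j)\ \equiv\ (A_i \land C_i)\ \lor\ (A_j \land C_j),
\]
with all four clauses on the left attributed to Proposition~\ref{either or 1}. Notice the difference from your reading: the paper uses the cross-index, same-letter clauses $A_i\lor A_j$ and $C_i\lor C_j$, not the same-index clauses $A_i\lor C_i$ and $A_j\lor C_j$ that you (faithfully to the proposition's stated items (ii) and (iii)) take as input. With the paper's four clauses the identity above is immediate by two applications of distribution, exactly as in your Step~1 factoring but yielding the desired conclusion directly rather than the weaker $(A_i\land A_j)\lor(C_i\land C_j)$.

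Your Step~2 perturbation --- moving $y_i$ by $\epsilon$ and $y_j$ by $-(a_ix_i/a_jx_j)\epsilon$, with the degenerate subcases $x_i=0$ or $x_j=0$ handled by a free single-coordinate perturbation --- is nothing other than a direct proof of the clause $C_i\lor C_j$; the symmetric version gives $A_i\lor A_j$. These are precisely the two clauses the paper invokes from the proposition (and which the proof technique of Proposition~\ref{either or 1} does establish, by perturbing the pairs $(y_i,y_j)$ and $(x_i,x_j)$ respectively). So your ``hard part'' is just rederiving those two instances; once you have them, the case split in your final paragraph is unnecessary and the one-line Boolean identity above finishes the job. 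In substance the two arguments coincide; you simply routed through a weaker intermediate conclusion before recovering the clauses the paper uses outright.
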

\begin{proof}
Let $x_i = \hat x_i$ be a shorthand for ``either $x_i = \underline x_i $ or $x_i = \overline x_i $". Then, Proposition \ref{either or 1} implies that 
\begin{align}
\begin{split}
& (x_i = \hat x_i \lor y_j = \hat y_j) \land (x_i = \hat x_i \lor x_j = \hat x_j) \land (y_i = \hat y_i \lor y_j = \hat y_j) \land (y_i = \hat y_i \lor x_j = \hat x_j)  \\ = &  (x_i = \hat x_i \land y_i = \hat y_i) \lor (x_j = \hat x_j \land y_j = \hat y_j),
\end{split}
\end{align}
which is the desired conclusion.
\end{proof}

%
%\begin{equation}
%\bigwedge_{i \neq j}  (x_i = \hat x_i \land y_i = \hat y_i) \lor (x_j = \hat x_j \land y_j = \hat y_j) 
%\end{equation}
%In words, this means that if   $(x,y) S$ is an extreme point of $S$, then either all variables are at one of their bounds or there exists for  at most  if there exists two distinct indices $i$ and $j$ such that $x_i \neq \hat x_i \lor y_i = \hat y_i) \lor (x_j = \hat x_j \land y_j = \hat y_j) $

An immediate consequence of Corollary \ref{and cor} is the following characterization of extreme points of $\mathcal{S}_a$: 
\begin{cor} 
All the extreme points of $\mathcal{S}_a$ are in one of the following sets:
\begin{itemize}
\item
$D_0 = \{(x,y) \in \mathcal{S}_a: (x_i,y_i) = (\hat x_i, \hat y_i)\ \forall i \}$
\item
$D_k = \{(x,y) \in \mathcal{S}_a: (x_i,y_i) = (\hat x_i, \hat y_i) \ i \neq k, x_ky_k = -\frac{1}{a_k} \sum_{i \neq k} a_i \hat x_i \hat y_i, \ x_k\in[\underline x_k, \overline x_k], \ y_k\in[\underline y_k , \overline y_k] \} \quad k=1,\dots,N$
\end{itemize}
\end{cor}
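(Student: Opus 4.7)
The statement is a direct combinatorial consequence of Corollary~\ref{and cor}, so the plan is to translate the pairwise ``at-bounds'' dichotomy there into a global statement about how many coordinate pairs $(x_k,y_k)$ can fail to be at their bounds at an extreme point.

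Let $(x,y)$ be an extreme point of $\mathcal{S}_a$ and call an index $k$ \emph{free} if $(x_k,y_k)$ is not entirely at its bounds, i.e.\ at least one of $x_k,y_k$ lies strictly between $\underline{x}_k,\overline{x}_k$ or $\underline{y}_k,\overline{y}_k$. The first step is to show that there is at most one free index. Suppose for contradiction that $k_1\neq k_2$ are both free. Apply Corollary~\ref{and cor} to the pair $i=k_1$, $j=k_2$: it asserts that either $(x_{k_1},y_{k_1})$ or $(x_{k_2},y_{k_2})$ must be entirely at their bounds, contradicting the assumption that both $k_1$ and $k_2$ are free. Hence at most one free index exists.

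Now split into two cases. If no index is free, then $(x_i,y_i)=(\hat x_i,\hat y_i)$ for all $i$ and therefore $(x,y)\in D_0$. If exactly one index $k$ is free, then $(x_i,y_i)=(\hat x_i,\hat y_i)$ for all $i\neq k$, and the single remaining equality in the definition of $\mathcal{S}_a$ becomes
\begin{equation*}
a_k x_k y_k \;=\; -\sum_{i\neq k} a_i \hat x_i \hat y_i,
\end{equation*}
which, together with the bound constraints $x_k\in[\underline x_k,\overline x_k]$ and $y_k\in[\underline y_k,\overline y_k]$, places $(x,y)$ in $D_k$. Taking the union over the two cases yields the claimed inclusion $\mathrm{ext}(\mathcal{S}_a)\subseteq D_0\cup\bigcup_{k=1}^N D_k$.

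\textbf{Main obstacle.} There is essentially none of a technical nature: the whole content is extracted by Corollary~\ref{and cor}, which in turn rests on the Case~1--3 perturbation argument of Proposition~\ref{either or 1}. The only point requiring a small amount of care is the bookkeeping of what ``at its bounds'' means when $x_i=0$ or $y_i=0$ triggers Cases~2--3 of Proposition~\ref{either or 1}; one must verify that the perturbation arguments there remain valid for the pair $(i,j)=(k_1,k_2)$ chosen above, so that the contradiction in the first step is genuine. Once this is checked, the corollary follows in a few lines.
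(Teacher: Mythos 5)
Your argument is correct and is exactly the reasoning the paper intends: the paper states this corollary as an ``immediate consequence'' of Corollary~\ref{and cor} without written proof, and your observation that the pairwise dichotomy forces at most one free index, after which the single bilinear equality pins down $x_ky_k$, is precisely that intended deduction.
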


Note that $D_0$ is a collection of at most $4^N$ singletons whereas $D_k$ is a collection of $4^{N-1}$ sets for each $k$. The projection of such a set onto $(x_k,y_k)$ is of the following form
\begin{equation}
T_\alpha = \{(x,y) \in \mathbb{R}^2 : xy = \alpha, \  \ x\in[\underline x, \overline x], \ y\in[\underline y , \overline y] \}
\end{equation}
for some constant $\alpha$.

\begin{prop} 
Set conv$(T_\alpha)$ is second-order cone representable for any value of $\alpha$.
\end{prop}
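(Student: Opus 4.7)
The plan is to perform a case analysis on the sign of $\alpha$ and show that $T_\alpha$ is, in each case, the disjoint union of at most two convex arcs, each with an SOC-representable convex hull; then $\text{conv}(T_\alpha)$ is the convex hull of a finite union of SOC-representable sets, and hence itself SOC-representable via the standard disjunctive lifting.

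For $\alpha = 0$, the set $T_0$ is the intersection of the box with the coordinate axes: depending on whether $0\in[\underline x,\overline x]$ and/or $0\in[\underline y,\overline y]$, it is the union of at most two axis-aligned line segments through the origin. Its convex hull is therefore a polygon with at most four vertices, which is polyhedral and trivially SOC-representable.

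For $\alpha\neq 0$, the curve $xy=\alpha$ has two branches lying in opposite open quadrants (I and III if $\alpha>0$; II and IV if $\alpha<0$), so $T_\alpha$ consists of at most two connected arcs, each contained in a single open quadrant. Focus on one such arc, say in the open first quadrant with $\alpha>0$; the other quadrants reduce to this by flipping signs. On this branch, $y=\alpha/x$ is a strictly convex function of $x$, and one can explicitly determine the endpoints $(x^-,\alpha/x^-)$ and $(x^+,\alpha/x^+)$ with $x^-=\max\{\underline x,\alpha/\overline y\}$ and $x^+=\min\{\overline x,\alpha/\underline y\}$ (provided $x^-\le x^+$; otherwise the arc is empty). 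The convex hull of this arc is the planar region bounded below by the hyperbola and above by the chord between its endpoints:
\begin{equation*}
\left\{(x,y):\ x^-\le x\le x^+,\ xy\ge\alpha,\ y\le L(x)\right\},
\end{equation*}
where $L$ is the affine interpolant between the endpoints. The only nonlinear constraint, $xy\ge\alpha$ with $x,y\ge 0$, is a rotated second-order cone constraint (equivalently $(\sqrt{2\alpha})^2\le 2xy$, $x,y\ge 0$), so the convex hull of a single arc is SOC-representable.

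Finally, $\text{conv}(T_\alpha)$ is the convex hull of the union of at most two SOC-representable convex sets, and such a hull admits the standard lifted SOC representation: introduce scaled copies of the variables and a convex combination multiplier for each piece, and write a perspective (conic hull) formulation of each SOC constraint, as in Ben-Tal and Nemirovski's disjunctive conic programming. The main obstacle is not analytical but organizational, namely the clean enumeration of subcases (sign of $\alpha$, and whether the box meets each relevant quadrant) so that in every subcase the arcs and their endpoints are correctly identified; once this bookkeeping is done, the SOC-representation of each arc and the lifted representation of their union are routine.
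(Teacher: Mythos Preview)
Your proof is correct and follows the same essential idea as the paper's (very terse) argument: a case split on $\alpha$, the observation that $xy \ge \alpha$ (after sign normalisation) is a rotated second-order cone constraint, and linear cuts to cap the region. The paper merely says ``in the most complicated case, we need $xy\ge\alpha$ (which is conic representable) and McCormick envelopes,'' whereas you are more explicit and more careful in two respects. First, you allow the box to meet both branches of the hyperbola and combine the two arc-hulls via a disjunctive perspective lifting; the paper's sketch does not address this case. Second, you describe the linear boundary of a single arc's hull as the chord between its endpoints, which is exactly right; the paper's reference to McCormick envelopes is looser, since when the two arc endpoints lie on \emph{opposite} sides of the box (e.g.\ both on $x=\underline x$ and $x=\overline x$) the McCormick inequalities for $xy\le\alpha$ do not recover the chord and hence do not give the hull exactly. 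None of this affects the truth of the proposition---only the precision of the description---and your version supplies the detail the paper omits.
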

There are several cases based on parameter values. In the most complicated case,  we need $xy \ge \alpha$ (which is conic representable) and McCormick envelopes.
%\begin{proof}
%\end{proof}

Now, we are ready to prove the main result.
\begin{proof} [Proof of Theorem \ref{main theorem 1}]
Since the convex hull of all the  disjunctions are second-order cone representable (could be polyhedral or singleton depending on parameter values), conv$(\mathcal{S}_a)$ is also second-order cone representable.
\end{proof}

\bibliographystyle{plain}
\bibliography{references}

\begin{thebibliography}{10}

\bibitem{Bai2009}
X.~Bai and H.~Wei.
\newblock Semi-definite programming-based method for security-constrained unit
  commitment with operational and optimal power flow constraints.
\newblock {\em IET Generation Transmission \& Distribution}, 3(2):182--197,
  2009.

\bibitem{Bai2008}
X.~Bai, H.~Wei, K.~Fujisawa, and Y.~Wang.
\newblock Semidefinite programming for optimal power flow problems.
\newblock {\em Electric Power and Energy Systems}, 30:383--392, 2008.

\bibitem{bienstock2014}
D.~Bienstock and G.~Munoz.
\newblock On linear relaxations of {OPF} problems.
\newblock {\em arXiv preprint arXiv:1411.1120}, 2014.

\bibitem{bose2012quadratically}
Subhonmesh Bose, Dennice~F Gayme, K~Mani Chandy, and Steven~H Low.
\newblock Quadratically constrained quadratic programs on acyclic graphs with
  application to power flow.
\newblock {\em arXiv preprint arXiv:1203.5599}, 2012.

\bibitem{bose2011optimal}
Subhonmesh Bose, Dennice~F Gayme, Steven Low, and K~Mani Chandy.
\newblock Optimal power flow over tree networks.
\newblock In {\em {49th Annual Allerton Conference on Communication, Control,
  and Computing (Allerton)}}, pages 1342--1348, 2011.

\bibitem{Bukhsh}
Waqquas~A. Bukhsh, Andreas Grothey, Ken McKinnon, and Paul Trodden.
\newblock Local solutions of optimal power flow.
\newblock {\em IEEE Transactions on Power Systems}, 28(4):4780--4788, 2013.

\bibitem{FERC2012}
M.~B. Cain, R.~P. O'Neill, and A.~Castillo.
\newblock History of optimal power flow and formulations.
\newblock
  http://www.ferc.gov/industries/electric/indus-act/market-planning/opf-papers/acopf-1-history-formulation-testing.pdf,
  2012.

\bibitem{Carpentier}
J.~Carpentier.
\newblock Contributions to the economic dispatch problem.
\newblock {\em Bulletin Society Francaise Electriciens}, 8(3):431--447, 1962.

\bibitem{Chen2015IEEE}
C.~Chen, A.~Atamt\"{u}rk, and S.~S. Oren.
\newblock Bound tightening for the alternating current optimal power flow
  problem.
\newblock {\em {IEEE} Transactions on Power Systems}, PP(99):1--8, 2015.

\bibitem{Chen2015Report}
C.~Chen, A.~Atamt\"{u}rk, and S.~S. Oren.
\newblock A spatial branch-and-cut algorithm for nonconvex {QCQP} with bounded
  complex variable.
\newblock Bcol research report 15.04, University of California, Berkeley, 2015.

\bibitem{nesta}
C.~Coffrin, D.~Gordon, and P.~Scott.
\newblock {NESTA}, {T}he {NICTA} energy system test case archive.
\newblock {\em arXiv preprint arXiv:1411.0359}, 2014.

\bibitem{coffrin2015}
C.~Coffrin, H.~L. Hijazi, and P.~Van Hentenryck.
\newblock The {QC} relaxation: Theoretical and computational results on optimal
  power flow.
\newblock {\em arXiv preprint arXiv:1502.07847}, 2015.

\bibitem{coffrin2014}
C.~Coffrin and P.~Van~Hentenryck.
\newblock A linear-programming approximation of {AC} power flows.
\newblock {\em INFORMS Journal on Computing}, 26(4):718--734, 2014.

\bibitem{DeyGu}
Santanu~S. Dey and Akshay Gupte.
\newblock Analysis of {MILP} techniques for the pooling problem.
\newblock {\em Operations Research}, 63(2):412--427, 2015.

\bibitem{Frank12012}
S.~Frank, I.~Steponavice, and S.~Rebennack.
\newblock Optimal power flow: A bibliographic survey {I} -- formulations and
  deterministic methods.
\newblock {\em Energy Systems}, 3(3):221--258, 2012.

\bibitem{Frank22012}
S.~Frank, I.~Steponavice, and S.~Rebennack.
\newblock Optimal power flow: A bibliographic survey {II} -- nondeterministic
  and hybrid methods.
\newblock {\em Energy Systems}, 3(3):259--289, 2012.

\bibitem{fukuda2001}
M.~Fukuda, M.~Kojima, K.~Murota, and K.~Nakata.
\newblock Exploiting sparsity in semidefinite programming via matrix completion
  {I}: General framework.
\newblock {\em SIAM Journal on Optimization}, 11(3):647--674, 2001.

\bibitem{GupteAhDeCh}
Akshay Gupte, Shabbir Ahmed, Santanu~S. Dey, and Myun{-}Seok Cheon.
\newblock Relaxations and discretizations for the pooling problem.
\newblock {\em J. Global Optimization}, 67(3):631--669, 2017.

\bibitem{Hijazi2015SDP}
Hassan Hijazi, Carleton Coffrin, and Pascal~Van Hentenryck.
\newblock Polynomial sdp cuts for optimal power flow.
\newblock {\em arXiv preprint arXiv:1510.08107}, 2015.

\bibitem{hijazi2013}
HL~Hijazi, C~Coffrin, and P~Van~Hentenryck.
\newblock Convex quadratic relaxations of mixed-integer nonlinear programs in
  power systems.
\newblock Technical report, NICTA, Canberra, ACT Australia, 2013.

\bibitem{hillestad1980}
Richard~J Hillestad and Stephen~E Jacobsen.
\newblock Linear programs with an additional reverse convex constraint.
\newblock {\em Applied Mathematics and Optimization}, 6(1):257--269, 1980.

\bibitem{Horn}
R.~A. Horn and C.~R. Johnson.
\newblock {\em Matrix Analysis}.
\newblock Cambridge University Press, 2 edition, 2013.

\bibitem{jabr2012}
R.~A. Jabr.
\newblock Exploiting sparsity in {SDP} relaxations of the {OPF} problem.
\newblock {\em IEEE Transactions on Power Systems}, 27(2):1138--1139, 2012.

\bibitem{Jabr2002}
R.~A. Jabr, A.~H. Coonick, and B.~J. Cory.
\newblock A primal-dual interior point method for optimal power flow
  dispatching.
\newblock {\em IEEE Transactions on Power Systems}, 17(3):654--662, 2002.

\bibitem{Jabr06}
Rabih~A. Jabr.
\newblock Radial distribution load flow using conic programming.
\newblock {\em {IEEE} Transactions on Power Systems}, 21(3):1458--1459, 2006.

\bibitem{Jabr08}
Rabih~A. Jabr.
\newblock Optimal power flow using an extended conic quadratic formulation.
\newblock {\em {IEEE} Transactions on Power Systems}, 23(3):1000--1008, 2008.

\bibitem{kocuk2015}
B.~Kocuk, S.~S. Dey, and X.~A. Sun.
\newblock Strong {SOCP} relaxations for the optimal power flow problem.
\newblock {\em Operations Research}, 64(6):1176--1196, 2016.

\bibitem{kocuk2014}
B.~Kocuk, S.S. Dey, and X.A. Sun.
\newblock Inexactness of {SDP} relaxation and valid inequalities for optimal
  power flow.
\newblock {\em IEEE Transactions on Power Systems}, 31(1):642--651, Jan 2016.

\bibitem{Lavaei12}
Javad Lavaei and Steven~H. Low.
\newblock Zero duality gap in optimal power flow problem.
\newblock {\em {IEEE} Transactions on Power Systems}, 27(1):92--107, 2012.

\bibitem{OPFSolver}
R.~Madani, M.~Ashraphijuo, and J.~Lavaei.
\newblock {OPF Solver Guide}, 2014.
\newblock \url{http://ieor.berkeley.edu/~lavaei/Software.html}.

\bibitem{madani2014}
R.~Madani, M.~Ashraphijuo, and J.~Lavaei.
\newblock Promises of conic relaxation for contingency-constrained optimal
  power flow problem.
\newblock Allerton, 2014.

\bibitem{madani2015}
R.~Madani, S.~Sojoudi, and J.~Lavaei.
\newblock Convex relaxation for optimal power flow problem: Mesh networks.
\newblock {\em IEEE Transactions on Power Systems}, 30(1):199--211, Jan 2015.

\bibitem{Madani}
Ramtin Madani, Somayeh Sojoudi, and Javad Lavaei.
\newblock Convex relaxation for optimal power flow problem: Mesh networks.
\newblock In {\em ACSSC}, pages 1375--1382, 2013.

\bibitem{mccormick}
Garth~P McCormick.
\newblock Computability of global solutions to factorable nonconvex programs:
  Part {I} -- convex underestimating problems.
\newblock {\em Mathematical Programming}, 10(1):147--175, 1976.

\bibitem{Misener}
Ruth Misener, Jeffrey~P. Thompson, and Christodoulos~A. Floudas.
\newblock Apogee: Global optimization of standard, generalized, and extended
  pooling problems via linear and logarithmic partitioning schemes.
\newblock {\em Computers \& Chemical Engineering}, 35(5):876 -- 892, 2011.

\bibitem{molzahn2013}
D.~K. Molzahn, J.~T. Holzer, B.~C. Lesieutre, and C.~L DeMarco.
\newblock Implementation of a large-scale optimal power flow solver based on
  semidefinite programming.
\newblock {\em IEEE Transactions on Power Systems}, 28(4):3987--3998, 2013.

\bibitem{molzahn2014}
D.K. Molzahn and I.A. Hiskens.
\newblock Sparsity-exploiting moment-based relaxations of the optimal power
  flow problem.
\newblock {\em To appear in IEEE Transactions on Power Systems}, 2015.

\bibitem{Momoh11999}
J.~A. Momoh, M.~E. El-Hawary, and R.~Adapa.
\newblock A review of selected optimal power flow literature to 1993 part {I}:
  Nonlinear and quadratic programming approaches.
\newblock {\em IEEE Transactions on Power Systems}, 14(1):96--104, 1999.

\bibitem{Momoh21999}
J.~A. Momoh, M.~E. El-Hawary, and R.~Adapa.
\newblock A review of selected optimal power flow literature to 1993 part {II}:
  Newton, linear programming and interior point methods.
\newblock {\em IEEE Transactions on Power Systems}, 14(1):105--111, 1999.

\bibitem{MOSEK}
MOSEK.
\newblock {\em MOSEK Modeling Manual}.
\newblock MOSEK ApS, 2013.

\bibitem{nakata2003}
K.~Nakata, K.~Fujisawa, M.~Fukuda, M.~Kojima, and K.~Murota.
\newblock Exploiting sparsity in semidefinite programming via matrix completion
  {II}: Implementation and numerical results.
\newblock {\em Mathematical Programming}, 95(2):303--327, 2003.

\bibitem{Nesterov2000}
Y.~Nesterov, H.~Wolkowicz, and Y.~Ye.
\newblock {\em Handbook of semidefinite programming}, volume~27 of {\em
  Internat. Ser. Oper. Res. Management Sci.}, chapter Semidefinite programming
  relaxations of nonconvex quadratic optimization, pages 361--419.
\newblock Kluwer Acad. Publ., 2000.

\bibitem{Phan}
Dzung~T. Phan.
\newblock Lagrangian duality and branch-and-bound algorithms for optimal power
  flow.
\newblock {\em Operations Research}, 60(2):275--285, 2012.

\bibitem{sojoudi2012physics}
Somayeh Sojoudi and Javad Lavaei.
\newblock Physics of power networks makes hard optimization problems easy to
  solve.
\newblock In {\em {IEEE Power and Energy Society General Meeting}}, pages 1--8,
  2012.

\bibitem{ts:05}
M.~Tawarmalani and N.~V. Sahinidis.
\newblock {A polyhedral branch-and-cut approach to global optimization}.
\newblock {\em Mathematical Programming}, 103:225--249, 2005.

\bibitem{BARON}
M.~Tawarmalani and N.~V. Sahinidis.
\newblock A polyhedral branch-and-cut approach to global optimization.
\newblock {\em Mathematical Programming}, 103(2):225--249, 2005.

\bibitem{Tawarmalani2016}
Mohit Tawarmalani and Jean-Philippe~P. Richard.
\newblock Decomposition techniques in convexification of inequalities.
\newblock Technical report, 2013.
\newblock Working paper.

\bibitem{taylorbook}
J.A. Taylor.
\newblock {\em Convex Optimization of Power Systems}.
\newblock Cambridge University Press, Cambridge, UK, 2015.

\bibitem{Torres1998}
G.~L. Torres and V.~H. Quintana.
\newblock An interior-point method for nonlinear optimal power flow using
  voltage rectangular coordinates.
\newblock {\em IEEE Transactions on Power Systems}, 13(4):1211--1218, 1998.

\bibitem{Zimmerman2007}
H.~Wang, C.~E. Murillo-S\'anchez, R.~D. Zimmerman, and R.~J. Thomas.
\newblock On computational issues of market based optimal power flow.
\newblock {\em IEEE Transactions on Power Systems}, 22(3):1185--1193, 2007.

\bibitem{Wu1994}
Y.~Wu, A.~S. Debs, and R.~E. Marsten.
\newblock A direct nonlinear predictor-corrector primal-dual interior point
  algorithm for optimal power flows.
\newblock {\em IEEE Transactions on Power Systems}, 9(2):876--883, 1994.

\bibitem{Zhang12}
Baosen Zhang and D.~Tse.
\newblock Geometry of feasible injection region of power networks.
\newblock In {\em Communication, Control, and Computing (Allerton), 2011 49th
  Annual Allerton Conference on}, pages 1508--1515, Sept 2011.

\bibitem{Matpower}
R.D. Zimmerman, C.E. Murillo-Sanchez, and R.J. Thomas.
\newblock {MATPOWER}: Steady-state operations, planning, and analysis tools for
  power systems research and education.
\newblock {\em {IEEE} Transactions on Power Systems}, 26(1):12--19, Feb 2011.

\end{thebibliography}

\end{document}